\newtheorem{thm}{Theorem}
\newtheorem{lem}{Lemma}[section]
\newtheorem{Prop}{Proposition}[section]
\newtheorem{Prop*}{Proposition}
\newtheorem*{theorem*}{Theorem}
\newtheorem*{corollary*}{Corollary}
\newtheorem*{oss*}{Remark}
\theoremstyle{definition}
\newtheorem{oss}{Remark}[section]
\newcommand{\R}{\mathbb{R}}
\newcommand{\N}{\mathbb{N}}
\newcommand\res{\mathop{\hbox{\vrule height 7pt width .5pt depth 0pt
\vrule height .5pt width 6pt depth 0pt}}\nolimits}
\DeclareMathAlphabet{\mathscr}{OT1}{pzc}{m}{it}
\begin{document} 
 
\title{\textbf{Smooth approximations for constant-mean-curvature hypersurfaces with isolated singularities}} 
\author{Costante Bellettini\footnote{Department of Mathematics, University College London, United Kingdom.} \quad  \quad Konstantinos Leskas\footnote{Department of Mathematics, National and Kapodistrian University of Athens, Greece.}}
\date{}

\maketitle

\begin{abstract}
We consider a CMC hypersurface with an isolated singular point at which the tangent cone is regular, and such that, in a neighbourhood of said point, the hypersurface is the boundary of a Caccioppoli set that minimises the standard prescribed-mean-curvature functional. We prove that in a ball centred at the singularity there exists a sequence of smooth CMC hypersurfaces, with the same prescribed mean curvature, that converge to the given one. Moreover, these hypersurfaces arise as boundaries of minimisers. In ambient dimension $8$ the condition on the cone is redundant. (When the mean curvature vanishes identically, the result is the well-known Hardt--Simon approximation theorem.) 
\end{abstract}

\section{Introduction}

It is well known that variational constructions for area-type functionals may lead to singularity formation. Already in the widely studied case of area minimisation for hypersurfaces, if the ambient dimension is $8$ or higher, solutions cannot be expected to be completely smooth. The case of volume-constrained perimeter minimisation, which leads to isoperimetric regions, is analogous: in $\R^{n+1}$, or more generally in an $(n+1)$-dimensional Riemannian manifold, such regions have boundaries that are smoothly embedded away from a possible singular set of dimension at most $(n-7)$; when $n=7$ the singular set is made more precisely of isolated points. The phenomenon arises yet again in the case of minimax constructions for prescribed-mean-curvature functionals.

Examples show that this singular set is in general unavoidable. The well-known minimal cone $C_{4,4}=\{(x,y) \in \R^4 \times \R^4 \equiv \R^8: |x|^2=|y|^2\}$ (shown to be stable by Simons \cite{Simons}), is smooth away from the isolated singularity at the origin, and is area-minimising, e.g.~in any ball $B \subset \R^8$, with respect to the boundary condition $C_{4,4} \cap \partial B$. This was proved by Bombieri--De Giorgi--Giusti (\cite{BDG}, see also a more straightforward proof in \cite{DP}). This cone is in fact the unique minimiser for said boundary condition.
An isoperimetric region with two isolated singular points in an $8$-dimensional Riemannian manifold was recently constructed in \cite{Niu}.

On the other hand, it is fruitful to ask whether the appearance of singularities is a generic phenomenon. This question led to very important progress already in the 80s and has received renewed attention in recent years. The fundamental work by Hardt--Simon \cite{HS} shows an instance of generic regularity for solutions to the Plateau problem, in the following sense. Let a $7$-dimensional area minimiser in $\R^{8}$ be given, with (prescribed) $6$-dimensional smooth boundary $\Gamma$, and with an isolated singular point; then a slight perturbation of $\Gamma$ yields a minimiser that is completely smooth. This type of result lends itself to geometric applications, by shifting the genericity condition onto the Riemannian metric, as exemplified by Smale's proof of generic regularity of area-minimisers in any non-zero homology class \cite{Smale}. (We also refer the reader to \cite{CLS, LW}.) Very recently, the question of generic regularity for area minimisers has found affirmative answer in ambient dimension $9$ and $10$, in the work by Chodosh--Mantoulidis--Schulze \cite{CMS}, making progress on a long-standing conjecture (\cite{Smale})\footnote{After the appearance of this article, dimension $11$ was also addressed, see \cite{CMSW}.}. We also refer the reader to \cite{CCMS} and references therein, for generic regularity in the setting of mean curvature flow.

\medskip

Our main goal here is to prove a (local) smooth approximation result in the constant-mean-curvature (CMC) case, establishing a generic regularity result for the CMC Plateau problem analogous to the one proven in \cite{HS} (in particular, if the mean curvature vanishes identically, the Hardt--Simon theorem gives the result).
The variational setting for CMC hypersurfaces involves an energy that we will denote by $J_\lambda$, where $\lambda \in \R$ is the prescribed constant value of the scalar mean curvature. Roughly speaking, $J_\lambda$ evaluates the $n$-dimensional area of the hypersurface, from which it subtracts $\lambda$ times the $(n+1)$-volume enclosed by it. A natural way to formalise this is by working with boundaries of sets with finite perimeter. We briefly recall the relevant notions (with more details in Section \ref{variational_setup} below).

Let $E \subset U$ be a set with locally finite perimeter in a bounded open set $U \subset \R^{n+1}$, and let $\lambda \in \R$. We denote by $J_\lambda$ the functional (defined on any set $D \subset U$ with locally finite perimeter in $U$),
\[J_\lambda (D) = \text{Per}_{U}(D) -\lambda |D|,\]
where the notation $|D|$ stands for $\mathcal{L}^{n+1}(D)$. Given $W \subset \subset U$, the set $E$ is said to be a minimiser of $J_\lambda$ in $W  \subset U$ if it attains the following infimum:
\[\inf \{J_\lambda(D): D  \cap (U \setminus W) = E  \cap (U \setminus W)   \}.\]
In other words, the class of competitors for $E$ is that of sets (with locally finite perimeter in $U$) that coincide with $E$ outside $W$. Equalities and inclusions between sets of locally finite perimeter are always understood to hold in the $\mathcal{L}^{n+1}$-a.e.~sense. Prescribing the set in $U \setminus W$ amounts to fixing the boundary condition for the Plateau problem in $W$ (as customary in the setting of Caccioppoli sets).

If $E$ is a minimiser of $J_\lambda$ in $W \subset \subset U$, it is well-known (see e.g.~\cite{Mag, GMT, BeWi1}) that 
there exists a set $\Sigma \subset W$ with $\text{dim}_{\mathcal{H}}(\Sigma) \leq n-7$, such that $(\overline {\partial^* E} \cap W^\prime) \setminus \Sigma$ is smoothly embedded in $W'$ for every open set $W' \subset \subset W$, and that $(\overline {\partial^* E} \cap W^\prime) \setminus \Sigma$ has constant scalar mean curvature equal to $\lambda$. Here $\partial^* E$ denotes the reduced boundary of the set $E$. (More precisely, the mean curvature vector is $\lambda \nu_E$, where $\nu_E$ is the unit normal pointing into $E$.)

\medskip

The most immediate instance of our result states the following. 

\begin{thm}
\label{thm:main_dim8}
Let $E$ be a set with locally finite perimeter in an open set $U\subset \R^8$, and assume that $E$ minimises $J_\lambda$ in a ball $\hat{B} \subset \subset U$, for a given $\lambda \in \R$. There exists a ball $B \subset \hat{B}$, with the same centre, and a sequence of hypersurfaces $T_j$ smoothly embedded in $B$, with scalar mean curvature $\lambda$, and with $T_j \to \partial^* E$ in $B$. (The convergence holds in the sense of currents, in the sense of varifolds, as well as in the Hausdorff distance sense.) Moreover, $T_j = \partial^* E_j$, where each $E_j$ is a set with finite perimeter in $B$ and $\partial^* E_j$ stands for the reduced boundary of $E_j$ in $B$, and we have $E_j \subset E$ and $E_j \to E$ in $B$.
\end{thm}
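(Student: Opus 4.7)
The plan is to realise the smooth approximants $T_j$ as leaves of a CMC foliation built near the singular point, extending the Hardt--Simon foliation theorem for area-minimising cones to the case of prescribed mean curvature $\lambda \neq 0$.

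Translate so that the isolated singular point lies at $0$. Any tangent cone $C$ to $\partial E$ at $0$ is a varifold/subsequential limit of the rescalings $r^{-1}(\partial E)$, which are reduced boundaries of $J_{r\lambda}$-minimisers; since $r\lambda \to 0$, $C$ is a stable, area-minimising hypercone with isolated vertex singularity. In ambient dimension $8$, the isolated-singularity hypothesis combined with the classification of singular stable area-minimising hypercones forces $C$ to be smooth away from $0$ (this is the ``redundancy'' clause in the statement). Standard $\varepsilon$-regularity for $J_\lambda$-minimisers then yields uniqueness of $C$ and smooth graphical convergence $r^{-1}(\partial E) \to C$ on every annulus, for $r$ small.

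The central step is a CMC analogue of the Hardt--Simon foliation theorem. In a small ball $B = B_{\rho_0}(0)$ I construct a smooth one-parameter family $\{L_s\}_{s \in (0, s_0)}$ of hypersurfaces of constant scalar mean curvature $\lambda$, foliating a punctured one-sided neighbourhood of $C$ on the interior side of $\partial E$, with $L_s \to C$ as $s \to 0$ and $L_s$ approaching $\partial E$ as $s \to s_0$. Parametrising leaves as normal graphs $u$ over the classical minimal Hardt--Simon foliation of $C$, the CMC equation for $u$ takes the schematic form
\[
\mathcal{L}_C u \;=\; \lambda + Q(u, \nabla u, \nabla^2 u),
\]
with $\mathcal{L}_C$ the Jacobi operator and $Q$ the quadratic nonlinearity; strict stability and strict area-minimality of $C$ yield invertibility of $\mathcal{L}_C$ in suitable weighted Hölder spaces with estimates uniform down to the tip, and a Banach fixed-point scheme then produces $\{L_s\}$. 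This is the principal technical work: with $\lambda \neq 0$ the CMC equation is not scale-invariant, so the family cannot be obtained by homothety from a single solution, as it is in the minimal Hardt--Simon case, and must be built by genuinely nonlinear elliptic analysis uniformly down to the tip.

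Picking $s_j \downarrow 0$ and letting $E_j$ be the open region in $B$ on the interior side of $L_{s_j}$, I obtain $T_j := \partial^* E_j = L_{s_j}$, a smoothly embedded hypersurface of constant mean curvature $\lambda$. By construction $E_j \subset E$, and the convergences $E_j \to E$ and $T_j \to \partial^* E$ in the Hausdorff, current, and varifold senses follow from $L_{s_j} \to C$ combined with the smooth asymptotic convergence of $\partial E$ to $C$. Finally, $E_j$ is a $J_\lambda$-minimiser by the foliation-as-calibration argument: the unit normal field $\nu$ to the leaves satisfies $\operatorname{div} \nu = \lambda$ throughout the foliated region, whence the divergence theorem gives $J_\lambda(E_j) \leq J_\lambda(D)$ for every competitor $D$ with $D \triangle E_j$ compactly supported in $B$ (extending the foliation across $\partial E$ on the opposite side by the symmetric construction ensures the calibration field covers all of $B$). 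The main obstacle throughout is the CMC foliation construction.
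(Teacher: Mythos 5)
Your proposal takes a genuinely different route from the paper: you try to construct a CMC analogue of the Hardt--Simon foliation near the cone and then take $T_j$ to be leaves of it, certifying minimality by calibration. The paper instead never constructs a CMC foliation. It perturbs the prescribed boundary data slightly inward (via the flow of a vector field supported in an annulus), solves the CMC Plateau problem for each perturbed boundary datum to get a sequence $E_j$ of minimisers (Section \ref{variational_setup}), proves $E_j \subset E$ and $E_j \to E$ by a cut-and-paste argument plus uniqueness of $E$, and then establishes smoothness of $\partial E_j$ near $0$ by a blow-up contradiction: after dilating by $1/|p_j|$ (where $p_j$ is a would-be singular point, shown to be nonzero by the singular maximum principle, Proposition \ref{Prop:sing_max_princ}), the rescaled $E_j$ converge to a perimeter minimiser whose boundary, by the Hardt--Simon classification, is either the cone or one of its \emph{minimal} foliation leaves, both of which are regular at unit distance. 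This uses the $\lambda=0$ Hardt--Simon result as a black box at blow-up scale; no $\lambda\neq 0$ foliation is ever needed.

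The central step of your argument --- "a Banach fixed-point scheme then produces $\{L_s\}$" --- is precisely where the genuine difficulty sits, and it is not addressed. A CMC foliation of a one-sided neighbourhood of a singular cone, \emph{uniform down to the tip}, is a substantial open-ended piece of nonlinear analysis, not a routine consequence of invertibility of $\mathcal{L}_C$. Specifically: (i) you invoke \emph{strict} stability and \emph{strict} area-minimality of $C$, but the hypotheses only give stability and minimality (one may have $\lambda_1=-(n-2)^2/4$, i.e.\ $\gamma^+=\gamma^-$, or $C$ a limit of competing minimisers), so the claimed invertibility in weighted spaces with uniform estimates is not guaranteed; (ii) the Hardt--Simon leaves are themselves only known to exist by a variational (Plateau-problem) construction, not a fixed-point scheme, so even the "base point" of your perturbation is not sitting in the right function-space framework; (iii) the description of the foliated region is internally inconsistent --- you say the leaves interpolate between $C$ and $\partial E$, but both pass through $0$ and coincide to first order there, so the region "between them" degenerates; (iv) the "symmetric extension across $\partial E$" needed for the calibration is asserted, not constructed, and the sign of $\operatorname{div}\nu$ on the far side requires care (the leaves there would have to be CMC with mean curvature vector pointing the \emph{opposite} way across $\partial E$ for the divergence to remain $\lambda$). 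Until the foliation is actually built, the proof is circular in the sense that the hard content has been moved into an unproven auxiliary theorem that is at least as difficult as the statement itself; the paper's perturb-and-blow-up strategy avoids this entirely.
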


We remark that the significance of Theorem \ref{thm:main_dim8} lies in the fact that the centre $p$ of $\hat{B}$ may be a singular point of $\overline{\partial^* E}$.

In ambient dimension $8$, as in Theorem \ref{thm:main_dim8}, isolated singular points are the only type of interior singularities that $\overline{\partial^* E}$ may possess. This is no longer the case when the ambient dimension is higher. Just as in \cite{HS}, we can remove the dimensional restriction in Theorem \ref{thm:main_dim8} by (strongly) restricting the singular behaviour of $E$ (Theorem \ref{thm:main_gendim} below). We work in a neighbourhood of an isolated (interior) singular point $p$ of $\overline{\partial^* E}$, with the further property that the multiplicity-$1$ varifold associated to $\partial^* E$, denoted by $|\partial^* E|$, admits a tangent cone at $p$ that is regular. We recall that a cone is regular when it is smooth away from the vertex, and the multiplicity is $1$ on the smooth part.

\begin{thm}
\label{thm:main_gendim}
Let $E$ be a set with locally finite perimeter in an open set $U\subset \R^{n+1}$, with $n\geq 7$, and assume that $E$ minimises $J_\lambda$ in a ball $\hat{B} \subset \subset U$, for a given $\lambda \in \R$. Assume furthermore that the centre $p$ of $\hat{B}$ is an isolated singularity of $|\partial^* E|$ and that $|\partial^* E|$ admits a tangent cone at $p$ that is regular (in the sense of varifolds). 

There exists a ball $B \subset \hat{B}$, with the same centre $p$, and a sequence of hypersurfaces $T_j$ smoothly embedded in $B$, with scalar mean curvature $\lambda$, and with $T_j \to \partial^* E$ in $B$. (The convergence holds in the sense of currents, in the sense of varifolds, as well as in the Hausdorff distance sense.) Moreover, $T_j = \partial^* E_j$, where each $E_j$ is a set with finite perimeter in $B$ and $\partial^* E_j$ stands for the reduced boundary of $E_j$ in $B$, and we have $E_j \subset E$ and $E_j \to E$ in $B$.
\end{thm}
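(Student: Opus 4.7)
The plan is to adapt the Hardt--Simon \cite{HS} scheme to the prescribed-mean-curvature setting. Three tasks must be accomplished: (i) producing a CMC foliation of a punctured neighbourhood of $p$ with scalar mean curvature exactly $\lambda$; (ii) setting up an obstacle-type minimisation problem whose solutions $E_j$ converge to $E$; (iii) proving the smoothness of each $\partial^* E_j$.

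First I would observe that the tangent cone $C$ of $|\partial^* E|$ at $p$ is area-minimising: under the blow-ups $\eta_{p,r\#}$, the volume term $-\lambda|D|$ scales with an extra factor of $r$ compared with the perimeter, so it drops out in the limit $r\to 0$. By the hypothesis, $C$ is regular and of multiplicity one; the Hardt--Simon foliation theorem applied to $C$ produces smooth, strictly stable, area-minimising leaves $\{\Sigma_s\}_{s\in\R\setminus\{0\}}$ with $\Sigma_s=s\,\Sigma_1$ foliating $\R^{n+1}\setminus C$ on the two sides of $C$. I would then upgrade this to a CMC foliation: for each sufficiently small $|s|$, construct $\Sigma_s^\lambda$ as the normal graph of a function $u_s$ over $\Sigma_s$ solving $H=\lambda$ via the implicit function theorem, with the linearisation being the Jacobi operator of $\Sigma_s$ (coercive by strict stability). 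Uniform-in-$s$ estimates would use the self-similarity $\Sigma_s=s\Sigma_1$: the rescaling $x\mapsto x/s$ reduces the equation to a perturbation on the fixed leaf $\Sigma_1$ with effective prescribed curvature $\lambda s\to 0$. This yields a smooth CMC foliation $\{\Sigma_s^\lambda\}$ of $B_\rho(p)\setminus\{p\}$ on each side of $\partial^* E$, for a suitably small $\rho>0$.

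With the foliation at hand, I would fix a ball $B\subset\hat B$ centred at $p$, pick $s_j\to 0$ with $\Sigma_{s_j}^\lambda$ lying strictly inside $E$ near $p$, and build $F_j$ by gluing the set bounded by $\Sigma_{s_j}^\lambda$ near $p$ to $E$ outside, so that $F_j\subset E$ and $F_j\to E$. Define $E_j$ as a minimiser of the one-sided obstacle problem
\[\min\{J_\lambda(D):D\subset E,\ D=F_j\ \text{outside}\ B\},\]
whose existence follows from the direct method. Since $E$ itself is admissible for this problem, $J_\lambda(E_j)\le J_\lambda(E)$; conversely, extending $E_j$ by $F_j=E$ outside $\hat B$ yields a competitor for $E$ in its own minimisation, giving $J_\lambda(E_j)\ge J_\lambda(E)$. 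Hence each $E_j$ is itself $J_\lambda$-minimising in $\hat B$, and BV-compactness with $J_\lambda$-lower semicontinuity gives a subsequential limit $E_\infty$ with $E_\infty\subset E$, $E_\infty=E$ outside $B$, and $J_\lambda(E_\infty)=J_\lambda(E)$. The CMC foliation (which sweeps $E\setminus F_j$ monotonically as $s_j\to 0$) then pins down $E_\infty=E$ via a calibration/maximum-principle argument. Smoothness of $T_j=\partial^* E_j$ in $B$ follows by a barrier argument in the spirit of \cite{HS}: the leaf $\Sigma_{s_j}^\lambda$ separates $T_j$ from $p$, and the strong maximum principle for CMC hypersurfaces of equal prescribed curvature $\lambda$ prevents $T_j$ from touching either $\Sigma_{s_j}^\lambda$ or, via a foliation-sliding argument applied to any hypothetical singular tangent cone of $T_j$, from developing a singularity of its own.

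The main obstacle is the CMC-foliation construction in the second paragraph: the $\lambda$-term breaks the dilation invariance enjoyed in the minimal case, so the uniform-in-$s$ perturbation estimates must be obtained through weighted-H\"older analysis on the non-compact leaves $\Sigma_s$, carefully absorbing the non-scale-invariant $\lambda$-contribution. A secondary difficulty is the regularity theory for the one-sided obstacle problem with obstacle $\partial^* E$ (which itself has an isolated singularity at $p$), which requires adapting the known regularity results for perimeter-type functionals to the prescribed-mean-curvature setting and verifying that $T_j$ does not accumulate mass on $\partial^* E\setminus\{p\}$ in a way incompatible with the strong maximum principle.
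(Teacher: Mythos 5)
Your route is genuinely different from the paper's, and the differences matter. The paper never constructs a CMC foliation: it takes the one-sided vector field $\nu_E$ on $\partial E$, extends it smoothly and pushes $E$ inward through its flow $\phi_{t_j}$, then solves the \emph{unconstrained} $J_\lambda$-Plateau problem with boundary data $\phi_{t_j}(E)$ on an outer annulus. The containment $E_j\subset E$ is derived a posteriori from the standard union/intersection energy inequality plus uniqueness of $E$ (Lemma \ref{lem:unique_minimiser}), rather than being imposed as a constraint; and the Hardt--Simon foliation enters only at the blow-up level as a classification device ([HS, Thm.~2.1]), not as a family of barriers. The key input that you are missing and the paper supplies is Proposition \ref{Prop:sing_max_princ}, a singular strong maximum principle for $J_\lambda$-minimisers, which shows $0\notin\overline{\partial^*E_j}$ and hence that any would-be singular point $p_j$ of $\partial E_j$ satisfies $p_j\neq 0$; the dilation $x\mapsto x/|p_j|$ then produces a contradiction with the smoothness of the limiting Hardt--Simon leaf or cone.

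Two concrete gaps in your version. First, by posing a one-sided obstacle problem $\{D\subset E\}$ you only obtain the Euler--Lagrange equation as a variational inequality on the contact set $\partial^*E_j\cap\partial^*E$; to conclude that $T_j$ actually has scalar mean curvature $\lambda$ you must show the contact set is empty inside $B$, and since $\partial E$ is singular at $p$ this is precisely the singular maximum principle that the paper proves separately --- you cannot invoke the classical strong maximum principle at $p$, and your phrase ``foliation-sliding applied to any hypothetical singular tangent cone'' does not supply the needed Jacobi-field/decay argument (which the paper carries out using \cite{Sim_unique_tan} and \cite{CaHaSi}). Second, the CMC foliation construction, while believable, is not a direct IFT application: the leaves $\Sigma_s$ are complete noncompact hypersurfaces asymptotic to $C$, so invertibility of the Jacobi operator requires Fredholm theory in weighted H\"older spaces with control of indicial roots, and one must additionally verify that the perturbed graphs $\Sigma_s^\lambda$ remain disjoint and sweep out a punctured neighbourhood of $p$ monotonically. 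These are substantial items you flag as ``the main obstacle'' without resolving, and the paper's perturbation-by-flow-of-$\nu_E$ sidesteps them entirely. The payoff of your route, if carried out, would be a quantitatively explicit CMC foliation, but that strength is not needed for the theorem.
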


\begin{oss}
By construction, for each $j$ the set $E_j$ is a minimiser, more precisely, it is given by $\hat{E}_j \cap B$ for a set with finite perimeter $\hat{E}_j \subset \hat{B}$ that minimises $J_\lambda$ in $B \subset \hat{B}$ (among sets that coincide with $\hat{E}_j$ in $\hat{B}\setminus B$). The mean curvature vector of $|\partial^* E_j|$ in $B$ is given by $\lambda \nu_{E_j}$, where $\nu_{E_j}$ is the inward pointing unit normal.
\end{oss}

\begin{oss}
The regularity theory for $n=7$ implies not only that the singular set is made of isolated points, but also that any varifold tangent cone (at a singular point) must be regular, via a standard dimension reduction argument. Therefore Theorem \ref{thm:main_dim8} follows from Theorem \ref{thm:main_gendim}.
\end{oss}

\begin{oss}
In the special case $\lambda=0$ Theorems \ref{thm:main_dim8} and \ref{thm:main_gendim} were proved in \cite{HS} (see also \cite{CMS}). Our proof relies on the result for $\lambda=0$.
\end{oss}

\begin{oss}
In both Theorems \ref{thm:main_dim8} and \ref{thm:main_gendim}, the convergence $T_j \to \partial^* E$ is strong (graphical and $C^2$) in $B \setminus \{p\}$, thanks to Allard's regularity theorem and standard elliptic PDE theory.
\end{oss}

\begin{oss}
Theorems \ref{thm:main_dim8} and \ref{thm:main_gendim} lend themselves applications in geometry, such as the surgery procedure in \cite{BMS} (where a generic existence result for smooth CMC closed hypersurfaces in compact Riemannian $8$-dimensional manifolds is proved).
\end{oss}

\medskip

In proving Theorem \ref{thm:main_gendim} (which we will do in Section \ref{approx}, see Theorem \ref{app thm}) we establish a result of independent interest on the existence and regularity of minimisers of $J_\lambda$, for the CMC Plateau problem. We present here a simplified version (sufficient for its scope within the proof of Theorem \ref{thm:main_gendim}). The more general result requires some notation and will be given in Theorem \ref{thm:variational} of Section \ref{variational_setup}.

\begin{thm}
\label{thm:variational_simplified}
Let $E_0$ be a set with finite perimeter in $U=B^{n+1}_{R}(p)$. Let $\lambda \in (0, \infty)$ and $r \in (0, \frac{n}{\lambda})$, with $r<R$. Assume that $\partial E_0$ is smooth in a neighbourhood of $\partial B^{n+1}_{r}(p)$ and that it intersects $\partial B^{n+1}_{r}(p)$ transversely; let $T_0$ denote (the $(n-1)$-dimensional submanifold) $\partial E_0 \cap \partial B^{n+1}_{r}(p)$.

There exists a set $E$, with finite perimeter in $B^{n+1}_{R}(p)$, that coincides a.e.~with $E_0$ in $B_R^{n+1}(p) \setminus B_r^{n+1}(p)$, that is a minimiser of $J_\lambda$ in $B_r^{n+1}(p) \subset B^{n+1}_{R}(p)$, and with the following properties:
\begin{itemize}
 \item there exists $\Sigma \subset B_r^{n+1}(p)$, closed in $B_r^{n+1}(p)$, with $\text{dim}_{\mathcal{H}}(\Sigma) \leq n-7$ such that $\big(\overline{\partial^* E} \cap B_r^{n+1}(p)\big) \setminus \Sigma$ is a smoothly embedded hypersurface with mean curvature $\lambda \nu_E$, where $\nu_E$ is the inward unit normal to $E$; more precisely, $\Sigma = \emptyset$ if $n\leq 6$, and $\Sigma$ is discrete if $n=7$.
 \item $\overline{\partial^* E} \cap \partial B^{n+1}_{r}(p) = T_0$.
\end{itemize}
\end{thm}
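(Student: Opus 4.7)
My plan is to prove the theorem in two main stages: first produce a $J_\lambda$-minimiser $E$ in the admissible class by the direct method and invoke the standard regularity theory for $J_\lambda$-minimisers, then establish the precise boundary trace $\overline{\partial^* E} \cap \partial B^{n+1}_{r}(p) = T_0$ via a strict maximum principle that crucially uses the hypothesis $r < n/\lambda$.

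For existence, I work in the admissible class $\mathcal{A} = \{D \subset B^{n+1}_{R}(p) : D = E_0 \text{ a.e.\ in } B^{n+1}_{R}(p) \setminus B^{n+1}_{r}(p)\}$. Since every $D \in \mathcal{A}$ satisfies $|D| \leq |E_0| + |B^{n+1}_{r}(p)|$, $J_\lambda$ is bounded below on $\mathcal{A}$. A minimising sequence $(D_k)$ has uniformly bounded perimeter, via $\text{Per}(D_k) \leq J_\lambda(D_k) + \lambda |D_k|$, so BV compactness in $B^{n+1}_{R}(p)$ extracts an $L^1_{\mathrm{loc}}$-limit $E \in \mathcal{A}$; lower semicontinuity of perimeter and $L^1$-continuity of Lebesgue measure confirm that $E$ attains the infimum. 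The interior smoothness of $\overline{\partial^* E} \cap B^{n+1}_{r}(p)$ away from $\Sigma$, together with $\dim_{\mathcal{H}} \Sigma \leq n-7$ (with $\Sigma = \emptyset$ for $n\le 6$ and $\Sigma$ discrete for $n=7$) and $\vec{H} = \lambda \nu_E$ on the smooth part, then follow from the regularity theory for $J_\lambda$-minimisers cited in the introduction.

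For the boundary trace, the inclusion $T_0 \subset \overline{\partial^* E}$ is immediate: since $E = E_0$ outside $B^{n+1}_{r}(p)$ and $\partial E_0$ is smooth and transverse to $\partial B^{n+1}_{r}(p)$ along $T_0$, each $q \in T_0$ is a density-$\tfrac{1}{2}$ point of $E$. For the reverse inclusion, fix $q \in \partial B^{n+1}_{r}(p) \setminus T_0$. Smoothness and transversality of $\partial E_0$ near $\partial B^{n+1}_{r}(p)$ imply that in some neighbourhood $N_q$ of $q$ the set $E_0$ either contains $N_q$ or is disjoint from $N_q$; without loss of generality $N_q \subset E_0$, so $N_q \setminus B^{n+1}_{r}(p) \subset E$. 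The hypothesis $r < n/\lambda$ gives that $\partial B^{n+1}_{r}(p)$, oriented inwards, has scalar mean curvature $n/r > \lambda$, and this strict gap I would exploit via internally tangent balls. For $\rho < n/\lambda$ small and $q' = p + (r-\rho)(q-p)/r$, the ball $B^{n+1}_{\rho}(q')$ lies in $\overline{B^{n+1}_{r}(p)}$ with boundary mean curvature $n/\rho > \lambda$. Using $E' = E \cup B^{n+1}_{\rho}(q')$ as competitor together with the set-union perimeter inequality $\text{Per}(E \cup F) + \text{Per}(E \cap F) \le \text{Per}(E) + \text{Per}(F)$ and the minimality $J_\lambda(E) \le J_\lambda(E')$ gives
\[
\text{Per}(E \cap B^{n+1}_{\rho}(q')) + \lambda |B^{n+1}_{\rho}(q') \setminus E| \le \text{Per}(B^{n+1}_{\rho}(q')).
\]
If $q \in \overline{\partial^* E}$, the density lower bound from the monotonicity formula for $J_\lambda$-minimisers forces $\partial^* E$ to carry nontrivial $\mathcal{H}^n$-mass inside $B^{n+1}_{\rho}(q')$, and a blow-up comparison at $q$ contradicts the above estimate in the limit $\rho \to 0$ via the strict inequality $n/\rho > \lambda$.

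The main obstacle is this barrier step: aligning the monotonicity density bound for $\partial^* E$ with the perimeter comparison from internally tangent balls, and quantitatively extracting the strict sign $n/r - \lambda > 0$ to rule out any tangential touching of $\overline{\partial^* E}$ with $\partial B^{n+1}_{r}(p)$ away from $T_0$. An equivalent route through a Solomon--White-type strong maximum principle for $J_\lambda$-minimisers against smooth mean-convex barriers would also conclude the argument; both approaches rely decisively on the hypothesis $r < n/\lambda$.
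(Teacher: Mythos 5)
Your first stage (existence by the direct method and interior regularity for $J_\lambda$-minimisers in $B_r^{n+1}(p)$) is correct and is essentially what the paper does (Lemma~\ref{lem:Plateau_existence} and standard regularity theory). The gap is in the boundary-trace step, and it is exactly the step the paper singles out as its sharpening over Duzaar--Fuchs.

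The competitor inequality you derive from $E'=E\cup B_\rho(q')$, namely
\[
\text{Per}\big(E \cap B_\rho(q')\big) + \lambda\,\big|B_\rho(q')\setminus E\big| \;\leq\; \text{Per}\big(B_\rho(q')\big),
\]
carries no contradiction on its own: it is satisfied with equality when $B_\rho(q')\subset E$, and it is satisfied with strict inequality whenever $E$ simply misses a cap of $B_\rho(q')$ (take $E\cap B_\rho(q')=\emptyset$: the inequality becomes $\lambda\rho\le n+1$). Moreover, if $q\in\overline{\partial^* E}$ the density lower bound places $\partial^* E$-mass in $B_\tau(q)\cap \overline{B^{n+1}_r(p)}$, not necessarily inside the internally tangent ball $B_\rho(q')$; the mass can sit in the crescent $B^{n+1}_r(p)\setminus B_\rho(q')$. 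The phrase ``a blow-up comparison at $q$ contradicts the above estimate'' thus hides the entire argument rather than indicating it.

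To make any maximum-principle route work at $q$ you first need the varifold $V=\big|\partial^* E\setminus(\partial^* E_0\cap (B_R\setminus \overline{B_r}))\big|$ to be \emph{stationary} with respect to $J_\lambda$ across $\partial B^{n+1}_r(p)\setminus T_0$; this is not automatic, because competitors that move mass off $\partial B_r(p)$ are only allowed in one direction. The paper proves this via a one-sided first-variation computation producing a non-negative obstacle measure $\mathcal{M}$ supported on $\partial B_r$ (Lemma~\ref{lem:sign_first_var}), and then uses $\text{div}_{\partial B_r}N=-n/r$ together with $\lambda<n/r$ to force $\mathcal{M}=0$ (Lemma~\ref{lem:stationarity}). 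Even after stationarity is established, a Solomon--White/Ilmanen-type argument at a touching point $q$ is obstructed by the possibility that the tangent hyperplane at $q$ occurs with integer multiplicity $\geq 2$, so Allard's theorem alone does not give $C^{1,\alpha}$ regularity there. This is precisely why the paper invokes the sheeting theorem for stable CMC varifolds from \cite{BeWi1,BeWi2} (Lemma~\ref{lem:obstacle}): it yields that near $q$ the support is a union of $C^2$ CMC discs, at which point the classical strict maximum principle with the sphere $\partial B_r(p)$ (mean curvature $n/r>\lambda$) applies and gives the contradiction. Your proposal would need both ingredients --- the one-sided variation argument establishing $\mathcal{M}=0$ and the sheeting theorem to handle multiplicity --- neither of which is supplied by the internally-tangent-ball comparison.
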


In the more general formulation that we will provide with Theorem \ref{thm:variational}, both smoothness and tranversality conditions will be removed (see also Remark \ref{oss:introtheorem}).   

\medskip

The `boundary condition' in Theorem \ref{thm:variational_simplified} is set by prescribing the coincidence a.e.~with a reference set $E_0$ (the condition $r<R$ provides an annulus in which $E_0$ is non-trivial). The submanifold $T_0$ acts as prescribed boundary condition for the hypersurface that we seek. The last conclusion of the theorem states that the solution does not touch $\partial B^{n+1}_{r}(p)$ except at $T_0$. So 
$\overline{\partial^* E}\cap B_r^{n+1}(p) \setminus \Sigma $ is a smooth hypersurface with boundary in the open set $B_R^{n+1}(p) \setminus \Sigma$. 
(Since $T_0$ is smooth, $\Sigma$ does not accumulate onto $T_0$ by Allard's boundary regularity theorem, \cite{All_boundary}; this property is not needed in our forthcoming arguments.) 

While the existence of a minimiser follows for any $\lambda$, the condition $\lambda <\frac{n}{r}$ is essential for the last conclusion of Theorem \ref{thm:variational_simplified}, as well as for the verification of the prescribed mean curvature condition. We will discuss this with examples in Remark \ref{oss:example_ball}; when $\lambda > \frac{n}{r}$ the hypersurface may in fact touch $\partial B_r^{n+1}(0)$ away from its boundary $T_0$. 

Theorem \ref{thm:variational_simplified} (and Theorem \ref{thm:variational} below) and its proof are close in spirit to the results in Duzaar--Fuchs \cite{DuFu} (and Duzaar \cite{Duz}). We highlight that our last conclusion in Theorem \ref{thm:variational_simplified} is sharper than the corresponding statement in \cite{Duz, DuFu}, since we are able to rule out any interior touching of the solution with the ``obstacle'' $\partial B^{n+1}_{r}(p)$ in which the boundary condition $T_0$ lies (the only touching is the necessary one at $T_0$ itself). The results in \cite{Duz, DuFu}, while establishing the validity of the CMC condition, would only prevent touching of the solution with larger spheres. The sharper conclusion we obtain is ultimately due to our use of the regularity theory for stable CMC hypersurfaces developed in \cite{BeWi1, BeWi2} (with the sheeting theorem therein being the key ingredient in our proof). The same reasoning that we employ to that end (see Lemma \ref{lem:obstacle} and the discussion preceding it) can be applied to sharpen the corresponding conclusion in \cite{DuFu} (where the relevant class is that of integral currents, rather than boundaries of Caccioppoli sets).
\medskip

We are now ready to present an outline of the proof of Theorem \ref{thm:main_gendim}, setting $p=0$. By fairly standard arguments, there exists a sufficiently small ball centred at $0$, which we denote by $B_{2R}(0)$, such that $E$ is the unique minimiser of $J_\lambda$ in $B_R(0) \subset B_{2R}(0)$, and with the further requirements that $\lambda<\frac{n}{R}$ and that $\partial E$ meets $\partial B_R(0)$ smoothly and transversely.

Then we perturb $E$ towards its interior (keeping it fixed outside an annulus that contains $\partial B_R(0)$) and use the resulting set as `boundary condition' in $B_{2R}(0) \setminus B_R(0)$ for a CMC Plateau problem. The perturbation is indexed on $j$ and tends to the identity as $j\to \infty$, and we denote the deformed set by $E_j \subset E$. For each $j$ we find a minimiser of $J_\lambda$ with said boundary condition; note that Theorem \ref{thm:variational_simplified} applies here. Theorem \ref{thm:main_gendim} follows by showing the existence of a sufficiently small ball centred at $0$ in which, for all sufficiently large $j$, $\overline {\partial^* E_j}$ are smooth. Arguing by contradiction, we assume the existence of singular points $p_j \in \overline{\partial^* E_j}$, $p_j \to 0$. If the condition $p_j \not = 0$ is valid (for all sufficiently large $j$) then we dilate $E_j$ around $0$ by scaling $B_R(0)$ to $B_{\frac{R}{|p_j|}}(0)$. Using \cite{HS}, we check that the limit of these rescalings of $\overline{\partial^* E_j}$ has to be either one of the leaves of the Hardt--Simon foliation, or the tangent cone $C$ to $\overline{\partial^* E}$ at $0$: in either case we find a contradiction to the smoothness respectively of the leaves, or of the cone (at points at distance $1$ from the origin).

Therefore we have to establish the condition $p_j \not = 0$. By construction $E_j \subsetneq E$ and both boundaries are hypersurfaces with the same scalar mean curvature, and with mean curvature vectors both pointing inwards. We thus show that the inclusion is strict everywhere by proving an instance of a singular maximum principle for CMC hypersurfaces, see Proposition \ref{Prop:sing_max_princ} below. Its proof (by contradiction) relies on a linearisation argument that yields a non-trivial Jacobi field on the cone $C$ (an analogous argument appears in \cite{HS} in the minimal case), combined with Simon's result \cite{Sim_unique_tan}, which gives a quantitative decay of $\overline{\partial^* E}$ towards $C$ at small scales. The resulting behaviour of the Jacobi field is in contradiction with the ones that are known (\cite{CaHaSi}) to be permitted by the stability of the cone (stability follows from the minimising condition for $E$).

\medskip

\noindent \textbf{Acknowledgements}. The material in this work overlaps partly with the PhD Thesis of K.~L., who would thus like to thank University College London for the stimulating environment provided during the years spent there as a graduate student. K.~L.~would also like to thank Kobe Marshall-Stevens for many enlightening discussions. The final stages of this work were completed while C.~B.~was in residence at the Simons Laufer Mathematical Sciences Institute (formerly MSRI) in Berkeley, California, during the Fall 2024 semester, supported by the National Science Foundation under Grant No.~DMS-1928930.  The authors wish to thank the anonymous referees for constructive comments and suggestions.

\section{Prescribed CMC Plateau problem}
\label{variational_setup}

In the following we denote by $B_R$ the open ball $B^{n+1}_R(0) \subset \R^{n+1}$. 
Let $E_0$ be a set of finite perimeter in $B_2$, that is, $E_0 \subset B_2$ is measurable and the perimeter of $E_0$ in $B_2$ is finite,
\[\text{Per}_{B_2}(E_0) =\sup \Bigg\{\int_{E_0} \text{div}T \: d\mathcal{L}^{n+1} \, : T\in C^1_c(B_2;\R^{n+1}), \sup|T| \leq 1 \Bigg \} <\infty,\] where $\mathcal{L}^{n+1}$ denotes the Lebesgue measure on $\R ^{n+1}.$ This is equivalent to the requirement that the characteristic function $\chi_{E_0} \in \text{BV}(B_2)$, that is, the distributional gradient $D \chi_{E_0}$ is a vector valued Radon measure with finite total variation in $B_2$.
 
 \medskip
 
For $\lambda \geq 0$ we will be interested in the following energy, defined on the class of sets of finite perimeter in $B_2$ that coincide with the given $E_0$ in $B_2 \setminus B_1$:
\[J_\lambda(E) = \text{Per}_{B_2}(E) - \lambda |E|, \]
where $|E|=\mathcal{L}^{n+1}(E)=\mathcal{H}^{n+1}(E)$ is the $(n+1)$-volume of the Caccioppoli set $E \subset B_2$. (The Lebesgue measure $\mathcal{L}^{n+1}$ agrees with the Hausdorff measure $\mathcal{H}^{n+1}$ in $\mathbb{R}^{n+1}$.) This class is non-empty, since $E_0$ is one such set, and $J_\lambda(E_0)<\infty$, hence it makes sense to seek a minimiser of $J_\lambda$ in this class.

\begin{lem}
\label{lem:Plateau_existence}
There exists a minimiser $F$ of $J_\lambda$ in the class of sets with finite perimeter that coincide with the given $E_0$ in $B_2 \setminus B_1$.
\end{lem}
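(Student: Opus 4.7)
The plan is to apply the direct method of the calculus of variations. First I would observe that the class of competitors is non-empty (it contains $E_0$) and that $J_\lambda$ is bounded below on this class: since every admissible $E$ satisfies $E \subset B_2$, we have $0 \leq |E| \leq |B_2|$, and therefore
\[
J_\lambda(E) = \mathrm{Per}_{B_2}(E) - \lambda |E| \geq -\lambda |B_2|.
\]
Let $\inf_\lambda$ denote the infimum of $J_\lambda$ over the admissible class, which is finite because $\inf_\lambda \leq J_\lambda(E_0) < \infty$.

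Next I would take a minimising sequence $\{E_j\}$ with $J_\lambda(E_j) \to \inf_\lambda$. The key a priori estimate is a uniform perimeter bound: for all $j$ large,
\[
\mathrm{Per}_{B_2}(E_j) = J_\lambda(E_j) + \lambda |E_j| \leq J_\lambda(E_0) + 1 + \lambda |B_2|.
\]
Combined with $|E_j| \leq |B_2|$, this gives a uniform bound on $\|\chi_{E_j}\|_{\mathrm{BV}(B_2)}$. By the standard BV compactness theorem, a subsequence (not relabelled) satisfies $\chi_{E_j} \to \chi_F$ in $L^1(B_2)$ for some set of finite perimeter $F \subset B_2$.

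Finally I would verify the three properties that make $F$ a minimiser. The boundary condition $F = E_0$ almost everywhere in $B_2 \setminus B_1$ follows from the $L^1$ convergence, since $\chi_{E_j} = \chi_{E_0}$ on $B_2 \setminus B_1$ for every $j$, so $F$ lies in the admissible class. The volume term is continuous under $L^1$ convergence, giving $|E_j| \to |F|$. The perimeter term is lower semicontinuous under $L^1$ convergence of characteristic functions, so $\mathrm{Per}_{B_2}(F) \leq \liminf_j \mathrm{Per}_{B_2}(E_j)$. Combining these,
\[
J_\lambda(F) \leq \liminf_j J_\lambda(E_j) = \inf{}_\lambda,
\]
which forces equality and exhibits $F$ as the desired minimiser. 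There is no real obstacle here; the only points requiring care are the boundedness of $-\lambda|E|$ (which uses that competitors are constrained to the bounded set $B_2$) and the fact that the fixed boundary condition passes to $L^1$ limits.
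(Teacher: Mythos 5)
Your proposal is correct and follows essentially the same direct-method argument as the paper: uniform perimeter bound from the minimising sequence, BV compactness, preservation of the boundary condition under $L^1$ convergence, and lower semicontinuity of perimeter combined with continuity of the volume term. The only cosmetic addition is your explicit remark that $J_\lambda$ is bounded below on the admissible class, which the paper leaves implicit.
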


\begin{proof}
We will use the direct method.
Let $E_j$, for $j\in \N \setminus \{0\}$, be a minimising sequence (of sets in the admissible class), that is 
\[\lim_{j\to \infty} J_\lambda(E_j) = \inf \big\{J_\lambda(E): \chi_E \in \text{BV}(B_2), \; \chi_E |_{B_2 \setminus B_1} = \chi_{E_0}|_{B_2 \setminus B_1} \big\}.\]
For all sufficiently large $j$ we must then have 
\[J_\lambda(E_j) =  \text{Per}_{B_2}(E_j) - \lambda|E_j| \leq J_\lambda(E_0)+1 = \text{Per}_{B_2}(E_0) - \lambda |E_0| +1 , \]
from which 
\[ \text{Per}_{B_2}(E_j) \leq \text{Per}_{B_2}(E_0) - \lambda |E_0| +\lambda|E_j| +1 \leq \text{Per}_{B_2}(E_0)+\lambda |B_2| +1.\]
Therefore $\text{Per}_{B_2}(E_j)$ are uniformly bounded above and there exist (by BV compactness) a set of finite perimeter $F$ in $B_2$ and a subsequence (that we do not relabel) $E_j$ such that $\chi_{E_j} \to \chi_F$ in BV($B_2$). In particular, $\chi_{E_j} \to \chi_F$ in $L^1(B_2)$, so that $|E_j|\to |F|$; moreover, by the hypothesis that $E_j=E_0$ on $B_2 \setminus B_1$, we have also that $F=E_0$ on $B_2 \setminus B_1$. The lower semi-continuity of perimeters then gives $J_\lambda(F) \leq \liminf_{j\to \infty} J_\lambda(E_j)$, therefore $F$ minimises $J_\lambda$ in the admissible class.
\end{proof}

The energy $J_\lambda$ is relevant in many variational problems. The geometric significance of $J_\lambda$ lies in the fact that it should select, as its critical points, sets whose boundary is a hypersurface with constant mean curvature $\lambda$. With the set up above, we are using $E_0$ to prescribe a boundary condition (in the sense of the Plateau problem). If $\partial E_0$ is smooth and intersects $\partial B_1$ transversely, then the set up amounts to fixing $\partial E_0 \cap \partial B_1$ as $(n-1)$-dimensional boundary data, and looking for a ($n$-dimensional) CMC hypersurface-with-boundary, with mean curvature $\lambda$, and whose boundary is $\partial E_0 \cap \partial B_1$. The hope is to obtain this hypersurface-with-boundary as $\partial F \setminus \big(\partial E_0 \cap (B_2 \setminus \overline {B_1})\big)$ (if $\partial F$ is smooth). 

\begin{oss}
If $\lambda<0$ and $F$ is a minimiser of $J_{|\lambda|}$ in $B_1\subset B_2$, then $U\setminus F$ is a minimiser of $J_\lambda$ in $B_1\subset B_2$ (and vice versa), so we only treat the case $\lambda \geq 0$ (and all results extend in a straightforward manner to $\lambda<0$). This follows from the fact that complementary sets have the same perimeter (in an open set).
\end{oss}

A well-known consequence of the minimising property is that the integral varifold $V$ (in $B_2$) defined by 
\[V=\big|\partial^* F \setminus \big(\partial^* E_0 \cap (B_2 \setminus \overline{B_1})\big)\big|\]
(the notation $|\quad|$ denotes the multiplicity-$1$ varifold associated to a rectifiable set) has first variation in $B_1$ represented by the vector-valued measure 
\[\lambda (\mathcal{H}^n \res (\partial^* F \cap B_1)) \nu_F,\]
where $\nu_F$ is the (measure theoretic) inward unit normal ($\mathcal{H}^n$-a.e.~well-defined on $\partial^* F$). Indeed, given any vector field $X \in C^1_c(B_1;\R^{n+1})$, we can consider, for $\delta>0$ sufficiently small, the one-parameter family of diffeomorphisms $\Phi_t=Id + t X$ for $t\in (-\delta, \delta)$. For every such $t$, we have $\Phi_t=Id$ on $B_2 \setminus B_1$ and therefore the set $\Phi_t(F)$ remains in the admissible class for every $t$. The image of $V$ under $\Phi_t$ is $\big|\partial^* \Phi_t(F) \setminus \big(\partial^* E_0 \cap (B_2 \setminus \overline{B_1})\big)\big|$.

This permits to write the stationarity condition for $V$ with respect to the energy $J_\lambda$, which gives (see e.g.~\cite[Chapters 17 and 19]{Mag})
\[\int \text{div}_{\partial^* F} X \,dV + \lambda \int (\nu_F \cdot X)\, dV =0\]
and the desired conclusion. The candidate $V$ thus has the correct mean curvature in $B_1$.

\begin{oss}
The notation $|\quad|$ has been (and will be) employed to denote the $(n+1)$-volume when the argument is a Caccioppoli set (as in $|E|$ above), and to denote the multiplicity-$1$ ($n$-dimensional) varifold associated to an $n$-dimensional rectifiable set (as for $V$ above). The context and the different character of the argument should avoid any confusion.
\end{oss}

Next we are going to examine when it is possible to conclude this same condition away from the prescribed boundary: the missing analysis at this stage is the behaviour at points that potentially lie on $\partial B_1$ but are not part of the prescribed boundary. We begin by pointing out that, if the vector field $X$ is non-zero somewhere on $\partial B_1$, then the above argument breaks down, since a one-parameter family of diffeomorphisms with initial speed $X$ may map $F$ to a set that is not in the admissible class (no matter how small $\delta$ is). In fact, the minimiser may just fail to have mean curvature $\lambda$ when $\lambda>n$, as the following examples show.

\begin{oss}
\label{oss:example_ball}
Let $H$ be the half-space $\{x_{n+1} < 0\}$ and $E_0 = H\cap B_2$. Then for any given $\lambda>n$ the minimisation procedure fails to produce a set whose boundary is a CMC hypersurface-with-boundary with mean curvature $\lambda$ and boundary condition $\partial H \cap \partial B_1$. (In fact, the unique minimiser $F$ is given by $E_0 \cup B_1$ for all $\lambda\geq n$.) To see that, we observe that, for any given possible value $v  \in \Big[\frac{|B_1|}{2}, |B_1|\Big]$, the (unique) perimeter-minimiser with volume $v$ in $B_1$, that coincides with $E_0$ in $B_2 \setminus B_1$, is given by the set $E_0 \cup E_v$, where $E_v$ is the ball of radius $r$ centred at the point $(0, \ldots, 0, -\sqrt{r^2 - 1})$, where $r\geq 1$ is chosen so that $|E_v \cap B_1|=v$. Similarly, for any given possible value $v\in |E \cap B_1| \in \Big[0, \frac{|B_1|}{2}\Big]$, the perimeter-minimiser with volume $v$ in $B_1$, and that coincides with $E_0$ in $B_2 \setminus B_1$, is given by the set $E_0 \setminus \tilde{E}_v$, where $\tilde{E}_v$ is the ball of radius $r$ centred at the point $(0, \ldots, 0, \sqrt{r^2 - 1})$, where $r\geq 1$ is chosen so that $|\tilde{E}_v \cap B_1|=|B_1|-v$.
The minimisation property just claimed is checked by a calibration argument, using the fact that $\partial E_v \cap B_1$ (and, similarly, $\partial \tilde{E}_v \cap B_1$) is a CMC graph on $B^n_1 \subset \R^n \equiv \R^n \times \{0\}$. (See e.~g.~\cite[Appendix B]{BeWi1}.) With this understood, the minimiser of $J_\lambda$ (for any $\lambda$) has to be one of the minimising sets that have been exhibited for each possible value of $v$. Each of these minimisers has scalar mean curvature in $[-n,n]$ (away from $B_2 \setminus \overline{B_1}$). Hence for any $\lambda>n$ the minimisation procedure will not produce the desired CMC hypersurface of mean curvature $\lambda$. (By direct computation, one can check that the lowest value of $J_\lambda$ for $\lambda>n$ is attained by $E_0 \cup B_1$.)

In the case $\lambda = n+1$ one can alternatively see that the minimiser is $E_0 \cup B_1$ by arguing as follows. Given any Caccioppoli set $D$ that coincides with $E_0$ in $B_2 \setminus B_1$, consider the $(n+1)$-current $C=\llbracket E_0 \cup B_1 \rrbracket - \llbracket D \rrbracket$. Denoting by $\iota_T$ the interior product with $T$, we define the $n$-form $\beta = \iota_T ( dx^1 \wedge \ldots \wedge dx^{n+1})$, with $T=(x_1, \ldots, x_{n+1})$. Then $d \beta = (\text{div} T)dx^1 \wedge \ldots \wedge dx^{n+1} = (n+1) dx^1 \wedge \ldots \wedge dx^{n+1}$. We note that $C$ is supported in $\overline{B_1}$, so it can act on $d\beta$ (by introducing a cut off function that is $1$ on $\overline{B_1}$ and vanishes outside $B_2$). Then the equality $C(d \beta) = (\partial C)(\beta)$ gives $\partial \llbracket E_0 \cup B_1 \rrbracket (\beta) - (n+1) |E_0 \cup B_1|=\partial \llbracket D \rrbracket(\beta) - (n+1) |D|$. Finally we note that $\partial \llbracket E_0 \cup B_1 \rrbracket (\beta) = \text{Per}_{B_2} (E_0 \cup B_1) - \text{Per}_{B_2 \setminus \overline{B_1}} H + \mathcal{H}^n(\partial B_2)$, while $\partial \llbracket D \rrbracket(\beta) \leq  \text{Per}_{B_2} (D) - \text{Per}_{B_2 \setminus \overline{B_1}} (H)+ \mathcal{H}^n(\partial B_2)$, which gives that $J_{n+1}(E_0\cup B_1) \leq J_{n+1}(D)$, that is, $E_0 \cup B_1$ is a minimiser. In fact, the inequality is not strict if and only if $\partial^* D \setminus (B_2 \setminus \overline{B_1})$ is a.e.~orthogonal to $T$ and contained in $\partial B_1$, which shows that $E_0\cup B_1$ is the unique minimiser.  
\end{oss}
 
Before proceeding further we set up some notation. 
The integral $(n+1)$-current $\llbracket E_0 \rrbracket$ in $B_2$ admits a well-defined (outer) slice $\langle \llbracket E_0\rrbracket, |x|=1^+\rangle= - \partial\llbracket E_0 \cap (B_2\setminus \overline{B_1}) \rrbracket + (\partial \llbracket E_0 \rrbracket) \res (B_2 \setminus \overline{B_1})$. (See e.g.~\cite[Section 2.5]{GMS}.) This (outer) slice also coincides with $\langle \llbracket F\rrbracket, |x|=1^+\rangle$. Let $T_0$ denote the $(n-1)$-dimensional current 
\[T_0=-\partial \langle \llbracket E_0\rrbracket, |x|=1^+\rangle = -\partial \Big((\partial \llbracket E_0 \rrbracket) \res (B_2 \setminus \overline{B_1})\Big),\]
then the Plateau problem under consideration seeks an integral $n$-current with boundary $T_0$. Note that $\partial \llbracket F \rrbracket = \partial \llbracket F \cap B_1 \rrbracket + \partial \llbracket E_0 \cap (B_2 \setminus B_1) \rrbracket$ so 
\[S:=\partial \llbracket F \cap B_1 \rrbracket - \langle \llbracket F\rrbracket, |x|=1^+\rangle = \partial \llbracket F \rrbracket- (\partial \llbracket E_0 \rrbracket) \res (B_2 \setminus \overline{B_1})\]
has boundary $\partial S = T_0$. The integral $n$-current $S$ is our candidate (hypersurface-with-boundary) solution to the Plateau problem. We let 
\[\mathcal{S}=\partial^* F \setminus  \big(\partial^* E_0 \cap (B_2 \setminus \overline{B_1})\big),\]
then $S = (\mathcal{S}, 1, -\star \nu_F)$, where $\star$ is the Hodge star (so $\nu_F \wedge \star \nu_F$ gives the positive orientiation of $\R^{n+1}$) and $\nu_F$ is the unit inward (measure theoretic) normal for $F$ on its reduced boundary. Also note that $V = \underline{v}(\mathcal{S}, 1)$ is the associated varifold (with notation from \cite{Simon_notes}).
 
\medskip

We turn our attention to the analysis of the first variation (with respect to $J_\lambda$) of $V$ on $B_2\setminus \text{spt}T_0$. Combining Lemma \ref{lem:Plateau_existence} with Lemmas \ref{lem:sign_first_var}, \ref{lem:stationarity}, \ref{lem:obstacle} below, we will in particular prove the following overall result.

\begin{thm}
\label{thm:variational}
With the above setting and notation, let $\lambda \in (0, n)$. In the class of sets with finite perimeter that coincide with the given $E_0$ in $B_2 \setminus B_1$ there exists a minimiser $F$ of $J_\lambda$, and there exists a set $\Sigma \subset B_1$ with $\text{dim}_{\mathcal{H}} \Sigma \leq n-7$, such that $(\text{spt}V  \setminus \text{spt}T_0) \setminus \Sigma$ is a smoothly embedded CMC hypersurface with mean curvature vector $\lambda \nu_F$. If $n=7$, more precisely, $\Sigma$ is made of isolated points (possibly accumulating onto $\text{spt}T_0$). Moreover, $\text{spt}V  \setminus \text{spt}T_0 \subset B_1$. 
\end{thm}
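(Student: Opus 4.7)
The proof assembles the existence result Lemma \ref{lem:Plateau_existence} with the three auxiliary statements referenced above (Lemmas \ref{lem:sign_first_var}, \ref{lem:stationarity}, \ref{lem:obstacle}). First I would invoke Lemma \ref{lem:Plateau_existence} to produce a minimiser $F$. The interior first variation calculation already carried out in the excerpt, for vector fields $X \in C^1_c(B_1;\R^{n+1})$, shows that
\[
\int \text{div}_{\partial^* F} X\, dV + \lambda \int (\nu_F \cdot X)\, dV = 0,
\]
so that $V$ has generalised mean curvature $\lambda \nu_F$ in $B_1$. Since $F$ is an almost-minimiser of the perimeter in $B_1$ (in the sense of the prescribed-mean-curvature theory), the classical regularity theory (\cite{Mag, GMT, BeWi1}) yields a relatively closed set $\Sigma \subset B_1$ with $\text{dim}_{\mathcal{H}} \Sigma \leq n-7$ (isolated points when $n=7$) such that $(\partial^* F \cap B_1) \setminus \Sigma$ is a smoothly embedded hypersurface with mean curvature vector $\lambda \nu_F$.

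The substantive task is the analysis along $\partial B_1 \setminus \text{spt}\, T_0$. Here the interior argument fails because vector fields supported near $\partial B_1$ can push $F$ out of the admissible class, and as Remark \ref{oss:example_ball} illustrates, without the hypothesis $\lambda < n$ the minimiser may indeed touch $\partial B_1$ away from the prescribed data and fail to be CMC there. The plan is to use Lemma \ref{lem:sign_first_var} to extract a one-sided first variation inequality for vector fields $X \in C^1_c(B_2 \setminus \text{spt}\, T_0;\R^{n+1})$ with a definite sign against the outward radial direction on $\partial B_1$, and then to apply Lemma \ref{lem:stationarity} to upgrade this to genuine stationarity of $V$ with respect to $J_\lambda$ on $B_2 \setminus \text{spt}\, T_0$, once accumulation of $\text{spt}\, V$ onto $\partial B_1 \setminus \text{spt}\, T_0$ has been excluded.

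The decisive step, and the main obstacle of the proof, is Lemma \ref{lem:obstacle}: the exclusion of any such accumulation. As signalled in the introduction, the idea is that if $\text{spt}\, V$ were to approach $\partial B_1$ at some point $q \in \partial B_1 \setminus \text{spt}\, T_0$, stability (inherited from the minimising property of $F$) together with the sheeting theorem of \cite{BeWi1, BeWi2} would produce stable CMC graphical sheets over a portion of $\partial B_1$ near $q$. Since the scalar mean curvature of $\partial B_1$ with respect to its inward normal is $n$, the strict inequality $\lambda < n$ combined with the strong maximum principle for CMC graphs rules out such a tangency, yielding a contradiction. Verifying the hypotheses of the sheeting theorem (notably stability in a tubular neighbourhood of $q$ and suitable $C^0$ closeness of $V$ to a portion of $\partial B_1$, together with the correct orientation of $\nu_F$) is the technical heart of the argument.

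Once this obstacle step is established, the smooth CMC structure of $(\partial^* F \cap B_1) \setminus \Sigma$ extends across $\partial B_1 \setminus \text{spt}\, T_0$ by applying the interior regularity argument in the slightly larger open set $B_2 \setminus \text{spt}\, T_0$, giving that $(\text{spt}\, V \setminus \text{spt}\, T_0) \setminus \Sigma$ is a smoothly embedded CMC hypersurface with mean curvature vector $\lambda \nu_F$ and the dimension bound $\text{dim}_{\mathcal{H}} \Sigma \leq n-7$. The refined statement for $n=7$ then follows from the standard Federer-type dimension reduction applied to varifold tangents of $\partial^* F$, which at each singular point are area-minimising cones in $\R^8$ and whose singular sets hence reduce to the vertex.
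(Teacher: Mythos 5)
Your proposal follows the same overall architecture as the paper: Lemma \ref{lem:Plateau_existence} for existence, the interior first-variation identity for CMC in $B_1$, Lemma \ref{lem:sign_first_var} for the one-sided first-variation bound at $\partial B_1$, Lemma \ref{lem:stationarity}, and Lemma \ref{lem:obstacle} via the sheeting theorem of \cite{BeWi1, BeWi2} combined with the maximum principle against the sphere of mean curvature $n$. However, you have the logical dependency between stationarity and the obstacle step reversed, and this is not a cosmetic issue.

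In the paper, Lemma \ref{lem:stationarity} is established \emph{before} and entirely independently of the obstacle analysis: the sign bound $0 \leq \mathcal{M} \leq (\lambda - n)\, d\mathcal{H}^n \res (\mathcal{S} \cap \partial B_1)$ from Lemma \ref{lem:sign_first_var}, together with $\lambda \leq n$, directly forces $\mathcal{M} = 0$, with no information needed about whether $\text{spt}V$ accumulates onto $\partial B_1$. This order matters because the proof of Lemma \ref{lem:obstacle} leans essentially on stationarity in a full neighbourhood of a putative contact point $p \in \partial B_1 \setminus \text{spt}T_0$: it is what provides the monotonicity formula for the mass, hence the existence and stationarity of tangent cones at $p$, which (combined with $\text{spt}V \subset \overline{B_1}$, a half-space condition) are shown to be hyperplanes; only then are the hypotheses of the sheeting theorem in place. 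If you try to prove the obstacle exclusion first and only then conclude stationarity, you are left without the monotonicity formula at points of $\partial B_1$ and the tangent cone classification does not get off the ground. You should state the order as: sign condition $\Rightarrow$ stationarity (for $\lambda \leq n$) $\Rightarrow$ tangent cone analysis + stability + sheeting theorem $\Rightarrow$ obstacle exclusion $\Rightarrow$ regularity of the minimiser by interior theory once $\text{spt}V \setminus \text{spt}T_0 \subset B_1$ is known. Also, once the obstacle step is done there is nothing to ``extend across $\partial B_1$''; the support simply does not reach $\partial B_1$ away from $T_0$ and interior regularity in $B_1$ is all that is used.
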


\begin{oss}
\label{oss:introtheorem}
By scaling and translating, the theorem can be stated replacing $B_1$, $B_2$ and $(0,n)$ respectively with $B_r^{n+1}(p)$, $B_{2r}^{n+1}(p)$, $(0,\frac{n}{r})$. Moreover, the role of $B_{2r}^{n+1}(p)$ is only to provide an annulus in which $E_0$ is non-trivial, so $2r$ can be replaced by any radius $R>r$. Theorem \ref{thm:variational_simplified} is thus a special case of Theorem \ref{thm:variational}, and in the case of Theorem \ref{thm:variational_simplified} the accumulation of $\Sigma$ onto $T_0$ is ruled out by \cite{All_boundary}. We also recall that, as well as the varifold $V$, we can associate to the minimiser $F$ an integral $n$-current $S$ such that $\partial S = T_0$ (see above for the definition of $S$).
\end{oss}

Our first result on the first variation (with respect to $J_\lambda$), Lemma \ref{lem:sign_first_var}, is valid for any $\lambda$ and yields a sign condition and an upper bound. The analysis needs to be carried out only in a neighbourhood of an arbitrary $p\in \partial B_1 \setminus \text{spt}T_0$ (since $\text{spt}V \subset \overline{B_1}$ and we have established that the first variation is $0$ in $B_1$). This result is the analogue of \cite[Theorem 4.1]{DuFu}. Here we keep using the notation introduced above (e.g.~$\nu_F$, $V$, $\mathcal{S}$, $T_0$).

\begin{lem}
\label{lem:sign_first_var} 
Let $X \in C^1_c(B_2\setminus \text{spt}T_0; \R^{n+1})$. Then the first variation with respect to $J_\lambda$ of $V$ evaluated on the vector field $X$ (equal to the left-hand-side of the following expression) satisfies

\[\int \text{div}_\mathcal{S} X d\mathcal{H}^n \res \mathcal{S}+ \lambda \int (\nu_F \cdot X)d\mathcal{H}^n \res \mathcal{S} = \int (X \cdot N) d \mathcal{M},\]
where $\mathcal{M}$ is a positive Radon measure supported in $\partial B_1$ and $N=-\frac{x}{|x|}$ (for $x\neq 0$). Moreover, $\mathcal{M} \leq \Big(\text{div}_{\mathcal{S}} N + \lambda (\nu_F \cdot N) \Big)d\mathcal{H}^n \res(\mathcal{S} \cap \partial B_1)$ (as measures).
\end{lem}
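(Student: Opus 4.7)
My plan is to identify the left-hand side, viewed as a linear functional $T$ on $C^1_c(B_2\setminus\text{spt}\,T_0;\R^{n+1})$, as a nonnegative Radon measure on $\partial B_1$ paired with $X\cdot N$, and to establish the claimed upper bound. The approach follows the Duzaar--Fuchs template \cite{DuFu}, adapted to the Caccioppoli-set setup. First I would localise $T$ to $\partial B_1$: the interior stationarity with respect to $J_\lambda$ established just before the lemma gives $T(Y)=0$ for all $Y\in C^1_c(B_1;\R^{n+1})$, and the inclusion $\mathcal{S}\subset\overline{B_1}$ gives $T(Y)=0$ for $Y$ supported in $B_2\setminus\overline{B_1}$. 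Hence $T$ is a distribution supported on $\partial B_1\setminus\text{spt}\,T_0$.

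Next, to produce the representation and the sign of $\mathcal{M}$, I would exploit the minimality of $F$ through carefully chosen admissible competitors. A naive one-parameter cut-and-paste family $F_t:=(\Phi_t(F)\cap B_1)\cup(E_0\cap(B_2\setminus\overline{B_1}))$, with $\Phi_t=\text{Id}+tX$, is admissible but typically has perimeter discontinuous at $t=0$ due to BV-trace mismatches on $\partial B_1$ at points where $\partial^* F$ did not already meet $\partial B_1$. Following \cite{DuFu}, I would instead use one-sided competitors---conceptually excising from (or filling into) $F$ a thin layer adjacent to $\partial B_1$ whose thickness profile is proportional to $(X\cdot N)_+$---chosen so that the first-order variation of $J_\lambda$ is well-defined and equals $T(X)$ up to a boundary term. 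The minimality inequality, applied to inward-pointing $X$ (for which the competitor is admissible for $t\geq 0$ and produces a well-behaved one-sided derivative), yields $T(X)\geq 0$; applying the same argument to $\pm X$ for tangential $X$ gives $T(X)=0$. A standard distributional argument then yields the representation $T(X)=\int_{\partial B_1}(X\cdot N)\,d\mathcal{M}$ for a unique nonnegative Radon measure $\mathcal{M}$ on $\partial B_1$.

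For the upper bound, I would test the identity against $X=\phi\widetilde{N}$ with $\phi\in C^1_c(\partial B_1\setminus\text{spt}\,T_0)$ nonnegative and $\widetilde{N}=-x/|x|$ extended smoothly to a neighbourhood of $\partial B_1$; I would moreover extend $\phi$ radially (constant along rays through the origin) so that the tangential-gradient term is controlled. Expanding
\[
T(\phi\widetilde N)=\int_\mathcal{S}\bigl[\phi(\text{div}_\mathcal{S}\widetilde N+\lambda\nu_F\cdot\widetilde N)+\nabla_\mathcal{S}\phi\cdot\widetilde N\bigr]\,d\mathcal{H}^n\res\mathcal{S},
\]
and applying interior stationarity together with the divergence theorem on $\mathcal{S}\cap B_1$ (which converts the contribution from $\mathcal{S}\cap B_1$ into a boundary term on $\mathcal{S}\cap\partial B_1$), only a boundary contribution survives. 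Setting this equal to $\int\phi\,d\mathcal{M}$ and using $\widetilde N=N$ on $\partial B_1$ yields the upper bound $\mathcal{M}\leq(\text{div}_\mathcal{S}N+\lambda\nu_F\cdot N)\,d\mathcal{H}^n\res(\mathcal{S}\cap\partial B_1)$.

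The main obstacle will be the construction of the admissible competitor with a well-defined first-order expansion: the naive cut-and-paste fails because the BV-trace on $\partial B_1$ can jump discontinuously at $t=0$, so the refined one-sided comparison in the spirit of \cite{DuFu} is essential, together with a careful BV-trace analysis to identify the one-sided boundary correction and verify its correct sign.
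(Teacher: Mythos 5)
Your overall architecture (localisation to $\partial B_1$, one-sided admissible competitors, sign on normal test fields, tangential vanishing, representation as a nonnegative measure) matches the paper, which also follows the Duzaar--Fuchs template. The concern you raise about the naive cut-and-paste family having a discontinuous perimeter at $t=0$ is real, and the paper resolves it exactly by the ``filling-in'' construction you anticipate: it builds competitors via the homotopy formula, pushing the outer slice $\langle\llbracket F\rrbracket,|x|=1^+\rangle$ inward with a cut-off $\phi_s(z)=z+s\,\eta(z)f_\epsilon(d(z))N(z)$ and taking $F_\sigma$ to be $\phi_\sigma(F\cap B_1)$ together with the swept wedge $-\Phi_\sharp([0,\sigma]\times\langle\llbracket F\rrbracket,|x|=1^+\rangle)$. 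Up to here your plan is essentially the paper's.

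The genuine gap is in your derivation of the upper bound. The paper never invokes a divergence theorem on $\mathcal{S}\cap B_1$: instead it expands $\mathrm{div}_\mathcal{S}\big(\eta f_\epsilon(d)N\big)$ and sends $\epsilon\to 0$ in the cut-off; the upper bound is an \emph{inequality} precisely because the term $\int_\mathcal{S}\eta\,f_\epsilon'(d)\,|\nabla_\mathcal{S}d|^2\,d\mathcal{H}^n\leq 0$ is dropped, not because an exact identity converts an interior integral to a boundary term. Your proposed divergence-theorem step would, if it could be made rigorous, produce an equality rather than the stated one-sided bound, which is not what the lemma asserts (and is not obviously true). It is also not clear that one can integrate by parts on $\mathcal{S}\cap B_1$ with ``boundary'' $\mathcal{S}\cap\partial B_1$: $\mathcal{S}$ has a possibly nonempty interior singular set, and the behaviour of $\mathcal{S}$ at $\partial B_1$ is precisely what the lemma is designed to control, so you would be assuming regularity that is not yet available. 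Finally, extending $\phi$ radially makes $\nabla\phi\cdot\widetilde N=0$ but does not make $\nabla_\mathcal{S}\phi\cdot\widetilde N=0$ on $\mathcal{S}\cap B_1$ (the tangential projection $\nabla_\mathcal{S}\phi=\nabla\phi-(\nabla\phi\cdot\nu_F)\nu_F$ picks up a component along $\widetilde N$ wherever $\nu_F$ is not parallel to $\widetilde N$); the paper only uses the vanishing of $\nabla_\mathcal{S}\eta\cdot N$ \emph{on} $\mathcal{S}\cap\partial B_1$, where $\nu_F=N$ a.e., and handles the contribution from $\mathcal{S}\cap B_1$ via the $f_\epsilon$ cut-off and the interior stationarity applied to $\eta(1-f_\epsilon(d))N$. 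You should replace the divergence-theorem step by the $\epsilon\to 0$ argument with the signed $f_\epsilon'$ term.
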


\begin{proof}
Let $p\in \partial B_1 \setminus \text{spt}T_0$ and consider $B_r(p) \subset B_{\frac{5}{4}} \setminus \text{spt}T_0$. In the first part of the proof, we analyse the action of the first variation on a vector field of the type $\eta N$, where $\eta \in C^1_c(B_r(p))$, $\eta \geq 0$.
Let $d(\cdot)=\text{dist}(\cdot, \partial B_1)$, where $\text{dist}$ is the signed distance, taken to be positive in $B_1$ and negative in $B_2 \setminus \overline{B_1}$. Note that in any tubular neighbourhood of $\partial B_1$ we have that $d$ is smooth and its gradient is $N$. 
Given $\epsilon>0$, let $f_\epsilon: \R \to \R$ be a $C^1$ function such that $f_\epsilon\equiv 0$ on $[2\epsilon, \infty)$, $f_\epsilon\equiv 1$ on $(-\infty, \epsilon]$ and $f^\prime \leq 0$. We consider the following one-sided ($s\in [0,s_0]$, with $s_0>0$ sufficiently small, depending on $\epsilon$) one-parameter family of diffeomorphisms:
\[\phi_s(z)=z+ s\,\eta(z)(f_\epsilon\circ d)(z)\,N(z).\]
The reason for the one-sided restriction, $s\geq 0$, is that we need to ensure that we stay in the admissible class when deforming via $\phi_s$, which we check next.

Since $\partial S = T_0$, and $\text{spt}S \subset \overline{B_1}$, by the conditions on $\phi_s$ we also have $\partial (\phi_s)_\sharp S = T_0$ and $\text{spt}(\phi_s)_\sharp S \subset \overline{B_1}$. 
On one hand we have $S + \langle \llbracket F \rrbracket, |x|=1^+\rangle = \partial \llbracket F \cap B_1\rrbracket$, therefore (for any $\sigma \in [0,s_0]$)
\[(\phi_\sigma)_\sharp S +  (\phi_\sigma)_\sharp \langle \llbracket F \rrbracket, |x|=1^+\rangle =  \partial (\phi_\sigma)_\sharp\llbracket F \cap B_1\rrbracket.\]
On the other hand, letting $\Phi(s,z) = \phi_s(z)$ for $s\in [0,\sigma]$ (this is a homotopy between the identity $\phi_0$ and $\phi_\sigma$ on $B_2$) we obtain, from the homotopy formula, 
\[(\phi_\sigma)_\sharp \langle \llbracket F \rrbracket, |x|=1^+\rangle - \langle \llbracket F \rrbracket, |x|=1^+\rangle = \partial  \Big( \Phi_\sharp([0,\sigma] \times \langle \llbracket F \rrbracket, |x|=1^+\rangle)\Big) .\]
Next we check that $-\Phi_\sharp([0,\sigma] \times \langle \llbracket F \rrbracket, |x|=1^+\rangle)$ is a Caccioppoli set. Note that $\Phi(s, \cdot)$ only acts on $z\in \partial B_1$ in this case. The map $\Phi|_{[0,\sigma]\times \partial B_1} : [0,\sigma]\times \partial B_1 \to B_1$ is Lipschitz and orientation-reversing wherever its differential is injective, moreover it is injective on the set where its differential is non-degenerate. Therefore, since $[0,\sigma] \times \langle \llbracket F \rrbracket, |x|=1^+\rangle$ is a Caccioppoli set in $\R \times \partial B_1$, so is its negative pushforward (e.g.~by employing the image formula for integral currents, see e.g.~\cite[p.~149]{GMS} or \cite[26.21(2)]{Simon_notes}). We finally note that $-\Phi_\sharp([0,\sigma] \times \langle \llbracket F \rrbracket, |x|=1^+\rangle)$ is disjoint from $ (\phi_\sigma)_\sharp\llbracket F \cap B_1\rrbracket$. Indeed, $\Phi \Big([0,\sigma]\times \partial B_1\Big)$ is contained in $\big\{x\in B_1 : \big|x - \frac{x}{|x|}\big| \leq \sigma \eta(\frac{x}{|x|})\big\}$, while the image $\phi_\sigma (B_1)$ is contained in $\big\{x\in B_1 : \big|x - \frac{x}{|x|}\big| > \sigma \eta(\frac{x}{|x|})\big\}$.
We can therefore conclude that 
\[ (\phi_\sigma)_\sharp S + \langle \llbracket F \rrbracket, |x|=1^+\rangle  =  \partial \llbracket\tilde{F}_\sigma\rrbracket\]
where $\tilde{F}_\sigma$ is the Caccioppoli set \[\tilde{F}_\sigma=(\phi_\sigma)_\sharp\llbracket F \cap B_1\rrbracket - \Phi_\sharp([0,\sigma] \times \langle \llbracket F \rrbracket, |x|=1^+\rangle).\]

Recalling that $F$ and $E_0$ agree in $B_2 \setminus B_1$, and since $\tilde{F}_\sigma \subset B_1$, we set 
\[F_\sigma = \tilde{F}_\sigma \cup \big( F \cap (B_2 \setminus B_1) \big)\]
and conclude that (the following is an identity between currents in $B_2$)
\[ (\phi_\sigma)_\sharp S + (\partial \llbracket E_0 \rrbracket) \res (B_2 \setminus B_1)  =  \partial \llbracket F_\sigma \rrbracket,\]
with $F_\sigma$ a set of finite perimeter in $B_2$ that coincides with $E_0$ in $B_2 \setminus B_1$ (that is, it is in the admissible class).

\medskip

The previous conclusion permits to use the minimising property of $F$, as we are allowed to compare the energy with that of $F_\sigma$ (for any $\sigma \in [0, s_0]$ --- $s_0$ depends on $\epsilon$). 
For $\epsilon>0$ fixed, we can write (from the minimising property)
\begin{equation}
\label{eq:DuFu1}
0\leq \lim_{\sigma \to 0^+} \frac{J_\lambda(F_\sigma) - J_\lambda(F)}{\sigma}  = \int_{\mathcal{S}} \text{div}_{\mathcal{S}} \big(\eta (f_\epsilon\circ d) N \big) d\mathcal{H}^n + \lambda \int_{\mathcal{S}} \nu_F \cdot \big(\eta (f_\epsilon\circ d) N \big) d\mathcal{H}^n.
\end{equation}
This equality is justified as follows. First, as by construction
\[\text{Per}_{B_2}(F_\sigma) - \text{Per}_{B_2}(F) = \mathbb M((\phi_\sigma)_\sharp S) - \mathbb M(S),\]
we can use the well-known formula for the first variation of $n$-area, which gives the first term on the right-hand-side of \eqref{eq:DuFu1}. Next we observe that, denoting by $dx$ the $(n+1)$-form $dx^1 \wedge \ldots \wedge dx^{n+1}$ and by $x=(x_1, \ldots, x_{n+1})$, and since (by Cartan's formula, denoting by $\mathfrak{L}$ the Lie derivative) $d(\iota_x dx) = \mathfrak{L}_x dx = (n+1) dx$, we have

\[|F_\sigma| - |F| =\big(\llbracket F_\sigma\rrbracket - \llbracket F \rrbracket \big)(dx) = \frac{1}{n+1}\partial \big(\llbracket F_\sigma\rrbracket - \llbracket F \rrbracket \big)(\iota_{x}dx) = \frac{1}{n+1}\big((\phi_\sigma)_\sharp S - S\big)(\iota_{x}dx)\]
\[=\frac{1}{n+1}\partial \big(\Phi_\sharp ([0,\sigma] \times S)\big)(\iota_{x}dx) =  \big(\Phi_\sharp ([0,\sigma] \times S)\big)(dx).\]
Then by direct computation (using the image formula \cite[p.149]{GMS}, \cite[26.21(2)]{Simon_notes}, together with the fundamental theorem of calculus)
\[\frac{d}{d \sigma}\Big|_{\sigma=0^+} \big(\Phi_\sharp ([0,\sigma] \times S)\big)(dx) =\big(\Phi_\sharp (\{0\} \times S)\big)(\iota_{d\Phi(\frac{\partial}{\partial s}) }dx) = \]
\[=S(\iota_{\eta (f_\epsilon \circ d) N }dx) = -\int_{\mathcal{S}} \nu_F \cdot (\eta(f_\epsilon \circ d) N) d\mathcal{H}^n,\]
which completes the proof of \eqref{eq:DuFu1}.

The next argument follows \cite[Theorem 4.1]{DuFu} verbatim. We check that the right-hand-side of \eqref{eq:DuFu1} is independent of $\epsilon$. Indeed, for $\epsilon'<\epsilon$ we consider
\[\psi_s(z) = z + s\eta(z) \big((f_\epsilon \circ d)(z) -(f_{\epsilon'} \circ d)(z)\big) N(z).\]
This is (for $s\in (-\delta, \delta)$ with $\delta>0$ sufficiently small, depending on $\epsilon'$) a (two-sided) one-parameter family of diffeomorphisms, equal to the identity in a neighbourhood of $\partial B_1$. We can then use the vanishing of the first variation under the deformation induced by $\psi_s$, that is,
\[\int_{\mathcal{S}} \text{div}_{\mathcal{S}}\Big(\eta(z) \big((f_\epsilon \circ d)(z) -(f_{\epsilon'} \circ d)(z)\big) N(z)\Big) d\mathcal{H}^n(z)\]
\[+\lambda\int_{\mathcal{S}} \nu_F(z) \cdot \Big(\eta(z) \big((f_\epsilon \circ d)(z) -(f_{\epsilon'} \circ d)(z)\big) N(z)\Big)d\mathcal{H}^n(z)=0.\]
The linearity of divergence, scalar product and integration then implies that the right-hand-side of \eqref{eq:DuFu1} is independent of $\epsilon$.

By the sign condition in \eqref{eq:DuFu1}, and viewing the right-hand-side of \eqref{eq:DuFu1} as the action of a distribution on $C^1_c$, there exists a (positive) Radon measure $\mathcal{M}$ in $B_2$ such that the right-hand-side of \eqref{eq:DuFu1} is given by $\int \eta d\mathcal{M}$. (A priori this distribution should depend on $\epsilon$, however we have proved that the action is independent of $\epsilon$.)

On the other hand, sending $\epsilon \to 0$ on the right-hand-side of \eqref{eq:DuFu1} (denoting by $\nabla_{\mathcal{S}} = \text{proj}_{T \mathcal{S}} \nabla$ the gradient on $\mathcal{S}$, a.e.~well-defined), we obtain:
\[\int_{\mathcal{S}}  (f_\epsilon\circ d)\, \nabla_{\mathcal{S}} \eta \cdot N d\mathcal{H}^n \to \int_{\mathcal{S} \cap \partial B_1}  \nabla_{\mathcal{S}} \eta \cdot N \, d\mathcal{H}^n =0,\]
where the last equality follows from the fact that $\nabla_{\mathcal{S}} \eta \cdot N =0$ a.e.~on $\mathcal{S} \cap \partial B_1$; 
\[\int_{\mathcal{S}}  (f_\epsilon\circ d) \eta\, \text{div}_{\mathcal{S}} N   d\mathcal{H}^n \to \int_{\mathcal{S} \cap \partial B_1}  \eta \, \text{div}_{\mathcal{S}} N \, d\mathcal{H}^n;\]
\[ \int_{\mathcal{S}} \eta\nabla_{\mathcal{S}}  (f_\epsilon\circ d) \cdot N  d\mathcal{H}^n =  \int_{\mathcal{S}} \eta (f_\epsilon^\prime\circ d) |\nabla_{\mathcal{S}}d|^2  d\mathcal{H}^n \leq 0,\]
where we used $\nabla d = N$ on the support of $f_\epsilon$;
\[\int_{\mathcal{S}} \nu_F \cdot \big(\eta (f_\epsilon\circ d) N \big) d\mathcal{H}^n \to  \int_{\mathcal{S}\cap \partial B_1} \eta\, \nu_F \cdot N  \,d\mathcal{H}^n.\]
These imply (expanding the divergence in \eqref{eq:DuFu1}) 
\[\int \eta d\mathcal{M}\leq \int_{\mathcal{S} \cap \partial B_1} \eta \text{div}_{\mathcal{S}} N   d\mathcal{H}^n + \lambda\int_{\mathcal{S} \cap \partial B_1}  \eta (\nu_F \cdot N)  d\mathcal{H}^n\]
is valid for all $\eta \in C^1_c(B_r(p))$, $\eta \geq 0$, hence 
\[\hat{\mathcal{M}}=\Big(\text{div}_{\mathcal{S}} N + \lambda (\nu_F \cdot N) \Big)d\mathcal{H}^n \res(\mathcal{S} \cap \partial B_1)\]
is a (positive) Radon measure. The first variation of $V$ (with respect to $J_\lambda$) computed on the test vector field $\eta N$ can be decomposed as the sum of the first variation computed on $\eta (f_\epsilon\circ d) N$ and on $\eta (1-(f_\epsilon\circ d)) N$. The latter contribution gives $0$ since $\eta (1-(f_\epsilon\circ d)) N \in C^1_c(B_1; \R^{n+1})$. Therefore the first variation of $V$ on $\eta N$ gives just \eqref{eq:DuFu1}, that is, is given by $\int \eta d\mathcal{M}$, and we have seen that $0\leq \mathcal{M} \leq \hat{\mathcal{M}}$. 

\medskip

In the first part of the proof we analysed the action of the first variation of $V$ (with respect to $J_\lambda$) on a vector field of the form $\eta N$, for $\eta \in C^1_c(B_r(p))$, $\eta \geq 0$.
Now, in the second part of the proof, we consider instead the action on a vector field $Y \in C^1_c(B_r(p); \R^{n+1})$ such that $Y \cdot N =0$. We note that in this case we are able to consider a two-sided deformation induced by $Y$, which will lead to a vanishing condition, see \eqref{eq:angular_directions} below, rather than an inequality as in \eqref{eq:DuFu1} (where we only had a one-sided deformation at our disposal).

Let $\psi_s$ be the flow of $Y$, that is, the one-parameter (two-sided) family of diffeomorphisms obtained by solving the ODE for each trajectory, $\frac{d}{ds}\Psi(s,x) = Y(x)$, with initial condition $\Psi(0,x)=x$, and setting $\psi_s(x)=\Psi(s,x)$. Then $\psi_s(B_1) \subset B_1$ and we consider $\tilde{F}_s = \psi_s(F \cap B_1)$. These are Caccioppoli sets with support in $\overline{B_1}$ and such that $\partial^*\tilde{F}_s = \psi_s(\partial^* F)$ is a.e.~contained in $B_1$. The Caccioppoli set $F_s=\tilde{F}_s \cup \big(F\cap (B_2 \setminus B_1) \big)$ is in the admissible class. We need to show that its boundary (as a current) is $(\psi_s)_\sharp S + (\partial \llbracket E_0\rrbracket) \res (B_2 \setminus B_1)$. The immediate expression for this boundary is $(\psi_s)_\sharp (\partial \llbracket F \cap B_1 \rrbracket )+ \partial \llbracket E_0 \cap (B_2 \setminus B_1)\rrbracket.$  Recalling that $S= \partial \llbracket F\cap B_1 \rrbracket - \langle \llbracket E_0\rrbracket, |x|=1^+\rangle$ we arrive at
\[(\psi_s)_\sharp S + (\psi_s)_\sharp \langle \llbracket E_0\rrbracket, |x|=1^+\rangle + (\partial \llbracket E_0\rrbracket)  \res (B_2 \setminus B_1)- \langle \llbracket E_0\rrbracket, |x|=1^+\rangle.\]
As $\Psi(t,z)$, for $(t,z) \in [0,s] \times B_2$ is a homotopy joining the identity $\psi_0$ to $\psi_s$, we will use the homotopy formula. We note that $\Psi(t,z)=z$ in a neighbourhood of $T_0 =- \partial \langle \llbracket E_0\rrbracket, |x|=1^+\rangle$, so that $\Psi_\sharp([0,s]\times \partial \langle \llbracket E_0\rrbracket, |x|=1^+\rangle)=0$. Moreover, $\Psi([0,s]\times \partial B_1) \subset \partial B_1$, so that $\Psi_\sharp([0,s]\times \langle \llbracket E_0\rrbracket, |x|=1^+\rangle)=0$ (as an $(n+1)$-current). The homotopy formula then gives $(\psi_s)_\sharp \langle \llbracket E_0\rrbracket, |x|=1^+\rangle =\langle \llbracket E_0\rrbracket, |x|=1^+\rangle$ and therefore (the following is an identity between currents in $B_2$)
\[\partial \llbracket F_s\rrbracket = (\psi_s)_\sharp S + (\partial \llbracket E_0\rrbracket)  \res (B_2 \setminus B_1).\]
We can therefore use the minimising condition to write the standard condition for the vanishing of the first variation (with respect to $J_\lambda$) as
\begin{equation}
\label{eq:angular_directions}
\int_{\mathcal{S}} \text{div}_{\mathcal{S}}Y d\mathcal{H}^n + \lambda \int_{\mathcal{S}} \nu_F \cdot Y d\mathcal{H}^n=0.
\end{equation}

\medskip

For the third (and final) part of the proof, given an arbitrary vector field $X \in C^1_c(B_r(p);\R^{n+1})$ we write the orthogonal decomposition $X = X^T + X^N$, where $X^N=(X\cdot N) N$ and both $X^T$ and $X^N$ are $C^1_c(B_r(p); \R^{n+1})$. Then the first variation of $J_\lambda$ on $X$ is given by the sum of the two actions on $X^T$ and $X^N$. For the former, in view of \eqref{eq:angular_directions} the action is $0$.
For the latter, we have that $X^N = \eta_+ N - \eta_- N$, where $\eta_+, \eta_- \geq 0$ and $\eta_+ = (X \cdot N)^+$,  $\eta_- = (X \cdot N)^-$. By the conclusion in the first part (applied separately to $\eta_+ N$ and $\eta_- N$, using the linearity of the first variation), we then have that the action is given by $\int (\eta_+ - \eta_-) d\mathcal{M} = \int (X\cdot N) d\mathcal{M}$.  
\end{proof}

As remarked in the example given in Remark \ref{oss:example_ball}, for $\lambda>n$ one may actually have $\mathcal{M} \neq 0$. If $\lambda \leq n$, on the other hand, we obtain the following result (this is analogous to \cite[Theorem 7.1]{DuFu}).

\begin{lem}
\label{lem:stationarity}
Let $\lambda \leq n$. Then $\mathcal{M}=0$, that is, $V$ is stationary (with respect to $J_\lambda$) in $B_2 \setminus \text{spt}T_0$.
\end{lem}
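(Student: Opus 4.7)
The plan is to combine the one-sided estimate of Lemma \ref{lem:sign_first_var},
\[0 \leq \mathcal{M} \leq \big(\text{div}_{\mathcal{S}} N + \lambda (\nu_F \cdot N)\big)\,d\mathcal{H}^n \res (\mathcal{S} \cap \partial B_1),\]
with a geometric computation showing that the bounding density is non-positive $\mathcal{H}^n$-a.e.~on $\mathcal{S}\cap \partial B_1$. Since $\mathcal{M}\geq 0$, this will force $\mathcal{M}=0$. Throughout I would work with $\lambda \geq 0$, as per the earlier remark reducing the case $\lambda<0$ to $\lambda>0$ via complementation.

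The first and main step is to argue that, for $\mathcal{H}^n$-a.e.~$x \in \mathcal{S} \cap \partial B_1$, the approximate tangent plane $T_x\mathcal{S}$ coincides with $T_x\partial B_1$. This follows from the containment $\mathcal{S} \subset \overline{B_1}$ (itself a consequence of $\text{spt}\,V \subset \overline{B_1}$): blowing up at $x$, the rescaled sets $r^{-1}(\overline{B_1}-x)$ converge locally to the closed half-space $\{y\in\R^{n+1}: y\cdot x \leq 0\}$, whereas at a rectifiable point the rescaled $r^{-1}(\mathcal{S}-x)$ converge (as varifolds and in Hausdorff distance) to $T_x\mathcal{S}$. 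Containment of the latter in the former, together with the fact that $T_x\mathcal{S}$ is a linear subspace, forces $T_x\mathcal{S}\subset \{y\cdot x=0\}=T_x\partial B_1$; equality then follows by dimension.

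With the tangent-plane coincidence in hand, I would compute the bounding density directly. Differentiating $N(z)=-z/|z|$ gives, at $|x|=1$,
\[\text{div}_{\mathcal{S}} N(x) = -n + |\pi_{T_x\mathcal{S}}(x)|^2,\]
with $\pi_{T_x\mathcal{S}}$ the orthogonal projection onto $T_x\mathcal{S}$. When $T_x\mathcal{S}=T_x\partial B_1$, the vector $x$ is perpendicular to $T_x\mathcal{S}$ and the projection vanishes, leaving $\text{div}_{\mathcal{S}} N(x) = -n$. Moreover, $\nu_F$ is a unit normal to $T_x\mathcal{S}=T_x\partial B_1$, so $\nu_F = \pm x = \mp N(x)$, whence $\nu_F \cdot N \in \{-1,+1\}$. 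For $0 \leq \lambda \leq n$ one then has
\[\text{div}_{\mathcal{S}} N + \lambda (\nu_F \cdot N) \leq -n + \lambda \leq 0\]
$\mathcal{H}^n$-almost everywhere on $\mathcal{S} \cap \partial B_1$. Substituting into the estimate of Lemma \ref{lem:sign_first_var} yields $\mathcal{M}\leq 0$, and combined with $\mathcal{M}\geq 0$ this gives $\mathcal{M}=0$.

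The only non-routine point in the plan is the tangent-plane coincidence, which uses that $\mathcal{S}$ is countably $n$-rectifiable (since $F$ has finite perimeter) and the compatibility of the blow-up characterisation of the approximate tangent plane with the one-sided constraint $\mathcal{S} \subset \overline{B_1}$. Everything else is a direct computation, and the condition $\lambda \leq n$ is precisely the threshold at which the bound $-n + \lambda$ first becomes non-positive, in harmony with the example in Remark \ref{oss:example_ball} showing that $\mathcal{M}$ can be non-trivial when $\lambda > n$.
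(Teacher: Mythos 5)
Your proof is correct and follows essentially the same line as the paper: plug the pointwise geometry of $\mathcal{S}\cap\partial B_1$ into the two-sided bound from Lemma \ref{lem:sign_first_var}. You are in fact a touch more careful than the paper's write-up in two respects: you supply the blow-up argument for the tangent-plane coincidence $T_x\mathcal{S}=T_x\partial B_1$ (stated without justification in the paper), and you allow $\nu_F\cdot N=\pm 1$ rather than asserting $\nu_F=N$, observing that for $\lambda\geq 0$ the sign is irrelevant to the conclusion.
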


\begin{proof}
We have $N = \nu_F$ a.e.~on $\mathcal{S} \cap \partial B_1$ and $\text{div}_{\mathcal{S}} N = \text{div}_{\partial B_1} N$ a.e.~on $\mathcal{S} \cap \partial B_1$. By explicit computation $\text{div}_{\partial B_1} N = -n$ (where $n$ is the mean curvature of $\partial B_1$). Then the inequality $0\leq \mathcal{M} \leq \Big(\text{div}_{\mathcal{S}} N + \lambda (\nu_F \cdot N) \Big)d\mathcal{H}^n \res(\mathcal{S} \cap \partial B_1)$ obtained in Lemma \ref{lem:sign_first_var} becomes $0\leq \mathcal{M} \leq (\lambda-n)d\mathcal{H}^n \res(\mathcal{S} \cap \partial B_1)$. Thus with $\lambda \leq n$ we must have $\mathcal{M}=0$ (and if $\lambda<n$ also $\mathcal{H}^n \big(\mathcal{S} \cap \partial B_1\big)=0$). 
\end{proof}

Having established this stationarity property, in order to obtain Theorem \ref{thm:variational} we move on to the regularity of the minimiser, focusing on the case $\lambda < n$. We note immediately that, while the regularity in $B_1$ follows from the theory of minimisers, we may a priori have that $\text{spt}V \cap \partial B_1 \neq \emptyset$, and said theory is not applicable at these points. We will instead employ the regularity theory for stable CMC (or prescribed-mean-curvature) hypersurfaces \cite{BeWi1, BeWi2}, in view of which we recall some relevant notions.

We say that $p\in \text{spt}\,V$ is a classical singularity of an integral $n$-varifold $V$ in $\mathbb{R}^{n+1}$ when there exists an open ball $B^{n+1}_r(p)$ such that $\text{spt}\,V \cap B_r^{n+1}(p)$ is equal to the union of three or more hypersurfaces-with-boundary, all having a common boundary, all having $C^{1,\alpha}$-regularity up to the boundary, and with $p$ in the common boundary, and with at least two of the hypersurfaces-with-boundary meeting transversely at $p$.

Given an integral $n$-varifold $V$ in $\mathbb{R}^{n+1}$ we denote by $\text{gen-reg}\,V$ the set of points $p$ for which there exists an open ball $B^{n+1}_r(p)$ such that $\text{spt}\,V \cap B_r^{n+1}(p)$ is either a single $C^2$ embedded disc, or the union of two (distinct) $C^2$-embedded discs that lie on one side of each other and whose intersection contains $p$.

\begin{lem}
\label{lem:obstacle}
Let $\lambda<n$ and $V$, $F$ as above. Then $\text{spt}V  \setminus \text{spt}T_0 \subset B_1$. Moreover, there exists $\Sigma \subset B_1$ with $\text{dim}_{\mathcal{H}} \Sigma \leq n-7$ such that $(\text{spt}V  \setminus \text{spt}T_0) \setminus \Sigma$ is a smoothly embedded CMC hypersurface (with mean curvature vector $\lambda \nu_F$). If $n=7$, more precisely, $\Sigma$ is made of isolated points (possibly accumulating onto $\text{spt}T_0$).
\end{lem}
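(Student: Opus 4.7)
The plan is to split the analysis into two regimes: the open ball $B_1$, where minimisation of $J_\lambda$ gives regularity via the classical theory, and the sphere $\partial B_1 \setminus \text{spt}T_0$, where I intend to show $\text{spt}V$ cannot touch at all. All the analytic ingredients I need have essentially been prepared in the discussion immediately preceding the lemma.

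First, inside $B_1$, the set $F$ minimises $J_\lambda$ against competitors supported in $B_1$, so I would invoke the standard regularity theory for minimisers of the prescribed-mean-curvature functional (as referenced in \cite{Mag, GMT, BeWi1}): this produces a closed set $\Sigma \subset B_1$ with $\text{dim}_{\mathcal{H}} \Sigma \leq n-7$ off which $\partial^* F$ is a smoothly embedded CMC hypersurface with mean curvature vector $\lambda \nu_F$, with $\Sigma$ empty if $n\leq 6$ and discrete if $n=7$. This part is routine and requires no adaptation.

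The substantive step is to rule out any $p \in \partial B_1 \cap \text{spt}V \setminus \text{spt}T_0$. Arguing by contradiction, I would collect the four facts that the paragraphs before the lemma have just supplied: $V$ is stationary with respect to $J_\lambda$ in a neighbourhood of $p$ (by Lemma \ref{lem:stationarity}); stability on test functions compactly supported in $B_1$ is immediate from minimality, and combined with $\text{gen-reg}\,V \cap (\partial B_1 \setminus \text{spt}T_0) = \emptyset$ this establishes the stability hypothesis needed in \cite{BeWi1, BeWi2}; the unique tangent cone of $V$ at $p$ is a multiplicity-$k$ copy of the tangent hyperplane $T_p \partial B_1$; and classical singularities are absent from $\text{spt}V \setminus \text{spt}T_0$. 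These are precisely the hypotheses of the sheeting theorem of \cite{BeWi1, BeWi2}, which then produces a neighbourhood of $p$ in $\text{spt}V$ consisting of $k$ graphs of class $C^{1,\alpha}$ (hence $C^2$ by CMC elliptic regularity) over the tangent hyperplane. In particular $p$ would lie in $\text{gen-reg}\,V$, contradicting the maximum-principle exclusion already recorded ($\lambda<n$ versus mean curvature $n$ of $\partial B_1$ with respect to the inward normal). So $\text{spt}V \cap (\partial B_1 \setminus \text{spt}T_0)$ is empty.

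The hard step is the application of the sheeting theorem: the claim rests on verifying that the stability used in \cite{BeWi1, BeWi2} is truly available here, and that the planar multiplicity-$k$ tangent cone is compatible with the theorem's hypotheses. The preceding discussion has already assembled the key pieces (stationarity with respect to $J_\lambda$, the absence of classical singularities, and the crucial observation that any tangent at such a $p$ must be supported on the tangent hyperplane to $\partial B_1$), so the argument is a matter of carefully matching hypotheses rather than developing new machinery. Once $\text{spt}V \setminus \text{spt}T_0 \subset B_1$ is established, the interior $\Sigma$ from the first step is the desired singular set, with the understood caveat that $\Sigma$ may accumulate on $\text{spt}T_0$ since we make no claim near the prescribed boundary (as remarked after Theorem \ref{thm:variational}, this is separately controlled by Allard's boundary regularity theorem when $T_0$ is smooth).
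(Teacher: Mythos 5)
Your proof is correct and follows the same route as the paper's: the sheeting theorem of \cite{BeWi1, BeWi2} (whose hypotheses were verified in the preceding discussion) shows any $p \in \text{spt}V \cap \partial B_1 \setminus \text{spt}T_0$ would be a $C^2$ CMC sheeted point, the maximum principle against $\partial B_1$ (mean curvature $n > \lambda$) excludes this, and then interior regularity theory for minimisers finishes in $B_1$. The only cosmetic difference is the ordering (you treat the interior first, the paper treats the sphere first); the mathematical content is identical.
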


\begin{proof}
If $p\in \partial B_1 \cap \text{spt}V \setminus \text{spt}T_0$ is a point in $\text{gen-reg}\,V$, then by definition there exists an embedded disc $D \subset \text{spt}V \setminus \text{spt}T_0 \subset \overline{B_1}$ of class $C^2$ with $p \in D$. The $C^2$ regularity of $D$ and the stationarity of $V$ with respect to $J_\lambda$ (Lemma \ref{lem:stationarity}) imply that $D$ is CMC with mean curvature $\lambda$. (We remark that, by Allard's regularity theorem (\cite{All_interior}) and standard elliptic PDE regularity, there exists a dense open subset of $\text{spt}\,V$ that is smoothly embedded with mean curvature $\lambda$. It follows that, in the case in which the local structure of $\text{spt}\,V$ around $p$ is the union of two distinct $C^2$ embedded discs, the $C^2$ regularity of each disc implies that both discs have mean curvature $\lambda$.) The maximum principle gives a contradiction if $\lambda < n$ (since $n$ is the mean curvature of $\partial B_1$ with respect to the inward normal to $B_1$). This means that if $\lambda <n$ then $\text{gen-reg}\,V \cap (\partial B_1 \setminus \text{spt}T_0) = \emptyset$.

In other words, $\text{gen-reg}\,V \setminus \text{spt}T_0 \subset B_1$. In the (open) ball $B_1$ we are able to use the minimising assumption to further conclude that $\text{gen-reg}\,V \setminus \text{spt}T_0$ is a $C^2$ embedded hypersurface (that is, only the first occurrence in the definition of $\text{gen-reg}\,V$ can happen). This follows e.g.~from density estimates (see e.g.~\cite[Theorem 21.11]{Mag}). The minimising assumption also implies that $\text{gen-reg}\,V\setminus \text{spt}T_0$ (as a $C^2$ embedded hypersurface) is stable with respect to $J_\lambda$.

We further note that for $p \in \partial B_1 \cap \text{spt}V \setminus \text{spt}T_0$ the varifold $V$ has a unique tangent cone at $p$, given by the hyperplane that is tangent to $\partial B_1$ at $p$, possibly counted with integer multiplicity. The existence of tangent cones, and the fact that any such cone is a stationary varifold, both follow from the monotonicity-type formula for the mass, valid thanks to the stationarity with respect to $J_\lambda$. Since $\text{spt}V \subset \overline{B_1}$, any such tangent cone must be contained in a half-space (whose boundary is the tangent to $\partial B_1$ at $p$), and thus it has to be supported on that tangent hyperplane itself (see e.g.~\cite{Simon_notes}), from which the claim follows (thanks to the constancy theorem \cite{Simon_notes}).

Finally, we note the absence of classical singularities in $\text{spt}V  \setminus \text{spt}T_0$. In $B_1$, this is a consequence of the minimising property, while at any $p \in \partial B_1 \cap \text{spt}V \setminus \text{spt}T_0$ we have proved that the tangent has to be supported on a hyperplane (which rules out that $p$ could be a classical singularity).

Having checked all hypotheses, we can now apply the sheeting results from \cite{BeWi1} or \cite{BeWi2}, namely \cite[Theorems 3.1 and 3.3]{BeWi1} or \cite[Theorems 6.2 and 6.4]{BeWi2}. We conclude that, if $p \in \text{spt}V \cap \partial B_1 \setminus \text{spt}T_0$ then $\text{spt}V$ is, in a suitable coordinate system in a neighbourhood of $p$, given by the union of (finitely many) ordered $C^2$ graphs (each giving an embedded $C^2$ disc with constant mean curvature $\lambda$), and in particular $p \in \text{gen-reg}\,V$, contradicting the earlier conclusion that $\text{gen-reg}\,V \setminus \text{spt}T_0 \subset B_1$. (Alternatively, one may directly use the maximum principle, the fact that $\partial B_1$ has mean curvature $n$, and the condition $\lambda<n$, to find a contradiction.)

We thus conclude (in a first instance) that $\text{spt}V  \setminus \text{spt}T_0 \subset B_1$. At this stage one may either use the standard regularity theory for minimisers (e.g.~\cite[Theorem 21.8]{Mag} in conjunction with standard elliptic regularity) or alternatively \cite[Corollary 2.1]{BeWi1} or \cite[Corollary 1.1]{BeWi2}, for the remaining conclusions.
\end{proof}

\begin{oss}
We expect that the same regularity conclusions should hold for $\lambda=n$, albeit with the possibility that open subsets of $\partial B_1$ may be contained in $\text{spt} V \setminus \text{spt} T_0$, as in the example of Remark \ref{oss:example_ball}.
\end{oss}

\section{Regular minimal cones, graphs, Jacobi operator}\label{Jacobi fields section}

In Section \ref{sing_max_princ} we will prove Proposition \ref{Prop:sing_max_princ}, an instance of a singular maximum principle for CMC hypersurfaces, which will then be needed in Section \ref{approx}. In this section we collect some preliminaries on stable minimal cones and their Jacobi fields that will be needed in Section \ref{sing_max_princ}.

In what follows let $C$ be a regular cone that is also minimal. We recall that the notion of regular cone means that
$C = \{r y : r\geq 0, \: y \in \Sigma \},$
where $\Sigma$ (the link of $C$) is a smooth embedded compact $(n-1)$-dimensional submanifold of the unit sphere $S^n$. The minimality condition is the vanishing of the mean curvature of $C \setminus \{0\}$ (as a submanifold of $\R^{n+1}$). (This requirement is equivalent to the minimality of $\Sigma$ as a submanifold of $S^n$, see \cite{Simons}). We first recall some facts about graphs over $C$ and their mean curvature operator. 

Let $C = \partial \llbracket E \rrbracket,$ for a set\footnote{In the forthcoming sections, any minimal regular cone $C$ will arise automatically as a boundary. However any regular minimal cone has connected link $\Sigma$ (by a standard application of the maximum principle) and using this one shows that $S^n \setminus \Sigma$ has two connected components (by Alexander's duality), thus so does $\R ^{n+1} \setminus C,$ therefore there always exists $E$ such that $C = \partial \llbracket E \rrbracket$.} of locally finite perimeter $E \subset \R ^{n+1}$.  The graph of $u \in C ^2(C_1 ; \R)$ over $C_1 = (C \setminus \{0\}) \cap B_1$ is defined to be 
\[\text{gr}_C u = \{ x + u(x) N(x) : x \in C_1\},\] 
where $N$ is the inward pointing unit normal on $C \setminus \{0\}.$ 
We will be interested in functions $u$ that satisfy the following radial decay 
\begin{equation}
\label{eq:radial decay}
    \dfrac{|u(x)|}{|x|} + |\nabla u(x)| + |x| |\nabla ^2 u (x)| \xrightarrow[|x| \to 0]{} 0,
\end{equation}
where $\nabla$ denotes the Levi-Civita connection on $C\setminus\{0\}$ with respect to the Riemannian metric induced on $C\setminus\{0\}$ by the Euclidean one in $\R^{n+1}$, and $|\cdot|$ is taken with respect to the Euclidean inner product. 

We remark that there exists $M=M_\Sigma$ such that, if
\begin{equation}
    \label{eq: embeddedness est} 
    \dfrac{|u(x)|}{|x|} + |\nabla u(x)| \leq M
\end{equation}
is valid for all $x\in C_1$ then $\text{gr}_C u$ is an embedded hypersurface, with $\{0\}=(\overline{\text{gr}_C u} \setminus \text{gr}_C u) \cap B_1$ an isolated singularity when $C$ is not a hyperplane. We will assume in this section that \eqref{eq: embeddedness est} is satisfied on $C_1$. We further note that \eqref{eq:radial decay} implies the validity of \eqref{eq: embeddedness est} for all $0< |x| < r$ for sufficiently small $r$, and therefore, after rescaling, $\tilde{u}(x)=u\big(\frac{x}{r}\big)$ satisfies $\dfrac{|\tilde{u}(x)|}{|x|} + |\nabla \tilde{u}(x)| \leq M$ on $C_1$. (This fact will be implicitly used in Section \ref{sing_max_princ}.)

Assume now that the associated current to $\text{gr}_C u$ is of the form $\partial \llbracket F\rrbracket \res B_1$ \footnote{In what follows every graph of the form $\text{gr}_C u$ will arise as a boundary of a set of finite perimeter. However, since $\text{gr}_C u$ is embedded the map $G(x) = x + u(x) N(x)$ is a diffeomorphism to its image and, since $C_1$ is a boundary, $\R ^{n+1} \setminus C_1$ has two connected components thus so does $\R ^{n+1}\setminus G(C_1)$ therefore there always exist a set $F$ such that the associated current to $\text{gr}_C u$ is of the form $\partial \llbracket F \rrbracket \res B_1$.}, where $F$ is a set of finite perimeter and that $F$ is a critical point of $J_\lambda$ thus in particular we have that
\[\dfrac{d}{dt}\bigg|_{t=0} J_\lambda  (F_t)  =0,\] 
where $F_t$ is the set of finite perimeter whose boundary is $\text{gr}_C (u+ tv)$ and $v \in C^2_c(C_1; \R).$ We recall that the mean curvature operator $\mathcal{M}_C$ of the cone is defined as follows, by defining in duality its action on $u\in C ^2(C_1 ; \R)$:
\[ \dfrac{d}{dt}\bigg |_{t=0} \mathcal{H}^n\big(\text{gr}_C (u +tv)\big) = -< \mathcal{M}_ Cu, v>_{L^2},  \] 
where $< , >_{L^2}$ denotes the $L^2$-inner product on $C\setminus\{0\}$ and $v  \in C^2_c(C_1; \R)$. The PDE that the function $u$ satisfies is given in terms of $\mathcal{M}_C$ as we prove in the following:

\begin{lem} Let $u$ and $\text{gr}_C u$ be as above then 
\begin{equation}\label{eq: PDE of u}
\mathcal{M}_C u = \lambda \text{det}(Id - u A_C),
\end{equation}
 where $A_C$ denotes the second fundamental form of $C_1$.
\end{lem}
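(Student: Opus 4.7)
The plan is to use the vanishing first variation of $J_\lambda$ at $F$ and split it into its perimeter and volume contributions. By the definition of the mean curvature operator $\mathcal{M}_C$ given in the statement, the perimeter piece is
$$\frac{d}{dt}\bigg|_{t=0}\mathrm{Per}(F_t)=\frac{d}{dt}\bigg|_{t=0}\mathcal{H}^n\bigl(\mathrm{gr}_C(u+tv)\bigr)=-\langle\mathcal{M}_C u,v\rangle_{L^2(C)},$$
so the whole task reduces to computing the volume derivative $\tfrac{d}{dt}\big|_{t=0}|F_t|$ and showing that, with the correct sign, it equals $\int_{C_1}v\,\det(\mathrm{Id}-uA_C)\,d\mathcal{H}^n$.

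To do this I would introduce the normal exponential map $\Phi:C_1\times\mathbb{R}\to\mathbb{R}^{n+1}$, $\Phi(x,s)=x+sN(x)$. By \eqref{eq: embeddedness est} (or more precisely its rescaled version noted after it), $\Phi$ is a diffeomorphism from a neighbourhood of the graph of $u$ in $C_1\times\mathbb{R}$ onto its image. Since $\partial_s\Phi=N$ is a unit vector orthogonal to $T_xC$, and $D_x\Phi|_{T_xC}=\mathrm{Id}-sA_C$ in terms of the shape operator $A_C=-DN$, the Jacobian of $\Phi$ is $\det(\mathrm{Id}-sA_C)$ (positive where $\Phi$ is a diffeo). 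Writing the symmetric difference $F_t\triangle F$ as the image under $\Phi$ of $\{(x,s):x\in C_1,\ s\text{ between }u(x)\text{ and }u(x)+tv(x)\}$ and applying the change-of-variables formula together with Fubini yields
$$|F_t|-|F|=\sigma\int_{C_1}\int_{u(x)}^{u(x)+tv(x)}\det\bigl(\mathrm{Id}-sA_C(x)\bigr)\,ds\,d\mathcal{H}^n(x)$$
for a sign $\sigma\in\{\pm 1\}$, whence
$$\frac{d}{dt}\bigg|_{t=0}|F_t|=\sigma\int_{C_1}v(x)\,\det\bigl(\mathrm{Id}-u(x)A_C(x)\bigr)\,d\mathcal{H}^n(x).$$

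To fix $\sigma$ I would use that $N$ is the inward unit normal on $C=\partial\llbracket E\rrbracket$, and that, by \eqref{eq:radial decay}, for small $|x|$ the graph $\mathrm{gr}_Cu$ is as close as desired to $C$, so $F$ is close to $E$ in $B_1$: therefore pushing the boundary in the direction $+N$ (i.e.\ increasing $u$ where $v>0$) moves it into the interior of $E\approx F$ and decreases $|F_t|$, giving $\sigma=-1$. Combining with the critical-point condition $\tfrac{d}{dt}\big|_{t=0}J_\lambda(F_t)=0$ produces
$$0=-\langle\mathcal{M}_Cu,v\rangle_{L^2}+\lambda\int_{C_1}v\,\det\bigl(\mathrm{Id}-uA_C\bigr)\,d\mathcal{H}^n,$$
and since $v\in C^2_c(C_1;\mathbb{R})$ is arbitrary and both sides are continuous (thanks to $u\in C^2$), the fundamental lemma of the calculus of variations gives the pointwise identity $\mathcal{M}_Cu=\lambda\det(\mathrm{Id}-uA_C)$. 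The only subtle point is the sign bookkeeping for the volume derivative; the Jacobian formula itself is a short, standard computation for the normal exponential map of a hypersurface in Euclidean space, and the area-variation side is just the definition of $\mathcal{M}_C$.
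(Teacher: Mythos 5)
Your proof is correct and takes a genuinely different route for the volume-derivative computation. Both you and the paper begin identically: take the first variation $\tfrac{d}{dt}\big|_{t=0}J_\lambda(F_t)=0$ and use the definition of $\mathcal{M}_C$ for the perimeter piece. The divergence is in how the volume term produces $\det(\mathrm{Id}-uA_C)$. The paper writes $\tfrac{d}{dt}|_{t=0}|F_t|=-\int_{\mathrm{gr}_Cu}v\,\hat N\cdot\hat\nu\,d\mathcal{H}^n$, pulls this back to $C$ by the area formula, and then computes the product $\hat N\cdot\hat\nu\,|J_G|$ by writing out the unit normal $\hat\nu$ to the graph explicitly as a ratio of cofactors of the differential of $G(x)=x+u(x)N(x)$; the determinant $\det(\mathrm{Id}-uA_C)$ is the relevant cofactor. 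You instead parametrise the region between the competing graphs by the normal exponential map $\Phi(x,s)=x+sN(x)$, whose Jacobian is $\det(\mathrm{Id}-sA_C)$, and obtain the same determinant from change of variables and Fubini followed by Leibniz differentiation. The two computations are equivalent (they both amount to the observation that the restriction of the normal exponential map to a normal graph has Jacobian $\hat N\cdot\hat\nu\,|J_G|=\det(\mathrm{Id}-uA_C)$), but yours is arguably more conceptual since it sidesteps writing down $\hat\nu$ componentwise, while the paper's is more self-contained since it makes the cofactor structure visible. Your sign bookkeeping ($\sigma=-1$, with $N$ inward) matches what the paper obtains via the first-variation-of-volume formula. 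One small point worth making explicit if you write this up: you need the normal exponential map to be a diffeomorphism on a neighbourhood of $\{(x,s):x\in C_1,\ s=u(x)\}$ that is large enough to contain the region swept out for small $t$; since $|A_C(x)|\sim |x|^{-1}$ near the vertex this is not automatic, but it follows from \eqref{eq: embeddedness est} exactly as you indicate.
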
 

\begin{proof}  Let $G(x) = x + u(x) N(x)$ and consider an extension $\hat N$ of $N$ (defined in an open cone over a tubular neighbourhood of $\Sigma$ in $\mathbb{S}^n$). Then for any $v \in C^2_c(C_1 ; \R)$ we have that 
\[0 = \dfrac{d}{dt}\bigg|_{t=0} J_\lambda(F_t) = -<\mathcal{M}_C u , v >_{L^2} + \lambda \int_{{gr}_C u} v \hat N \cdot \hat \nu d \mathcal{H}^n,\] 
where $F_t$ is the associated set to $\text{gr}_C(u + tv)$, $\hat \nu$ the inward pointing unit normal of $\text{gr}_C u$ and the last term is the derivative of the volume term. Using the area formula the latter can be written as $\int_ C v \hat N \cdot \hat \nu |J_G| d \mathcal{H}^n,$
where $|J_G|$ denotes the Jacobian of $G$. Thus it suffices to compute $ \hat N \cdot \hat \nu |J_G|.$  Let $(\tau_i)$ be an orthonormal basis of $C_1$ then 
\[D_{\tau_i} G \cdot \tau_j = \delta_{ij} - u A_{ij},\]
\[D_{\tau_i} G \cdot \hat N = D_{\tau_i} u,\] 
where $D_{\tau_i} G$ denotes the differential of $G$ in the direction of $\tau_i$ and $(A_{ij})$ is the matrix that corresponds to the second fundamental form of $C_1$ with respect to the chosen basis.  Consider the matrix 
\[ B = \begin{pmatrix}
    D_{\tau_1} G \cdot \hat N, & D_{\tau_1} G \cdot \tau_1, & \dots & D_{\tau_1} G \cdot \tau_n \\ 
    \vdots & \vdots &\vdots &\vdots \\ 
    D_{\tau_n} G \cdot \hat N, & D_{\tau_n}G \cdot \tau_1, & \dots & D_{\tau_n} G \cdot \tau_n
\end{pmatrix}\]
Let $B^{(k)}$ denote the $n \times n$ minor of the matrix $B$ for $2 \leq k \leq n+1$ obtained by erasing the $k$-th column of the matrix $B$, then
\[\hat \nu = \bigg( \text{det} (Id - u A_C) \hat N + \sum_{k=2}^{n+1} (-1)^{k-1} B^{(k)} \tau_{k-1}  \bigg) |J_G|^{-1}.\]
In particular $\hat N \cdot \hat  \nu |J _G| =  \text{det}(Id - u A_C) $ and this finishes the proof.  
\end{proof}

In view of \eqref{eq: PDE of u}, we recall some properties of the operator $\mathcal{M}_C,$ referring to \cite[(2.1)]{CaHaSi} and \cite[Section 3]{CaHaSi}, whose notation we adopt here. We also refer to \cite[Lemma 2.26]{CoMi} for a proof, and to \cite{HS}, and note that due to \eqref{eq: embeddedness est} the form established for $\mathcal{M}_C$ in \cite{CoMi} is the same as in \cite{CaHaSi} or \cite{HS}. The operator
$\mathcal{M}_C$ has the form
\[\mathcal M _C u = L_C u + N\bigg(x, \frac{u}{|x|}, \nabla u\bigg) \cdot \nabla^2 u(x) + \frac{1}{|x|} P\bigg(x, \frac{u}{|x|}, \nabla u(x)\bigg),\]
where $L_C u = \Delta_C u + |A_C |^2 u$ is the Jacobi field operator of the cone $C\setminus \{0\}$, $N$ is a symmetric bilinear form, $(\cdot)$ is to be understood as the trace of the linear transformations on $T_x C_1$ associated to the bilinear forms $N, \nabla^2 u$ (equivalently, using an orthonormal basis of $T_x C_1$ to write the associated matrices $N_{ij}$ and $\nabla_{ij} u$, this is $N_{ij} \nabla_{ij}u$ with summation over repeated indices) and both $N, P$  have a $C^2$-dependency on the arguments $(x, z, p) \in C_1 \times \R \times TC_1 $. Moreover $\mathcal{M}_C$ is a quasilinear elliptic operator of order two, and for $|z|, |p| \leq 1$ we have the following inequalities at $(x, z, p)$
\begin{equation}
\setlength{\jot}{10pt}
\begin{aligned} 
\label{eq:est for M_C}
   & |N  ( x, z , p)| \leq M_\Sigma  ( |z| + |p| ), \\  
&|P (x,  z , p) | \leq M_\Sigma  ( |z| + |p| ) ^2, \\  
 &| P_z| + |P_p| + |x| (|P_{xz}| + |P_{xp}|) \leq M_\Sigma (|z| + |p|),\\ 
&|x| (|N_x| + |P_x| + |N_{xz}| + |N_{xp}|) + |N_z| + |N_p| + |N_{zz}|+ \\
&+ |N_{zp}| + |N_{pp}| + |P_{zz}| + |P_{zp}| + |P_{pp}| \leq M_\Sigma,
\end{aligned}
\end{equation}
where the subscripts denote partial differentiation and $ M_\Sigma$ is a constant that depends on the dimension $n$ and the link $\Sigma$ of the cone.

The estimates in (\ref{eq:est for M_C}) along with the radial decay assumption (\ref{eq:radial decay}) allow us to  prove that the linearisation of the PDE (\ref{eq: PDE of u}) has the following form:  

\begin{lem} \label{lin lemma} Let $u, v \in C^2(C_1; \R)$ satisfy (\ref{eq:radial decay}) and $\mathcal{M}_C u =  \lambda \:\text{det}(Id - u A_C),$ $\mathcal{M}_C v =  \lambda \: \text{det}(Id - vA_C)$.  Then $h = v- u$ satisfies the following linear PDE 
\begin{equation}
\label{eq: linearisation}
L_C h = A_1 \cdot \nabla^2 h  + \dfrac{1}{|x|} A_2 \cdot \nabla h + \dfrac{1}{|x|^2} A_3 h, 
\end{equation}
where $A_1 : C_1  \to \text{End}(TC_1), \:
A_2 : C_1  \to TC_1, \:
A_3 : C_1 \to \R$ and $A_1, A_2, A_3 \xrightarrow[|x| \to 0]{} 0$. Moreover if $u, v \in C^3(C_1; \R)$ then the coefficients of the PDE are in $C^{0, \alpha}(U; \R)$ for some $\alpha \in (0,1)$ and any $U \subset \subset  C_1 . $ 
\end{lem}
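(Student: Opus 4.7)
The plan is to subtract the two equations $\mathcal{M}_C v=\lambda\,\text{det}(Id-vA_C)$ and $\mathcal{M}_C u=\lambda\,\text{det}(Id-uA_C)$ and, using the structural decomposition $\mathcal{M}_C w=L_C w+N(x,w/|x|,\nabla w)\cdot\nabla^2 w+|x|^{-1}P(x,w/|x|,\nabla w)$, to recast each of the three remaining pieces (the $N$-difference, the $P$-difference, and the determinant difference) as a linear expression in $(h,\nabla h,\nabla^2 h)$ fitting the slots of \eqref{eq: linearisation} with coefficients vanishing at the origin.

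For the $N$-difference I would split
\[
N(x,\tfrac{v}{|x|},\nabla v)\cdot\nabla^2 v - N(x,\tfrac{u}{|x|},\nabla u)\cdot\nabla^2 u = \bigl[N(x,\tfrac{v}{|x|},\nabla v) - N(x,\tfrac{u}{|x|},\nabla u)\bigr]\cdot\nabla^2 v + N(x,\tfrac{u}{|x|},\nabla u)\cdot\nabla^2 h,
\]
and apply the fundamental theorem of calculus along $t\mapsto(x,(u+th)/|x|,\nabla u+t\nabla h)$ to the first bracket, expressing it as $\bigl(\int_0^1 N_z\,dt\bigr)(h/|x|)+\bigl(\int_0^1 N_p\,dt\bigr)\cdot\nabla h$. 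Multiplication by $\nabla^2 v$ and reabsorption into the slots of \eqref{eq: linearisation} gives coefficients bounded pointwise by $M_\Sigma\cdot|x||\nabla^2 v|$, which tends to zero at the origin using $|N_z|,|N_p|\leq M_\Sigma$ from \eqref{eq:est for M_C} and $|x||\nabla^2 v|\to 0$ from \eqref{eq:radial decay}. The remaining term contributes $N(x,u/|x|,\nabla u)$ to the $A_1\cdot\nabla^2 h$ slot, vanishing at the origin via $|N(x,z,p)|\leq M_\Sigma(|z|+|p|)$ and \eqref{eq:radial decay}.

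The $P$-difference is treated analogously: the fundamental theorem of calculus gives $\bigl(\int_0^1 P_z\,dt\bigr)(h/|x|)+\bigl(\int_0^1 P_p\,dt\bigr)\cdot\nabla h$, and dividing by $|x|$ places these in the $h/|x|^2$ and $\nabla h/|x|$ slots with coefficients $\int_0^1 P_z\,dt$ and $\int_0^1 P_p\,dt$, both of which vanish at the origin by $|P_z|,|P_p|\leq M_\Sigma(|z|+|p|)$ in \eqref{eq:est for M_C} and \eqref{eq:radial decay}. The determinant difference uses the minimality of $C$ crucially: writing $f(s)=\text{det}(Id-sA_C)$, Jacobi's formula yields $f'(0)=-\text{tr}(A_C)=-H_C=0$, so $|f'(s)|\leq C|s|$ for small $s$; hence $f(v)-f(u)=h\int_0^1 f'(u+th)\,dt$ with the integral bounded by $C\max(|u|,|v|)\to 0$ by \eqref{eq:radial decay}, fitting into the $A_3 h/|x|^2$ slot with $A_3=\lambda|x|^2\int_0^1 f'(u+th)\,dt\to 0$.

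The main obstacle is the factor $\nabla^2 v$, which is a priori only of order $1/|x|$, multiplying the $h/|x|$ and $\nabla h$ contributions from the $N$-difference; without further control this would produce singular, non-vanishing contributions to the $h/|x|^2$ and $\nabla h/|x|$ slots, and the decay hypothesis $|x||\nabla^2 v|\to 0$ in \eqref{eq:radial decay} is precisely what rescues them. For the Hölder regularity claim, if $u,v\in C^3$ then $\nabla^2 u,\nabla^2 v\in C^1$, and $N, P, N_z, N_p, P_z, P_p$ are $C^1$ in $(x,z,p)$ by the second-derivative estimates in \eqref{eq:est for M_C}; consequently, on any $U\subset\subset C_1$ (where $1/|x|$ is smooth) the coefficients $A_1, A_2, A_3$ are $C^1$, hence in $C^{0,\alpha}(U;\R)$ for every $\alpha\in(0,1)$.
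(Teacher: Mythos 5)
Your proof is correct and follows essentially the same approach as the paper: apply the fundamental-theorem-of-calculus trick to rewrite each of the three differences as a linear expression in $(h,\nabla h,\nabla^2 h)$ with coefficients vanishing as $|x|\to 0$, using the structural estimates \eqref{eq:est for M_C} together with \eqref{eq:radial decay}. The only cosmetic differences are that you split the $N$-term as $[N_v-N_u]\cdot\nabla^2 v+N_u\cdot\nabla^2 h$ rather than differentiating the full product $N_t\,\nabla^2(u+th)$ under the integral, and that you invoke $\mathrm{tr}\,A_C=0$ for the determinant term; the latter observation is not actually required (the extra factor $|x|^2$ already compensates the $1/|x|$ singularity of $A_C$ and one would get $A_3=O(|x|)$ even if $C$ were not minimal), but it does no harm.
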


\begin{proof}We first compute the operator $\mathcal{L} $ such that $\mathcal{L} h = \mathcal{M}_C v - \mathcal{M}_C u.$ We introduce the notation $N(u) = N\big(x, \frac{u}{|x|}, \nabla u\big) $ and $P(u) = P\big(x, \frac{u}{|x|}, \nabla u\big)$ (for $N, P$ introduced above). Then, since $L_C$ is linear,
\begin{equation*}
\mathcal{M}_C v -\mathcal{M}_C u = L_C h +  N(v) \cdot \nabla^2 v - N(u) \cdot \nabla^2 u +\dfrac{1}{|x|} (P(v)  - P(u) ).
\end{equation*}

We recall the standard method to rewrite $ N(v) \cdot \nabla^2 v - N(u) \cdot \nabla^2 u$. We denote by $N_{ij}$ the components of the matrix associated to the operator $N$ (in an orthonormal basis of $T_x C_1$) and compute (with implicit summation on repeated indices)

\begin{align*}
\setlength{\jot}{10pt}
    & N_{ij}\bigg(x, \frac{v}{|x|}, \nabla v\bigg) \nabla_{ij} v - N_{ij}\bigg(x, \frac{u}{|x|}, \nabla u\bigg)\nabla_{ij}u = \\ 
&=\int_0 ^1 \dfrac{d}{dt} \Big( N_{ij}(t,u,v) (\nabla_{ij}u + t(\nabla_{ij} v - \nabla_{ij} u) \Big ) dt, 
\end{align*}
with the notation $N_{ij}(t,u,v) = N_{ij}\bigg (x,  \frac {u}{|x|} + t \frac{(v-u)}{|x|},  \nabla u + t(\nabla v - \nabla u) \bigg).$
Differentiating with respect to $t$ we get the following expression
\begin{equation*}
 \setlength{\jot}{10pt}
 \begin{aligned}
     &\bigg(\int_0^1 N_{ij}(t,u,v) dt \bigg) \nabla_{ij}h +  \bigg( \int_0^1 |x|  N_{ij, z}(t,u,v) (\nabla_{ij}u + t\nabla_{ij}h)dt  \bigg) \frac{h}{|x|^2}+ \\
     & + \bigg( \int_0^1 |x| N_{ij,p}(t,u,v)(\nabla_{ij}u + t \nabla_{ij}h)dt \bigg) \cdot \frac{ \nabla h}{|x|},
   \end{aligned}
 \end{equation*}
where $N_{ij , z} , N_{ij, p}$ denote partial differentiation of $N_{ij}$ (with respect to $z$ and $p$ respectively). 
A similar computation gives that 
\begin{equation*}
\frac{1}{|x|}(P(v) - P(u)) = \bigg( \int_0^1 P_{z}(t,u,v) dt \bigg)\frac{h}{|x|^2} + \bigg( \int_0^1 P_{p}(t,u,v) dt \bigg) \cdot \frac{\nabla h}{|x|},
\end{equation*}
where again we use the notation $P(t,u,v) = P\Big (x,  \frac {u}{|x|} + t \frac{(v-u)}{|x|},  \nabla u + t(\nabla v - \nabla u) \Big)$ and $P_z, P_p$ denote partial differentiation as above. 
Putting everything together we get that 
\begin{equation*}\mathcal{M}_C v - \mathcal{M}_Cu = L_C h+ \bar A_1 \cdot \nabla^2 h + \frac{1}{|x|} \bar A_2 \cdot \nabla h +  \frac{1}{|x|^2} \bar A_3 h,
\end{equation*}
where 
\[\bar A_1 =  \int_0^1 N_{ij}(t,u,v)dt,\] 
\[\bar A_2 = \int_0^1 |x| N_{ij , p}(t,u,v)(\nabla_{ij}u + t \nabla_{ij} h) dt + \int_0^1 P_{p}(t,u,v) dt,\] 
\[\bar A_3 = \int_0^1 |x| N_{ij,z}(t,u,v) (\nabla_{ij}u +t\nabla_{ij}h)dt + \int_0^1 P_{z}(t,u,v)dt. \]
Using the estimates in (\ref{eq:est for M_C})  we have that 
\begin{equation*}
\setlength{\jot}{10pt}
\begin{aligned}
    &|\bar A_1| \leq M_\Sigma \bigg( \frac{|u|}{|x|} + \frac{|h|}{|x|} + |\nabla u| + |\nabla h| \bigg ), \\ 
    & |\bar A_2| \leq  M_\Sigma \bigg(\frac{|u|}{|x|} + \frac{|h|}{|x|} + |x| |\nabla^2 u| + |x| |\nabla^2 h|  \bigg),\\ 
    & |\bar A_3| \leq M_\Sigma \bigg(\frac{|u|}{|x|} + \frac{|h|}{|x|} + |x| |\nabla^2 u| + |x| |\nabla^2 h|  \bigg),
    \end{aligned}
\end{equation*}
where $M_\Sigma$ denotes a constant that depends on the link $\Sigma$ of the cone $C$. Thus from (\ref{eq:radial decay}) we have that $\bar A_1, \bar A_2,  \bar A_3 \to 0$ as $|x| \to 0.$

In a similar way, we now compute (using the Jacobi formula for the derivative of the determinant)
\begin{equation*}
\begin{aligned}
    \text{det}(Id - v A_C) - \text{det}(Id - u A_C) = \int_0^1 \dfrac{d}{dt}\text{det}(Id - (u + th)A_C)dt \\
    = -\int_0^1 \text{det}\big(Id - (u+th )A_C\big) \text{tr}\Big((Id - (u+th)A_C)^{-1} h A_C \Big) dt\\
    = -\int_0^1 h\, \text{det}\big(Id - (u+th )A_C\big) \text{tr}\Big((Id - (u+th)A_C)^{-1}   A_C \Big) dt = \dfrac{1}{|x|^2}\bar A_4 h,
\end{aligned}
\end{equation*}
where
\[\bar A_4=-\int_0^1 |x|^2 \text{det}(Id - (u+th )A_C) \text{tr}((Id - (u+th)A_C)^{-1} A_C ) dt .\]
From (\ref{eq:radial decay}) we have that $Id - (u +th)A_C \to Id$ as $|x| \to 0$ thus $ \bar A_4$ converges to $0$ as $|x| \to 0$ as well. 

The statement follows by setting $A_1=-\bar A_1$,  $A_2=-\bar A_2$ and $A_3=\bar A_4 - \bar A_3$.
\end{proof}

From (\ref{eq: linearisation}) we see that $L_C$ becomes the leading term of the PDE as $|x| \to 0.$ This crucial fact will allow us, in Proposition \ref{Prop:sing_max_princ} below, to construct a non-trivial positive Jacobi field of $C$. In view of that, we recall some well-known properties of the Jacobi operator $L_C$. 

For $x \in C \setminus \{0\}$ let $r=|x|$ and $\omega = \frac{x}{|x|} \in \Sigma$  denote spherical coordinates on $C.$ Then the metric of the cone is given by $g = dr^2 + r^2 g_\Sigma$ where $g_\Sigma$ is the pull-back on $\Sigma$ of the round metric on $S^n$ (via the inclusion map). The operator $L_C$ is expressed in spherical coordinates as 
\begin{equation}
\label{L_C in spherical}
L_C f = r^{-2}L_\Sigma f + r^{1-n}\partial_r (r^{n-1}\partial_r f),
\end{equation}
where $L_\Sigma = \Delta_\Sigma + |A_\Sigma|^2$ and $A_\Sigma$ is the second fundamental form of $\Sigma$ in $S^n.$ Since $L_\Sigma$ is a linear elliptic operator on a smooth compact manifold, we consider the spectrum $\lambda_1 < \lambda_2 \leq \ldots, \to +\infty$ of $-L_\Sigma$.

The first eigenvalue $\lambda_1$ is simple and it is known from \cite{CaHaSi} that $C$ is stable if and only if 
\[\max \{-\lambda_1, 0 \} \leq \frac{(n-2)^2}{4}.\] 
In particular, if $C$ is stable (which will be the case in forthcoming sections) we define $\gamma^{\pm}= \dfrac{n-2}{2} \pm \sqrt{\frac{(n-2)^2}{4}+ \lambda_1}$ and we have $\gamma^+ \geq \gamma^- \geq 0.$ 

\begin{oss} Unless $C$ is a hyperplane, one always has $\gamma^- >0$. Indeed if $\gamma^- =0 $ then $\lambda_1 = 0$ and from the variational characterisation of the first eigenvalue of $L_\Sigma$, if we take as a test function a constant function, we get that $|A_\Sigma| \equiv 0$ thus $|A_C| \equiv 0$ and $C$ is a plane. 
\end{oss} 

Any positive solution of $L_C f =0$ is of the form (see e.g.~\cite[p.~105]{HS}, and Lemma \ref{lem: rep of J-F} below)
\begin{equation} \label{eq: pos Jacobi fields} f(r \omega) = \bigg ( \dfrac{c_1^+}{r^{\gamma^+}} + \dfrac{c_1^-}{r^{\gamma^-}} \bigg)\phi_1(\omega),
\end{equation}
where $\phi_1>0$ is the first eigenfunction of $L_\Sigma$, that is $L_\Sigma \phi_1 = -\lambda_1 \phi_1$ and $c_1^+ , c_1^-$ are non-negative constants.

\section{A singular maximum principle}
\label{sing_max_princ}

We first state and prove the following fact regarding the convergence of minimisers of $J_\lambda$. Analogous results hold (with similar arguments that require building competitors) for area-minimising currents (see e.g.~\cite[Chapter 7, Theorem 2.4]{Simon_notes}) and for perimeter minimisers or almost-minimisers (see e.g.~\cite[Theorem 21.14]{Mag}).

\begin{lem}
\label{lem:conv_of_minimisers}
For $j\in \mathbb{N}$, let $E_j$ be sets with finite perimeter in $B_2$, and let $\lambda_j, \lambda \in [0, \infty)$, with $\displaystyle\lim_j \lambda_j = \lambda$. For each $j$ we assume that $E_j$ minimises $J_{\lambda_j}$ among sets that coincide with $E_j$ in $B_2 \setminus B_1$. Let $E$ be a set with finite perimeter in $B_2$ and assume that $\llbracket E_j \rrbracket \to \llbracket E \rrbracket$ (as currents) in $B_2$. Then $E$ minimises $J_\lambda$ among sets that coincide with $E$ in $B_2 \setminus B_1$. Moreover, $|\partial^* E_j| \to |\partial^* E|$ in $B_1$ (as varifolds).
\end{lem}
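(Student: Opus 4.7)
The strategy is the direct method combined with a slicing argument. Given any admissible competitor $D$ for $E$ (a set of finite perimeter with $D = E$ a.e.\ in $B_2 \setminus B_1$), I construct approximating competitors for each $E_j$ by gluing $D$ and $E_j$ along a sphere: for $\rho \in (0,1)$, set
\[ D_j^\rho := (D \cap B_\rho) \cup (E_j \setminus B_\rho). \]
Since $\rho < 1$, this set coincides with $E_j$ on $B_2 \setminus B_\rho \supset B_2 \setminus B_1$ and is therefore admissible for $E_j$. Uniform perimeter bounds for $E_j$ in $B_1$ follow by comparing each $E_j$ to the trivial competitor $E_j \setminus B_1$, giving $|D\chi_{E_j}|(\overline{B_1}) \leq \mathcal{H}^n(\partial B_1) + \lambda|B_1|$. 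Together with the current convergence, BV compactness yields $\chi_{E_j} \to \chi_E$ in $L^1(B_1)$. For a.e.\ $\rho \in (0,1)$ the Radon measures $|D\chi_{E_j}|$, $|D\chi_E|$, $|D\chi_D|$ have no atom on $\partial B_\rho$ (using a countable union of null sets in $j$); splitting the perimeters accordingly and applying $J_\lambda(E_j) \leq J_\lambda(D_j^\rho)$ yields
\[ \text{Per}_{B_\rho}(E_j) - \lambda |E_j \cap B_\rho| \leq \text{Per}_{B_\rho}(D) - \lambda |D \cap B_\rho| + \int_{\partial B_\rho} |\chi_D - \chi_{E_j}| \, d\mathcal{H}^n. \]

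To pass $j \to \infty$ at a good $\rho$, I use the coarea identity
\[ \int_0^1 \int_{\partial B_\rho} |\chi_E - \chi_{E_j}| \, d\mathcal{H}^n \, d\rho = \|\chi_E - \chi_{E_j}\|_{L^1(B_1)} \xrightarrow[j\to\infty]{} 0, \]
which by Fatou provides, for a.e.\ $\rho$, a subsequence $\{j_k\}$ along which the inner integral tends to $0$. Combined with lower semicontinuity of perimeter and $L^1$-continuity of volume, this yields the localised minimality
\[ \text{Per}_{B_\rho}(E) - \lambda |E \cap B_\rho| \leq \text{Per}_{B_\rho}(D) - \lambda |D \cap B_\rho| + \int_{\partial B_\rho} |\chi_D - \chi_E| \, d\mathcal{H}^n \qquad (\spadesuit) \]
for a.e.\ $\rho \in (0,1)$. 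To upgrade $(\spadesuit)$ to the full statement $J_\lambda(E) \leq J_\lambda(D)$, I decompose
\[ J_\lambda(F) = [\text{Per}_{B_\rho}(F) - \lambda |F \cap B_\rho|] + [\text{Per}_{B_2 \setminus \overline{B_\rho}}(F) - \lambda |F \setminus B_\rho|] \]
(valid for a.e.\ $\rho$ with $|D\chi_F|(\partial B_\rho) = 0$), take the difference of the versions for $F = E$ and $F = D$, and let $\rho \to 1^-$ along good radii. Since $D = E$ on $B_2 \setminus B_1$, the outside-$B_\rho$ difference reduces to contributions on the shrinking annulus $B_1 \setminus \overline{B_\rho}$, which vanish by continuity from above of the finite measures $|D\chi_E|$, $|D\chi_D|$, $\mathcal{L}^{n+1}$, plus possible boundary contributions on $\partial B_1$; these, together with the seam $\int_{\partial B_\rho}|\chi_D - \chi_E|$, are handled via BV-trace theory, crucially using $D = E$ a.e.\ outside $B_1$ so that the outer traces of $\chi_D$ and $\chi_E$ on $\partial B_1$ coincide.

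For the varifold statement, I apply $(\spadesuit)$ with the trivial choice $D = E$ to obtain $\limsup_j \text{Per}_{B_\rho}(E_j) \leq \text{Per}_{B_\rho}(E)$ along the good subsequence; combined with lower semicontinuity this forces $\text{Per}_{B_\rho}(E_j) \to \text{Per}_{B_\rho}(E)$ for a.e.\ $\rho \in (0,1)$, i.e.\ total-variation convergence of the perimeter measures on $B_\rho$. Together with the current convergence $\llbracket E_j \rrbracket \to \llbracket E \rrbracket$, which encodes the orientation of the gradient, this yields weak convergence of the multiplicity-one rectifiable varifolds $|\partial^* E_j| \to |\partial^* E|$ on each $B_\rho$, and hence on $B_1$. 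The main technical obstacle is the last step in the minimality argument: the passage from the localised $(\spadesuit)$ to the global minimality on $B_1$, which demands careful BV-trace bookkeeping at $\partial B_1$ to rule out spurious perimeter concentration in the limit, exploiting that $D = E$ a.e.\ outside $B_1$.
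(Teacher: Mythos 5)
Your approach is genuinely different from the paper's: you glue competitors at a variable radius $\rho<1$ and then send $\rho \to 1^{-}$, whereas the paper glues exactly at $\partial B_1$ using the outer slice $\langle \llbracket \cdot \rrbracket, |x|=1^+\rangle$ and the trace functions $\theta^{\pm}$ (Lemma \ref{lem:splitting_perimeter_sphere} and Remark \ref{oss:slices_theta}). The derivation of your localised inequality $(\spadesuit)$ is fine, and the varifold statement is handled in essentially the same spirit as the paper (taking $D=E$ to force equality of perimeters on balls $B_\rho$).

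However, there is a genuine gap in the upgrade from $(\spadesuit)$ to the full minimality $J_\lambda(E) \le J_\lambda(D)$. Carry out your own bookkeeping explicitly: writing $\theta^{\pm}_{G}$ for the outer/inner traces of a set $G$ on $\partial B_1$ and using $\theta^+_D=\theta^+_E$ (since $D=E$ a.e.\ outside $B_1$), the difference $J_\lambda(E)-J_\lambda(D)$ equals
\[
\bigl[\text{Per}_{B_1}E - \lambda|E\cap B_1|\bigr] - \bigl[\text{Per}_{B_1}D - \lambda|D\cap B_1|\bigr]
+ \int_{\partial B_1}\bigl(|\theta^+_E-\theta^-_E|-|\theta^+_E-\theta^-_D|\bigr),
\]
whereas letting $\rho\to 1^{-}$ in $(\spadesuit)$ (the seam trace converges to the \emph{inner} traces at $\partial B_1$) yields
\[
\bigl[\text{Per}_{B_1}E - \lambda|E\cap B_1|\bigr] - \bigl[\text{Per}_{B_1}D - \lambda|D\cap B_1|\bigr] \le \int_{\partial B_1}|\theta^-_D-\theta^-_E|.
\]
Combining the two, what you obtain is $J_\lambda(E)-J_\lambda(D)\le \int_{\partial B_1}\bigl(|\theta^-_D-\theta^-_E|+|\theta^+_E-\theta^-_E|-|\theta^+_E-\theta^-_D|\bigr)$, and by the triangle inequality the integrand is \emph{non-negative}, not non-positive: it equals $2$ on, e.g., the set $\{\theta^+_E=1,\theta^-_E=0,\theta^-_D=1\}$. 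So the residual term does not cancel and the inequality you need does not follow. The coincidence of outer traces alone is not enough; the problem is precisely a possible jump of $E$ (or of $D$) across $\partial B_1$, which is not ruled out a priori.

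The paper sidesteps this by gluing $F\cap B_1$ to $E_j$ at $\partial B_1$ itself. The seam cost is then $\int_{\partial B_1}|\theta^+_{E_j}-\theta^-_F|$, i.e.\ it involves $E_j$'s \emph{outer} trace; as $j\to\infty$ one has $\theta^+_{E_j}\to\theta^+_E=\theta^+_F$ (dominated convergence for $\{0,1\}$-valued functions), so the seam converges exactly to $\int_{\partial B_1}|\theta^+_F-\theta^-_F|=\text{Per}_{\partial B_1}(F)$, which is precisely the $\partial B_1$-contribution already present in $J_\lambda(F)$. That exact cancellation is the point your $\rho\to 1^{-}$ argument loses, because gluing at $\rho<1$ only ever sees inner data. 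To repair your proof you would have to glue at $\partial B_1$ directly (which is the paper's proof), or else first establish $\mathcal{H}^n(\partial^* E\cap\partial B_1)=0$, which is not available a priori for the limit $E$ without already knowing it is a minimiser.
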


\begin{oss}
\label{oss:slices_theta}
Let $D$ be a set with finite perimeter in $B_2$. The outer and inner slices $\langle \llbracket D \rrbracket, |x|=1^+\rangle$ and $\langle \llbracket D \rrbracket, |x|=1^-\rangle$ are $n$-dimensional integral currents supported in $\partial B_1$ (which is $n$-dimensional), therefore there exist integer valued $BV$-functions $\theta_D^+$ and $\theta_D^-$ such that $\langle \llbracket D \rrbracket, |x|=1^+\rangle = \theta_{D}^+ (\mathcal{H}^n \res \partial B_1) \vec{\xi}$ and $\langle \llbracket D \rrbracket, |x|=1^-\rangle = \theta_{D}^- (\mathcal{H}^n \res \partial B_1) \vec{\xi}$, where $\vec{\xi}$ is the orientation of $\partial B_1$ corresponding (in Hodge duality) to the choice of outward pointing unit normal. In fact, $\theta_D^+, \theta_D^-$ are $\{0,1\}$-valued ($\mathcal{H}^n$-a.e.~on $\partial B_1$), since $\llbracket D \rrbracket$ is the current of integration on a Caccioppoli set.
\end{oss}

\begin{proof}
We remark that $\langle \llbracket E_j \rrbracket, |x|=1^+\rangle \to \langle \llbracket E \rrbracket, |x|=1^+\rangle$ as currents (since by definition $\langle \llbracket E_j \rrbracket, |x|=1^+\rangle= - \partial\llbracket E_j \cap (B_2\setminus \overline{B_1}) \rrbracket + (\partial \llbracket E_j \rrbracket) \res (B_2 \setminus \overline{B_1})$, and $\llbracket E_j \rrbracket \to \llbracket E \rrbracket$ in $B_2$ by assumption). 

Let $F$ be a set with finite perimeter that coincides with $E$ in $B_2 \setminus B_1$. Set
\[F_j = (F \cap B_1) \cup \big(E_j \cap (B_2 \setminus B_1)\big).\]
Then $F_j \to F$ as sets of finite perimeter (when $F_j$, $F$ are sets with finite perimeter, the convergence $F_j \to F$ as sets with finite perimeter is equivalent to $\llbracket F_j\rrbracket \to \llbracket F\rrbracket$ as currents). Moreover, $\langle \llbracket F_j \rrbracket, |x|=1^+\rangle =\langle \llbracket E_j \rrbracket, |x|=1^+\rangle$ by definition of $F_j$, and $\langle \llbracket F \rrbracket, |x|=1^+\rangle =\langle \llbracket E \rrbracket, |x|=1^+\rangle$ by definition of $F$.

With notation as in Remark \ref{oss:slices_theta}, we remark that $\theta_E^+ = \theta_F^+$, $\theta_{F_j}^- = \theta_F^-$ and $\theta_{F_j}^+ = \theta_{E_j}^+$. 
Using Lemma \ref{lem:splitting_perimeter_sphere} with $E_j, F_j$ in place of $D$, we rewrite the minimising condition $J_{\lambda_j}(E_j)   \leq J_{\lambda_j}(F_j)$ in the form,

\[\text{Per}_{B_1} E_j +  \mathbb{M}\big(\langle \llbracket E_j \rrbracket, |x|=1^+\rangle - \langle \llbracket E_j \rrbracket, |x|=1^-\rangle\big)- \lambda_j \mathcal{H}^{n+1}\big( E_j \big) \leq\]
\[ \text{Per}_{B_1} F +  \mathbb{M}\big(\langle \llbracket F_j \rrbracket, |x|=1^+\rangle - \langle \llbracket F_j \rrbracket, |x|=1^-\rangle\big)- \lambda_j \mathcal{H}^{n+1}\big(F_j\big).\]
(We have used $\text{Per}_{B_1} F=\text{Per}_{B_1} F_j$ and $\text{Per}_{B_2\setminus \overline{B_1}} E_j=\text{Per}_{B_2\setminus \overline{B_1}} F_j$.) The second term on the right-hand-side is written as $\int_{\partial B_1} |\theta_{F_j}^+ - \theta_{F_j}^-|=\int_{\partial B_1} |\theta_{E_j}^+ - \theta_{F}^-|$. Since $\partial B_1$ is compact, $|\theta_{E_j}^+ - \theta_{F}^-| \leq 1$, and $\theta_{E_j}^+ \to \theta_E^+ = \theta_F^+$ pointwise $\mathcal{H}^n$-a.e.~in $\partial B_1$  (by the hypothesis $\langle \llbracket E_j \rrbracket, |x|=1^+\rangle  \to \langle \llbracket E \rrbracket, |x|=1^+\rangle$), we conclude that (by dominated convergence) $\int_{\partial B_1} |\theta_{E_j}^+ - \theta_{F}^-| \to \int_{\partial B_1} |\theta_{F}^+ - \theta_{F}^-|$. The latter is $\mathbb{M}\big(\langle \llbracket F \rrbracket, |x|=1^+\rangle - \langle \llbracket F \rrbracket, |x|=1^-\rangle\big)$. Sending $j\to \infty$ and using the lower-semi-continuity of mass and perimeter on the left-hand-side, as well as $\mathcal{H}^{n+1}(E_j) \to \mathcal{H}^{n+1}(E)$, $\mathcal{H}^{n+1}(F_j) \to \mathcal{H}^{n+1}(F)$ (implied by $E_j \to E$, $F_j \to F$),
we find
\[\text{Per}_{B_1} E +  \mathbb{M}\big(\langle \llbracket E \rrbracket, |x|=1^+\rangle - \langle \llbracket E \rrbracket, |x|=1^-\rangle\big)- \lambda \mathcal{H}^{n+1}\big( E \big) \leq\]
\[ \text{Per}_{B_1} F +  \mathbb{M}\big(\langle \llbracket F \rrbracket, |x|=1^+\rangle - \langle \llbracket F \rrbracket, |x|=1^-\rangle\big)- \lambda \mathcal{H}^{n+1}\big( F\big).\]
Adding $\text{Per}_{B_2 \setminus \overline{B_1}} E=\text{Per}_{B_2 \setminus \overline{B_1}} F$ to both sides, and using Lemma \ref{lem:splitting_perimeter_sphere} again (with $E, F$ in place of $D$), the inequality obtained becomes $J_\lambda(E) \leq J_\lambda(F)$. Therefore $E$ minimises $J_\lambda$ (among sets that coincide with $E$ in $B_2 \setminus \overline{B_1}$).

Repeating the above argument with $E$ in place of $F$ shows that we must have $\text{Per}_{B_1}E = \lim_{j\to \infty} \text{Per}_{B_1} E_j$, therefore $\|\partial^* E_j\| \to \|\partial^* E\|$ as Radon measures in $B_1$ (and, by Allard's compactness for integral varifolds, $|\partial^* E_j| \to |\partial^* E|$ in $B_1$).
\end{proof}

\begin{oss}
\label{oss:mult_1_tangents}
Assume that $E$ minimises $J_{\underline{\lambda}}$ in an open set $U$ then, given a point $x \in \overline{\partial^* E}$ and a sequence of dilations $\eta_{x,r_j}(y) = \frac{y-x}{r_j}$, $r_j \searrow 0,$ consider the sequence of Caccioppoli sets $E_j = \eta_{x, r_j}(E)$ (blow up sequence). At the same time, we may consider the sequence of varifolds $|\partial ^* E_{j}| = {\eta_{x,r_j}}_\sharp |\partial^* E|$. Standard theory (respectively of Caccioppoli sets and of varifolds, see e.g.~\cite{Mag}, \cite{Simon_notes}) guarantees that both sequences subsequentially converge. Any limit in the sense of varifolds is a so-called varifold tangent cone of $|\partial^* E|$ at $x$. Lemma \ref{lem:conv_of_minimisers} implies that any varifold tangent cone is of the form $|\partial^* E_\infty|$, where $E_\infty$ is a Caccioppoli set obtained as a (subsequential) limit of $E_j$. This follows by passing to a subsequence (still denoted by $r_j$) for which we have convergence to a varifold tangent cone, and by using Lemma \ref{lem:conv_of_minimisers} (with $\lambda_j, \lambda$ therein replaced by $r_j \underline{\lambda}$ and $0$ respectively), noting that the dilated set $E_j$ is a minimiser of $J_{r_j \underline{\lambda}}$, and letting $E_\infty$ be the Caccioppoli set to which $E_j$ converges.
In particular, any varifold tangent cone has multiplicity $1$ on its regular part.
\end{oss}

\begin{oss}
If $\lambda<n$ then for a minimiser such as $E$ (similarly for $E_j$ if $\lambda_j<n$) in Lemma \ref{lem:conv_of_minimisers}, one has $\mathcal{H}^n(\partial^* E \cap \partial B_1)=0$ (see Lemma \ref{lem:stationarity}). Therefore $\partial \llbracket E \rrbracket \res \partial B_1 =0$ and $\langle \llbracket E \rrbracket, |x|=1^+\rangle = \langle \llbracket E \rrbracket, |x|=1^-\rangle$ by \eqref{eq:diff_of_slices} (therefore the standard slice $\langle \llbracket E  \rrbracket, |x|=1 \rangle$ exists).
\end{oss}

We are now ready to prove the main result of this section, an instance of maximum principle for CMC hypersurfaces with isolated singularities.

\begin{Prop}
\label{Prop:sing_max_princ}
Let $E$ and $F$ be sets with finite perimeter in $B_2$ that minimise $J_\lambda$ with respect to their own boundary condition, assumed in $B_2 \setminus B_1$. Assume that $\overline{\partial^* E} \cap (B_1 \setminus \{0\})$ is smoothly embedded, $0 \in \overline{\partial^* E}$, and that a tangent cone to $|\partial^* E|$ at $0$ is regular (which means, it is smooth away from $0$ and has multiplicity $1$ on its regular part). Assume further that $F \subset E$ and that $0 \in \overline{\partial^* F}$. Then $E \cap B_1=F \cap B_1$.
\end{Prop}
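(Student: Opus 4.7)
The plan is to argue by contradiction: assume $F\cap B_1 \subsetneq E\cap B_1$. The strong maximum principle for smooth CMC hypersurfaces (applied at any interior smooth touching point, noting that both $\partial E$ and $\partial F$ have scalar mean curvature $\lambda$ with inward-pointing mean curvature vectors) rules out contact of $\partial E$ and $\partial F$ in $B_1 \setminus \{0\}$. Hence $F$ lies strictly inside $E$ throughout $B_1 \setminus\{0\}$. The strategy is then to produce, by a linearisation-and-rescaling procedure, a positive Jacobi field $h_\infty$ on the tangent cone $C$, and to derive a contradiction between the forced blow-up of $h_\infty$ at the vertex (dictated by the stability of $C$) and the decay at $0$ inherited from Simon's unique-tangent-cone theorem.

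\textbf{Graph representations over $C$.} First I would represent both $\partial E$ and $\partial F$ near $0$ as $C^2$ graphs over $C_1$. For $\partial E$ this follows directly from the assumption that the tangent cone at $0$ is regular, together with Simon's theorem \cite{Sim_unique_tan}, which supplies graph functions $u_E$ satisfying \eqref{eq:radial decay} and in fact a quantitative polynomial decay $|u_E(x)| \le c|x|^{1+\mu}$ for some $\mu>0$. The same for $F$ requires more work: using $F\subset E$ and the closeness of $\partial^* E$ to $C$ at small scales, any varifold tangent to $|\partial^* F|$ at $0$ is, by Lemma \ref{lem:conv_of_minimisers} and Remark \ref{oss:mult_1_tangents}, a minimal cone of multiplicity $1$ on its regular part and is contained in the closed region bounded by $C$; a monotonicity/constancy-theorem comparison forces it to coincide with $C$. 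Allard's regularity (applied where $\partial^* F$ is sandwiched close to $C$) then yields smoothness of $\partial F$ away from $0$, and a second application of Simon's theorem gives a graph function $u_F$ obeying the same estimates as $u_E$.

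\textbf{Linearisation, rescaling, and Jacobi field.} Set $h := u_F - u_E$. By the strict separation of the two hypersurfaces away from $0$, $h>0$ on $C_1\setminus\{0\}$, and $h \to 0$ at the origin with rate $O(|x|^{1+\mu})$. By Lemma \ref{lin lemma}, $h$ satisfies
\[
L_C h = A_1 \cdot \nabla^2 h + \frac{1}{|x|} A_2 \cdot \nabla h + \frac{1}{|x|^2} A_3 h, \qquad A_i(x) \to 0 \text{ as } |x|\to 0.
\]
For $r_k\to 0^+$, define $c_k := \sup_{\{|x|\in [1/2,1]\}\cap C} h(r_k \cdot)$ and $\tilde h_k(x):= h(r_k x)/c_k$. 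The rescaled $\tilde h_k$ satisfy an equation of the same form with coefficients $A_i(r_k\cdot)$ that tend to zero uniformly on compact subsets of $C\setminus\{0\}$. By Harnack's inequality and Schauder estimates, a subsequence converges in $C^{2,\alpha}_{\mathrm{loc}}(C\setminus\{0\})$ to a non-trivial non-negative function $h_\infty$ with $L_C h_\infty=0$; the strong maximum principle then gives $h_\infty>0$ on $C\setminus\{0\}$. The stability of $C$ (inherited from $E$ being a minimiser, cf.~\cite{CaHaSi}) and the classification \eqref{eq: pos Jacobi fields} yield
\[
h_\infty(r\omega) = \bigl(c_1 r^{-\gamma^+} + c_2 r^{-\gamma^-}\bigr)\phi_1(\omega), \qquad c_1,c_2\ge 0,\ c_1+c_2>0.
\]
Since $C$ is not a hyperplane (it has a genuine singularity at $0$), the remark in Section \ref{Jacobi fields section} gives $\gamma^->0$, so $h_\infty(r\omega)\to +\infty$ as $r\to 0^+$.

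\textbf{Deriving the contradiction and main obstacle.} Simon's quantitative decay gives $h(r_k x) \le c\,(r_k|x|)^{1+\mu}$, whence $\tilde h_k(x) \le c\,(r_k^{1+\mu}/c_k)|x|^{1+\mu}$. The normalisation $c_k$ is by design comparable (via the Harnack inequality for $\tilde h_k$) to the typical value of $h$ at scale $r_k$, and Simon's sharp decay for both $u_E$ and $u_F$ ensures that $r_k^{1+\mu}/c_k$ remains bounded along the subsequence producing $h_\infty$. Passing to the limit yields $h_\infty(x) \le c'|x|^{1+\mu}$, hence $h_\infty(x)\to 0$ as $|x|\to 0$, in direct contradiction with the blow-up established above; this forces $h\equiv 0$, i.e.~$F\cap B_1 = E\cap B_1$. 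The principal obstacle is the coordination of these last ingredients: one must simultaneously guarantee that (i) the tangent cone of $|\partial^* F|$ at $0$ is $C$ (so that Simon's theorem applies to $F$ and yields the required polynomial decay of $u_F$), and (ii) the normalisation $c_k$ is both sharp enough that the limit $h_\infty$ is non-trivial and compatible with the polynomial upper bound on $h$, so that the stability-forced blow-up of $h_\infty$ at $0$ clashes with the rescaled Simon bound. If needed, the exponent $\mu$ can be iteratively improved via the spectral structure of $L_\Sigma$ on $\Sigma$, a device familiar from \cite{HS} in the minimal case.
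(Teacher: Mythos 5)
Your proposal follows the same broad architecture as the paper's proof (graph representation over the common cone $C$, linearisation via Lemma \ref{lin lemma}, rescaling and extraction of a positive Jacobi field, contradiction with the classification \eqref{eq: pos Jacobi fields}). However, the mechanism you choose for the final contradiction contains a genuine gap, which you partially acknowledge but do not resolve. You want to transfer Simon's quantitative decay $|h(x)|\le c|x|^{1+\mu}$ to the rescaled limit $h_\infty$, concluding $h_\infty(x)\le c'|x|^{1+\mu}$. For this you need the quotient $r_k^{1+\mu}/c_k$ to stay bounded, i.e.~the \emph{lower} bound $c_k\ge \epsilon\, r_k^{1+\mu}$ for the normalising constant. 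But Simon's theorem only gives an \emph{upper} bound on $u_E,u_F$, hence only the upper bound $c_k\le 2c\,r_k^{1+\mu}$, which is the wrong direction: it shows $r_k^{1+\mu}/c_k\ge 1/(2c)$ and says nothing at all about boundedness from above. If $h$ decays faster than $r^{1+\mu}$ then the transferred bound becomes vacuous and $h_\infty$ may indeed blow up at the vertex, consistent with the classification, so no contradiction is obtained. Your suggestion of iteratively improving $\mu$ does not repair this, since the same mismatch between upper bounds on $h$ and the needed lower bound on $c_k$ persists.

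The paper avoids this issue entirely and does not rely on Simon's quantitative rate. It uses only the qualitative decay $h\to 0$ at $0$ to select a sequence of scales $r_j$ (rather than an arbitrary $r_k\to 0$) along which $\sup_{C_{r_j}} h=\sup_{\partial C_{r_j}} h$ is attained at a point $x_j$ with $|x_j|=r_j$. After rescaling by $r_j$ and dividing by $M_j=\sup_{\partial C_1} h_j$, the normalised functions $f_j$ satisfy $\sup_{C_1}f_j=\sup_{\partial C_1}f_j=1$; passing to the limit (using Harnack, Schauder and the uniform convergence $A_i(r_j\cdot)\to 0$) yields a positive Jacobi field $f$ on $C\setminus\{0\}$ which is \emph{bounded} on $C_1$. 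That boundedness at the vertex is what contradicts \eqref{eq: pos Jacobi fields} (the classification forces blow-up since $\gamma^-> 0$). Your construction, with arbitrary $r_k$ and a normalisation on the annulus $[1/2,1]$, does not produce this boundedness at the vertex, so the limit need not carry the required incompatibility with \eqref{eq: pos Jacobi fields}.

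Two further omissions worth noting. First, your smoothness argument for $\partial F$ away from $0$ is too brief: the sandwich $F\subset E$ is one-sided only and does not by itself pin $\partial F$ near $C$. The paper establishes smoothness by a blow-up argument at would-be singular points $x_i\to 0$, applying the strong maximum principle for singular minimal hypersurfaces (Ilmanen's theorem) to conclude $|\partial^* F_0|=|\partial^* E_0|$ after blow-up, and then using Allard's $\varepsilon_0$-regularity to contradict the accumulation of singular points. Second, the rescaling argument only yields $\partial E\cap B_\delta=\partial F\cap B_\delta$ for a small $\delta$; you state the full conclusion $E\cap B_1=F\cap B_1$ without the continuation/open-and-closed argument (the paper's Step 4) which propagates the coincidence outward to $B_1$ by combining the half-space tangent-plane property, the constancy theorem, elliptic regularity, and the smooth maximum principle together with unique continuation at tangential touching points.
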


\begin{oss}
\label{remark}
Under the assumed condition on a tangent cone, by L.~Simon's renowned result \cite{Sim_unique_tan}, $|\partial^* E|$ possesses a unique tangent cone at $0$ (which has to be the one about which the regularity and multiplicity hypotheses are made).
\end{oss}

\begin{proof}
\emph{Step 1}. We begin by proving that $\overline {\partial^* F}$ is smooth in $B_r\setminus \{0\}$ for some $r>0$. Let $\Sigma \subset \overline{\partial^* F}$ denote the singular set of $\overline{\partial^* F}$. Arguing by contradiction, assume that $x_i \to 0$, $x_i \in\Sigma$. Letting $\rho_i = |x_i|$, we consider the sequence of dilations $x \mapsto \frac{x}{\rho_i}$ and take a blow up of $F$ at $0$ by setting $F_{0,\rho_i} = \frac{F}{\rho_i}$ and taking a subsequential limit $F_0$ of  $F_{0,\rho_i}$. By the assumption that $F \subset E$ we have that $F_0 \subset E_0$, where $E_0$ is the blow up of $E$ at $0$ obtained by taking the limit for said subsequence of dilations (as remarked above, the blow up for $E$ at $0$ is independent of the sequence of dilations). The stationarity property of $F$ with respect to $J_\lambda$ translates into stationarity of $F_{0,\rho_i}$ with respect to $J_{\rho_i \lambda}$, which implies that $F_0$ is stationary for the perimeter (equivalently, $J_0$). Similarly, $E_0$ is perimeter-stationary, that is, both $|\partial^* E_0|$ and $|\partial^* F_0|$ are stationary varifolds in $\R^{n+1}$. (We remark that both $|\partial^* E_0|$ and $|\partial^* F_0|$ are non-zero, since the origin is in the support of both $|\partial^* E|$ and $|\partial^* F|$ and thus both densities are $\geq 1$ by the monotonicity formula.)

More precisely, by Lemma \ref{lem:conv_of_minimisers}, $E_0$ and $F_0$ are perimeter minimisers in any compact set $K \subset \R^{n+1}$, for their own boundary condition (assumed in the complement of $K$). Clearly, $0\in \text{spt}|\partial^* E_0|\cap \text{spt}|\partial^* F_0|$. Then the singular maximum principle \cite[Theorem A (iii)]{Ilm} implies that $\text{spt}|\partial^* E_0| = \text{spt}|\partial^* F_0|$, and thus $|\partial^* E_0| = |\partial^* F_0|$. (Alternatively, one may use the maximum principle in the form given in \cite{Sim_max_p}.)

Lemma \ref{lem:conv_of_minimisers} (see Remark \ref{oss:mult_1_tangents}) also gives that $|\partial^* F_{0,\rho_i}|$ converge (as varifolds) to $|\partial^* F_{0}|$. By the choice of dilations, and by Allard's interior regularity theorem, see \cite{All_interior}, the points $\frac{x_i}{\rho_i}$ lie in $\partial B_1$ and have density $\Theta(\|\partial^* F_{0,\rho_i}\|,\frac{x_i}{\rho_i}) \geq 1+\varepsilon_0$, where $\varepsilon_0>0$ is the dimensional constant in Allard's regularity theorem. This contradicts the hypothesis that the density of $|\partial^* E_{0}|=|\partial^* F_{0}|$ is $1$ at any point distinct from $0$ (since $|\partial^* E_0|$ is a regular cone by assumption). We have therefore established the smoothness of $\overline{\partial^* F}$ in $B_r\setminus \{0\}$ for some $r>0$.

\medskip

\emph{Step 2}. As remarked above, $|\partial^* E_0|$ is the unique tangent cone to $|\partial^* E|$ at $0$. This also implies that  $|\partial^* F_0|=|\partial^* E_0|$ is the unique tangent cone for $|\partial^* F|$ at $0$ (since, given any blow up sequence, the resulting  blow up of $F$ at $0$ is contained in $E_0$, and the maximum principle implies, as above, that the two blow up sets must coincide). In particular (see \cite[Section 7]{Sim_unique_tan}), we are able to write $\partial E \cap (B_\delta \setminus \{0\})$ and $\partial F\cap (B_\delta \setminus \{0\})$, for sufficiently small $\delta>0$, as graphs of $C^2$ functions over the common cone $C_\delta = C \cap ( B_\delta \setminus \{0\})$, where $C = \overline{\partial^* E_0}$, as follows:  
\begin{equation}
 \begin{aligned}
 \label{eq:graphicality cond}
 &\partial E \cap (B_\delta \setminus \{0\}) 
 = \text{gr}_{C_\delta} u, \text{ with } u \in C^2(C_\delta;\mathbb{R}), \\
 &\partial F \cap (B_\delta \setminus \{0\}) = \text{gr}_{C_\delta} v \text{ with } v \in C^2(C_\delta;\mathbb{R}), \\
  &\lim_{|x|\to 0 } \Big(\frac{|u(x)|}{|x|} +  |\nabla u(x)|\Big) =  0 ,\\
 &\lim_{|x|\to 0} \Big(\frac{|v(x)|}{|x|} +  |\nabla v(x)| \Big) =0.
 \end{aligned}
\end{equation}

Taking the identification of $\mathbb{R}$ with $(T C_\delta)^\perp$ so that the orientation is inward (for $E_0$), we have, in view of $E \subset F$ and the fact that $|\partial^* E|$ and $|\partial^* F|$ are stationary for $J_\lambda$,
\[u\leq v \text{ and }\mathcal{M}_C u = \lambda \text{det}(Id - u A_C), \: \mathcal{M}_C v = \lambda \text{det}(Id - v A_C).\] 
Note that due to \eqref{eq:graphicality cond} the PDE for $u$ and $v$ satisfies the estimates \eqref{eq:est for M_C} in $C_\delta$ and from standard elliptic estimates, see also \cite[Section 1]{Sim_unique_tan}, we deduce that $|x| |\nabla^2 u(x)| + |x| |\nabla^2 v(x)| \to 0 $ as $|x| \to 0$ hence the radial decay \eqref{eq:radial decay} is satisfied. In particular, we may consider $h = v-u \geq 0$ and from (\ref{eq: linearisation}) we have that $h$ satisfies the linear PDE 
\[L_C h = A_1 \cdot \nabla^2 h  + \dfrac{1}{|x|} A_2 \cdot \nabla h + \dfrac{1}{|x|^2}A_3h,\] 
where $A_1, A_2, A_3 \xrightarrow[|x| \to 0]{} 0.$ Thus for any $K \subset \subset C_\delta$ we can apply the Harnack inequality to get that 
\[\sup_K h \leq C_K \inf_K h.\] 
Hence either $h > 0$ on $\overline K$ or $h \equiv 0. $ Since $K$ is arbitrary, we must have either $h\equiv 0$ on $C_\delta$, or $h>0$ on $C_\delta$ (and $h=0$ at $0$). We will next rule out the second occurrence.

\medskip

\emph{Step 3}. The minimising property of $E_0$ implies that $C$ is a stable minimal cone and thus all positive Jacobi fields are of the form \eqref{eq: pos Jacobi fields}. To prove that $u\equiv v$, we will construct a non-existent positive Jacobi field on $C \setminus \{0\}$ under the contradiction assumption that $h >0 $ on $C_\delta$. We argue as in \cite[Lemma 1.20]{HS}.

From the property that $h \to 0$ as $|x| \to 0$ we can construct a sequence of $\rho'_j \searrow 0$ such that 
\[\sup _{C_{\rho'_{j+1}}} h< \sup_{C _{\rho'_j}}h.\]
Let $x_j$ be the points where $\displaystyle \sup_{C_{\rho'_j}}h$ is achieved and set $r_j = |x_j|.$ Then $r_j \searrow 0$ (since $r_j \in (\rho'_{j+1}, \rho'_j)$) and
$\sup_{C_{r_j}}h = \sup_{\partial C_{r_j}}h.$ 
We define $h_j(x) = h(r_j x)$, for $x \in C_{\frac{\delta}{r_j}}$ and we have that
\[\sup_{C_1} h_j = \sup _{\partial C_1}h_j.\]
Let $x'_j \in \partial C_1$ where $\displaystyle \sup_{C_1} h_j$ is achieved and set 
$f_j(x) = \dfrac{h_j(x)}{M_j} , \text{ for } x \in C_{\frac{\delta}{r_j}} ,$
where $M_j = h_j(x'_j).$ 
From the PDE for $h$ we have that $f_j$ satisfies the following PDE 
\[L_C f_j = \tilde A^{(1)}_j \cdot \nabla^2 f_j + \frac{1}{|x|}\tilde A^{(2)}_j \cdot \nabla f_j + \frac{1}{|x|^2}\tilde  A^{(3)}_j f_j, \]
where $\tilde A^{(i)}_j (x) = A_i (r_j x)$ for $x \in C_{\frac{\delta}{r_j}} $ and $i= 1,2,3.$ 

Fix a set $K \subset \subset C \setminus \{0\}$ and let $K'$ be another set with $K \subset \subset K' \subset \subset C \setminus \{ 0\}$ and $x'_j \in K'.$ Notice that, from the standard regularity theory for CMC hypersurfaces, we have that $u, v \in C^\infty$ thus the coefficients $A_i$, for $i=1,2,3$ of the PDE are in $C^{0, \alpha}(K')$ and since $\tilde A_j^{(i)}$ are rescalings of $A_i$ we have that $[\tilde A_j^{(i)}]_{\alpha; K'} \leq M_1 r_j ^\alpha$, where $M_1$ is a constant independent of $j$ and $[\cdot]_{\alpha ; K'}$ is the H{\"o}lder semi-norm in $K'$ with exponent $\alpha.$ In particular, if we combine with \eqref{eq:radial decay}, we conclude that $||\tilde A_j^{(i)} ||_{0, \alpha ; K'} \to 0$, as $j \to \infty$ for $i=1,2,3$, where $||f||_{l,\alpha; K'} = ||f||_{l; K'} + \displaystyle\max_{|\beta|=l}[D^\beta f]_{ \alpha;K'}$ denotes the H{\"o}lder norm in $C^{l,\alpha}$. Thus from the $C^{2,\alpha}$-Schauder estimates, see Theorem 6.1 of \cite{GT}, we get that
\[||f_j||_{2,\alpha ; K} \leq M_3 ||f_j||_{0;K'},\]
where $M_3$ is a constant independent of $j$. 

From the Harnack inequality on $K'$ and since $x_j \in K'$ and $f_j(x_j) = 1$ we have that $||f_j||_{0 ; K'} \leq C_{K'} \inf_{K'} f_j \leq C_{K'}$ where $C_{K'}$ is a constant that depends on $K'$. Putting everything together we get that 
\[ ||f_j||_{2, \alpha ; K} \leq M_4, \]
where $M_4$ is a constant independent of $j$ (and depending on $K'$). From Arzel\`a-Ascoli theorem, after a diagonal argument and passing to a subsequence that we still index with $j$, we have that  $f_j \xrightarrow[C^2_{\text{loc}}(C\setminus \{0\})]{}f \in C^{2,\alpha}(C \setminus \{0 \}). $
From the uniform convergence of $\tilde A^{(i)}_j$ on compact sets to zero, for $i = 1,2,3$, we get that $L_C f = 0 $
in $C \setminus \{0\}$. Furthermore (again passing to a subsequence), we have that $x'_j \to x_0 \in \partial C_1$ and so $f(x_0) =1 $. Thus from Harnack's inequality $f>0.$ 

In conclusion, we have constructed a positive solution of $L_C f =0$, defined on $C \setminus \{ 0\}$ for a stable minimal cone $C$ of $\R^{n+1}$, and satisfying
\[\sup_{C_1 } f = \sup_{\partial C_1}f.\] 
The latter contradicts (\ref{eq: pos Jacobi fields}) and thus proves that $ \partial E \cap B_\delta = \partial F \cap B_\delta.$ 

\medskip

\emph{Step 4}. Finally we show that $E \cap B_1=  F \cap B_1$. Let \[r_0 = \sup\{r :  \partial E \cap B_r = \partial F \cap B_r\}\]
and note that the set over which we take the supremum is non-empty due to the existence of $\delta$, from the previous step, and it is in fact a maximum. Assume for the contrary that $r_0 < 1$ and let $x_0 \in \partial B_{r_0}\cap \partial F \cap \partial E.$  Then by virtue of Remark \ref{oss:mult_1_tangents}, we can consider a varifold tangent cone for $|\partial^* F|$ at $x_0$, of the form $|\partial^* G|$, with $|\partial^* G|$ stationary (for the perimeter functional), and with $\text{spt}|\partial ^* G|$ contained in a half space thanks to the condition $F\subset E$ (more precisely, the half space whose boundary is the tangent plane to $|\partial^* E|$ at $x_0$).
Then from Theorem 36.5 of \cite{Simon_notes} we have that $|\partial^* G|$ is a plane hence the regularity theory implies that we can find a neighborhood $B_{\rho'}(x_0)$ where $\partial F$ is smooth and $\partial F, \partial E$ meet tangentially at $x_0.$ Since $F \subset E$ and due to the variational equations satisfied by $J_\lambda$ the mean curvature vectors point in the same direction at $x_0$ thus the standard maximum principle implies that $\partial E \cap B_{\rho'}(x_0)$ coincides with $\partial F \cap B_{\rho'}(x_0).$ In particular, since $x_0$ is arbitrary and $\partial B_r \cap \partial F$ is compact we can find $\epsilon >0$ such that $\partial E \cap B_{r_0+ \epsilon} = \partial F \cap B_{r_0+\epsilon}$ contradicting the choice of $r_0.$ Thus $r_0 =1$ and we conclude that $E \cap B_1 = F \cap B_1.$ 
\end{proof}

\section{Approximation}
\label{approx}

Lemma \ref{lem:unique_minimiser} and Theorem \ref{app thm} below will establish in particular the approximation results stated in the introduction, Theorems \ref{thm:main_dim8} and \ref{thm:main_gendim}. (One should identify $\hat{B}-p$ in Theorem \ref{thm:main_gendim} with the ball $B_R$ below.)

We assume that $E\subset \R^{n+1}$ satisfies the following properties. The topological boundary agrees with $\overline{\partial^* E}$ and $T=\partial E$ contains $0$, the hypersurface $(T \setminus \{0\}) \cap B_R$ is smooth for some $R>0$ (so the origin is an isolated singularity for $T$), $E$ minimises $J_\lambda$ in $B_R$ among Caccioppoli sets that coincide with $E$ in $B_{2R}\setminus B_R$, a tangent cone to $|\partial^* E|$ at $0$ is regular (which means, it is smooth away from $0$ and has multiplicity $1$ on its regular part).  In view of Remark \ref{remark}, $|\partial ^* E|$ thus possesses a unique tangent cone at $0.$

\begin{oss}
\label{oss:n=7}
We note that if $n=7$ these properties can be fulfilled whenever we have a Caccioppoli set that minimises $J_\lambda$ locally. To begin with, one chooses a system of coordinates centred at a singular point, and $R$ smaller than the distance of this to any other singular point (which is possible thanks to the interior regularity theory for minimisers). Moreover (again by the regularity theory) any tangent cone must be smooth away from the origin (for otherwise, the radial invariance would give a singular set of dimension at least one). Finally, any tangent cone must have multiplicity $1$ on its regular part since the rescaled varifolds $|\partial E_{\rho_i, 0}|$ converge as varifolds to $|\partial^* E_0|$ (see Remark \ref{oss:mult_1_tangents}). 
\end{oss}

It may not be true, in the above situation, that $E$ is the unique minimiser of $J_\lambda$, among Caccioppoli sets that coincide with $E$ in $B_{2R}\setminus B_R$. However, by taking a slightly smaller $R$ (which preserves all the assumptions above), we can ensure said uniqueness, thanks to a standard argument that we now recall.

\begin{lem}
\label{lem:unique_minimiser}
Let $E, T$ be as above. If $R^\prime<R$ then $E$ is the unique minimiser of $J_\lambda$ among sets that coincide with $E$ in $B_{2R} \setminus B_{R^\prime}$. (And therefore also among sets that coincide with $E$ in $B_{2R^\prime} \setminus B_{R^\prime}$.)
\end{lem}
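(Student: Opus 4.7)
My plan is to combine the classical submodularity trick for minimisers with a strict maximum principle for smooth CMC hypersurfaces. Let $F$ be any minimiser of $J_\lambda$ coinciding with $E$ on $B_{2R}\setminus B_{R'}$. Since $R'<R$, the inclusion $B_{2R}\setminus B_{R'}\supset B_{2R}\setminus B_R$ means $F$ also coincides with $E$ on $B_{2R}\setminus B_R$, so it is a competitor for the problem defining $E$; symmetrically $E$ is a competitor for the problem defining $F$, giving $J_\lambda(E)=J_\lambda(F)$ and making both minimisers in $B_R$ with the same boundary data. The standard submodularity $\mathrm{Per}(E\cap F)+\mathrm{Per}(E\cup F)\leq\mathrm{Per}(E)+\mathrm{Per}(F)$ together with $|E\cap F|+|E\cup F|=|E|+|F|$ yields $J_\lambda(E\cap F)+J_\lambda(E\cup F)\leq 2J_\lambda(E)$; because $E\cap F$ and $E\cup F$ are admissible and $E$ is a minimiser, both inequalities are forced to equality, so $E\cap F$ and $E\cup F$ are themselves minimisers. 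Interior regularity then ensures that their boundaries are smoothly embedded CMC hypersurfaces with mean curvature vector $\lambda$ times the inward unit normal, outside closed singular sets of Hausdorff dimension at most $n-7$; since $E\cap F=E\cup F=E$ on the annulus $B_R\setminus B_{R'}$ where $\partial E$ is smooth, these singular sets lie in $B_{R'}$.

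I would then consider the set $U\subset B_R\setminus(\{0\}\cup\Sigma_{E\cap F})$ of points $x$ at which $E\cap F=E$ as Caccioppoli sets in some neighbourhood of $x$. This set is open, non-empty (it contains the annulus $B_R\setminus B_{R'}$), and closed within $B_R\setminus(\{0\}\cup\Sigma_{E\cap F})$: at any limit point $x$, both $\partial E$ and $\partial(E\cap F)$ pass through $x$ smoothly with constant mean curvature $\lambda$, and the inclusion $E\cap F\subset E$ aligns their inward normals at the resulting tangential contact, so the classical Hopf-type strict maximum principle forces local coincidence. Because $\{0\}\cup\Sigma_{E\cap F}$ has Hausdorff dimension at most $n-7<n$ in the $(n+1)$-dimensional ball $B_R$, its complement is connected, so $U$ fills the complement and $E\cap F=E$ a.e.~in $B_R$. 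The symmetric argument with $E\subset E\cup F$ gives $E\cup F=E$ in $B_R$. Together these force $F=E$ in $B_R$, and combined with $F=E$ on $B_{2R}\setminus B_{R'}$ we conclude $F=E$ throughout $B_{2R}$.

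The main technical point is the closedness step: making rigorous the propagation of local coincidence from the annulus through the entire smooth region. This relies on the one-sided inclusion $E\cap F\subset E$ (which provides the correct relative orientation of normals needed to apply the Hopf strict maximum principle at points of tangential contact) and on the thinness of the singular set (which preserves connectedness of the smooth region where propagation occurs). A minor technicality, easily handled, is that if $\partial E$ in $B_R\setminus\{0\}$ is disconnected, one argues on each component separately, each being reached by the annulus provided $R$ is small enough so that $\partial E\cap B_R$ mirrors the component structure of the link of the tangent cone at $0$.
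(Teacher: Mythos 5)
The key difference from the paper is that you introduce the sets $E\cap F$ and $E\cup F$ via the submodularity inequality and then argue by a Hopf-type strict maximum principle combined with an open--closed--connected argument on $B_R\setminus(\{0\}\cup\Sigma_{E\cap F})$, whereas the paper works directly with the competitor $E'$ (no containment is needed) and propagates coincidence by shrinking the radius at which $\overline{\partial^* E'}=\overline{\partial^* E}$, using the tangent-cone analysis at points of $\partial B_r\cap\overline{\partial^* E}$, Allard regularity, and unique continuation for the CMC equation. Your choice of tools is natural --- the submodularity trick and the tangential maximum principle both appear elsewhere in the paper (in Step~2 of the proof of Theorem~\ref{app thm} and Step~4 of Proposition~\ref{Prop:sing_max_princ}) --- but here they do not quite close the argument.

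The gap is in the closedness step. You assert that at any limit point $x\in\overline{U}\setminus U$ with $x\neq 0$, $x\notin\Sigma_{E\cap F}$, ``both $\partial E$ and $\partial(E\cap F)$ pass through $x$ smoothly'' in tangential contact. This is not justified and is not automatic. The dichotomy at such an $x$ is actually threefold. If $x\notin\overline{\partial^* E}$ and $x\notin\overline{\partial^*(E\cap F)}$, both sets have constant density in a neighbourhood and the approaching sequence forces $x\in U$. If $x\in\overline{\partial^* E}$, then indeed one can show $x\in\overline{\partial^*(E\cap F)}$ and Hopf applies. But there is a third possibility: $x\in\overline{\partial^*(E\cap F)}$ while $x$ lies in the topological interior of $E$ (so $\partial E$ does not pass through $x$ at all). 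This occurs precisely when $\overline{\partial^*(E\cap F)}$ detaches from $\partial E$ and runs through the interior of $E$. At such a point there is no tangential contact between $\partial E$ and $\partial(E\cap F)$, the Hopf principle has nothing to act on, and the chain of local coincidences breaks. Excluding this scenario requires a genuinely separate argument --- for instance ruling out closed interior components of $\overline{\partial^*(E\cap F)}$ via Alexandrov's theorem together with an energy comparison, or (as the paper does) shrinking the sphere $\partial B_r$ on which coincidence first fails and invoking unique continuation, which does not need a one-sided inclusion or tangential contact and so sidesteps the issue entirely. Until the detachment case is handled, the open--closed argument does not establish $E\cap F=E$ in $B_R$.
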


\begin{proof}
Let $R^\prime<R$.
Clearly, $E$ minimises $J_\lambda$ in $B_{R^\prime}$ among Caccioppoli sets that coincide with $E$ in $B_{2R}\setminus B_{R^\prime}$. Assume that there exists a Caccioppoli set $E^\prime \neq E$ that minimises $J_\lambda$ in $B_{R^\prime}$ among Caccioppoli sets that coincide with $E$ in $B_{2R}\setminus B_{R^\prime}$. In particular $E^\prime$ coincides with $E$ in $B_{2R} \setminus B_R$, and on $E^\prime$ the energy $J_\lambda$ attains the same value as it does on $E$. Therefore $E^\prime$ is a minimiser of $J_\lambda$ in $B_R$, among Caccioppoli sets that coincide with $E$ in $B_{2R}\setminus B_R$. As such, its reduced boundary must enjoy the optimal regularity of minimisers, that is, $\overline{\partial^* E^\prime} \cap B_R$ is a smooth hypersurface (with mean curvature $\lambda$) away from a set $\Sigma \subset \overline{\partial^* E^\prime} \cap B_R$ with $\text{dim}_{\mathcal{H}} \Sigma \leq n-7$. We aim to prove that $\overline{\partial^* E^\prime}$ coincides with $\overline{\partial^* E}$ (which is in contradiction with $E^\prime \neq E$ and $E^\prime = E$ in $B_{2R}\setminus B_{R^\prime}$).

We define $r \leq R^\prime$ by
\[r = \inf \{t: \overline{\partial^* E^\prime} =\overline{\partial^* E} \text{ in } B_{2R} \setminus B_t.\}\]
and note that this is a minimum. 
The conclusion will follow upon establishing that $r=0$. Assume $r>0$. We remark that for $p \in \partial B_{r} \cap \overline{\partial^* E}$ we must have that there exists a unique tangent cone to $|\partial^* E^\prime|$ at $p$, and it must coincide with the hyperplane that is tangent to $\partial E$ at $p$. (This follows from $\overline{\partial^* E^\prime} =\overline{\partial^* E}$ in $B_{2R} \setminus B_r$ and the smoothness of $\overline{\partial^* E}$ around $p$.) The regularity theory implies that $\overline{\partial^* E^\prime}$ is smooth in an open ball $B^{n+1}_\rho(p)$ for some $\rho>0$. Recall however that 
\[\overline{\partial^* E^\prime} = \big(\overline{\partial^* E^\prime} \cap B_{r} \big) \cup \big(\overline{\partial^* E} \cap (B_{2R} \setminus B_{r}) \big),\]
and we have established that this is smooth in $B_\rho(p)$. Unique continuation implies that $\overline{\partial^* E^\prime} \cap B_{r}$ coincides with $\overline{\partial^* E}  \cap B_{r}$ in $B_\rho(p)$. 

As $p \in \partial B_{r} \cap \overline{\partial^* E}$ is arbitrary and $\partial B_{r} \cap \overline{\partial^* E}$ is compact, it follows that $\overline{\partial^* E^\prime}$ coincides with $\overline{\partial^* E}$ in $B_{2R} \setminus B_{r - \delta}$ for some $\delta>0$, contradicting the choice of $r$. Hence $r=0$ and $E^\prime = E$ in $B_{2R}$.
\end{proof}

\begin{oss}
By taking $R^\prime$ sufficiently small we also ensure that $\lambda<\frac{n}{R^\prime}$. Therefore, upon dilating $B_{2 R^\prime}$ to $B_2$, we have that the working assumptions stated in the next theorem are fulfilled.
\end{oss}

\begin{thm}\label{app thm}
Let $E$ a set of finite perimeter in $B_2$. Assume that $T=\partial E = \overline{\partial^* E}$ contains $0$, the hypersurface $T  \cap (B_2 \setminus \{0\})$ is smooth, $E$ is the unique minimiser for $J_\lambda$ in $B_2$ among Caccioppoli sets that coincide with $E$ in $B_{2}\setminus B_1$, $\lambda<n$.

Given $r \in (0,1)$, there exists a sequence of sets $E_j$ that have finite perimeter in $B_2$, such that: $\partial E_j \cap B_r$ is smooth for each $j$, it has constant mean curvature $\lambda \nu_{E_j}$, where $\nu_{E_j}$ is the inward unit normal to $E_j$, $E_j \subset E$, $E_j \to E$ and $\partial E_j$ converge to $\partial E$ smoothly on any $\Omega \subset \subset B_r \setminus \{0\}$.
\end{thm}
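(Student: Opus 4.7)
The plan follows the sketch given in the introduction. Since $\partial E \cap (B_2\setminus\{0\})$ is smooth, I fix radii $0<r_1<r<r_2<1$ and a cutoff $\chi\in C^\infty_c(B_{r_2}\setminus\overline{B_{r_1}};[0,1])$ identically $1$ in a neighbourhood of $\partial B_r$; for a sequence $\epsilon_j\searrow 0$ I let $\widehat{E}_j$ be the inward perturbation of $E$ obtained by moving $\partial E$ along $-\epsilon_j \chi \nu_E$, where $\nu_E$ is the inward unit normal, smoothly extended in a tubular neighbourhood of $\partial E \cap \mathrm{spt}\,\chi$. Then $\widehat{E}_j\subsetneq E$, $\widehat{E}_j=E$ outside $B_{r_2}\setminus\overline{B_{r_1}}$, $\partial\widehat{E}_j$ is smooth and transverse to $\partial B_r$, and $\widehat{E}_j\to E$ in $L^1$. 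Since $\lambda<n\le n/r$, Theorem~\ref{thm:variational_simplified} (applied on $B_r\subset B_2$ after the trivial rescaling of its statement) provides for each $j$ a minimiser $E_j$ of $J_\lambda$ in $B_r$ coinciding with $\widehat{E}_j$ on $B_2\setminus B_r$, whose reduced boundary is a smoothly embedded CMC hypersurface (with mean curvature $\lambda\nu_{E_j}$) off a closed singular set $\Sigma_j\subset B_r$ with $\dim_{\mathcal H}\Sigma_j\le n-7$, and satisfying the sharp conclusion $\overline{\partial^* E_j}\cap\partial B_r = \partial\widehat{E}_j\cap\partial B_r$.

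The inclusion $E_j\subset E$ and the convergence $E_j\to E$ are fairly routine. For the former I compare minimality inequalities: $E_j\cap E$ is admissible for the Plateau problem solved by $E_j$, and $E_j\cup E$ is admissible for the one solved by $E$ (both using $\widehat{E}_j\subset E$); summing against the submodularity of perimeter and additivity of volume forces equality throughout, so (replacing $E_j$ by $E_j\cap E$ if needed) $E_j\subset E$. For the convergence, any subsequential $L^1$-limit $E_\infty$ coincides with $E$ on $B_2\setminus B_r$, and the argument of Lemma~\ref{lem:conv_of_minimisers} adapted to variable boundary data --- replacing a competitor $F$ for $E_\infty$ by $F_j=(F\cap B_r)\cup(\widehat{E}_j\cap(B_2\setminus B_r))$ and passing to the limit in the minimality inequality --- shows that $E_\infty$ is itself a $J_\lambda$-minimiser with the same boundary datum as $E$, so $E_\infty=E$ by uniqueness. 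The same argument gives $|\partial^* E_j|\to|\partial^* E|$ as varifolds on $B_r$, whence Allard's regularity theorem and Schauder estimates for the CMC equation yield smooth graphical convergence on compact subsets of $B_r\setminus\{0\}$.

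The heart of the proof is to show $\Sigma_j=\emptyset$ for all large $j$, and I argue by contradiction. If $p_j\in\Sigma_j$ along a subsequence, Allard's density gap together with the smooth convergence away from $0$ forces $p_j\to 0$. The main obstacle is the case $p_j=0$: here $0\in\overline{\partial^* E_j}$, and Proposition~\ref{Prop:sing_max_princ} (whose hypotheses --- $E_j\subset E$ both $J_\lambda$-minimising, $\partial E$ smooth in $B_r\setminus\{0\}$, regular tangent cone of $|\partial^* E|$ at $0$ --- are all in place) yields $E_j\cap B_r=E\cap B_r$. Combined with $E_j=\widehat{E}_j\subsetneq E$ in $B_{r_2}\setminus\overline{B_r}$, this forces $\partial^* E_j$ to contain an $n$-dimensional strip of $\partial B_r$ (the jump locus between $E$ inside and $\widehat{E}_j$ outside), contradicting the $(n-1)$-dimensional identity $\overline{\partial^* E_j}\cap\partial B_r = \partial\widehat{E}_j\cap\partial B_r$; this is precisely where the sharp `no-touching' conclusion of Theorem~\ref{thm:variational_simplified} becomes indispensable. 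In the remaining case $p_j\neq 0$, I set $\rho_j=|p_j|$ and consider the blow-ups $F_j=\rho_j^{-1}E_j$, which are $J_{\rho_j\lambda}$-minimisers on domains exhausting $\R^{n+1}$, with $F_j\subset\rho_j^{-1}E\to E_0$ where $\partial E_0=C$ is the (unique, regular) tangent cone of $|\partial^* E|$ at $0$. A variant of Lemma~\ref{lem:conv_of_minimisers} for varying CMC parameters $\rho_j\lambda\to 0$ extracts a subsequential limit $F_\infty$ which is a global perimeter minimiser in $\R^{n+1}$ with $F_\infty\subset E_0$; the Hardt--Simon foliation theorem \cite{HS} then forces $\partial F_\infty$ to be either $C$ itself or a leaf of the foliation on the $E_0$-side of $C$, both of which are smooth away from $0$. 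But up to subsequence $p_j/\rho_j\to q\in\partial B_1$, and the Allard density jump $\Theta(|\partial^* F_j|,p_j/\rho_j)\ge 1+\varepsilon_0$ survives the varifold limit, producing a singular point of $\partial F_\infty$ at unit distance from the origin --- a contradiction. Hence $\Sigma_j=\emptyset$ for all large $j$, completing the proof.
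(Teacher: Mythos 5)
Your argument follows the same three-step architecture as the paper's proof: inward perturbation of the boundary datum, minimisation of $J_\lambda$ with the perturbed datum, and a contradiction argument (Hardt--Simon blow-up plus the singular maximum principle, Proposition~\ref{Prop:sing_max_princ}) to kill the singular set. The only genuine structural difference is that you perturb in an annulus around $\partial B_r$ and pose the Plateau problem in $B_r$, whereas the paper perturbs around $\partial B_1$ and minimises in $B_1$; both work, since $E$ also uniquely minimises in $B_r\subset B_1$, and your $E_j\cup E$ and $E_j\cap E$ comparison for $E_j\subset E$, together with the uniqueness argument for $E_j\to E$, is correct. Likewise, making the ``$p_j=0$'' case explicit by exhibiting the $n$-dimensional strip in $\partial^*E_j\cap\partial B_r$ is exactly the contradiction the paper leaves tacit when it invokes Proposition~\ref{Prop:sing_max_princ}.

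Two smaller points deserve attention. First, a sign slip: if $\nu_E$ is the \emph{inward} unit normal then the inward perturbation should move $\partial E$ along $+\epsilon_j\chi\nu_E$, not $-\epsilon_j\chi\nu_E$. Second, and more substantively, you appeal to Theorem~\ref{thm:variational_simplified}, whose hypotheses require $\partial\widehat{E}_j$ to be smooth \emph{and transverse} to $\partial B_r$; transversality of $\partial E$ with $\partial B_r$ is not among the assumptions of the theorem (one is handed an arbitrary $r\in(0,1)$), and the inward push does not by itself guarantee that $\partial\widehat{E}_j$ crosses $\partial B_r$ transversely if $\partial E$ was tangent there. The fix is cheap: use Theorem~\ref{thm:variational} (which needs no transversality) together with Lemma~\ref{lem:stationarity}, whose conclusion $\mathcal{H}^n(\partial^*E_j\cap\partial B_r)=0$ for $\lambda<n$ is all you need for the strip contradiction --- so the sharp no-touching conclusion is sufficient but not, as you assert, ``indispensable''. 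Alternatively you could move the minimisation radius to a Sard-generic $r''\in(r,r_2)$ where transversality holds. With either repair the proof is complete.
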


\begin{oss}
We point out that the sequence $E_j$ will be constructed without any dependence on $r$; however, we will only prove that the boundaries $\partial E_j \cap B_r$ are smooth for sufficiently large $j$, with dependence on $r.$
\end{oss}

\begin{proof}
\emph{Step 1}. The first step is to perturb the boundary condition $E$ inwards, and then use this new boundary condition to define $E_j$. The vector field $\nu_E$ is smooth in $(B_2 \setminus \{0\}) \cap \partial E$. Let $d(\cdot) = \text{dist}(\cdot,\partial E)$ be the signed distance function to $\partial E$ (taken to be positive in $E$ and negative in its complement) and consider a tubular neighborhood $\mathcal{N}_\rho$ of size $\rho>0$ around $\partial E \cap (B_{\frac{3}{2}} \setminus B_{\frac{1}{2}})$. Then the gradient of $d$ is a smooth extension of $\nu_E$ to $\mathcal{N}_\rho$. Let $\chi$ be a smooth function on $B_2$ that is equal to $1$ in $(B_{\frac{5}{4}} \setminus B_{\frac{3}{4}}) \cap \mathcal{N}_{\frac{\rho}{2}}$ and with support contained in $(B_{\frac{3}{2}} \setminus B_{\frac{1}{2}}) \cap \{|d|<\frac{3}{4}\rho\}$. Let $X=\chi \nabla d$, then $X$ extends $\nu_E$ and we may consider the flow $\phi_t(x)$ of $X$. (We view $X$ as a vector field in $B_2$.) For any $t\in [0, \delta)$, with $\delta>0$ sufficiently small, $\phi_t(E) \subset E$. By construction $\phi_t(\partial E \cap \partial B_1)$ is disjoint from $\partial E \cap \partial B_1$ for all $t\in (0, \delta)$, and $\phi_t(E) \cap (B_{\frac{5}{4}} \setminus B_{\frac{3}{4}})$ is strictly contained in $E \cap (B_{\frac{5}{4}} \setminus B_{\frac{3}{4}})$.

\medskip

The sequence $E_j$ in the statement is built with the boundary condition $E_j = \phi_{t_j}(E)$ in $B_2 \setminus B_1$, for a sequence $t_j \to 0$. Namely, from Theorem \ref{thm:variational} we may define $E_j$ to be a minimiser of $J_\lambda$ for said boundary condition; furthermore, we have that $\overline {\partial^* E_j} \cap \overline {B_1}$ intersects $\partial B_1$ only at its boundary, and is a smoothly embedded CMC hypersurface-with-boundary away from a codimension $7$ set. 
 
\medskip

\emph{Step 2}. We show first that $E_j \to E$ as $j\to \infty$ as sets of finite perimeter (therefore $\llbracket E_j \rrbracket \to \llbracket E \rrbracket $ as currents, hence $\partial \llbracket E_j\rrbracket \to \partial  \llbracket E \rrbracket$ as well). This follows from the uniqueness property of $E$, as we now show. To begin with, we have $J_\lambda(E_j) \leq J_\lambda(\phi_{t_j}(E))$ (by the minimising property of $E_j$). By smoothness of $X$, using the area formula we find that $J_\lambda(\phi_{t_j}(E)) \to J_\lambda(E)$ as $j\to \infty$. In particular, there exists a uniform upper bound for $J_\lambda(\phi_{t_j}(E))$, and thus (since $|E_j|\leq |B_2|$) a uniform upper bound for $\text{Per}_{B_2}(E_j)$. Standard BV-compactness then gives the existence of a subsequential limit $E_j\to D$ with $|E_j|\to |D|$ and (by lower semi-continuity of perimeter) $J_\lambda(D) \leq \liminf_{j \to \infty} J_\lambda(E_j)$. Recalling the previous considerations, $J_\lambda(D) \leq \liminf_{j \to \infty} J_\lambda(E_j) \leq J_\lambda(E)$. Finally, noting that $E_j \cap (B_2 \setminus \overline{B_1}) = \phi_{t_j}(E) \cap (B_2 \setminus \overline{B_1}) \to E \cap (B_2 \setminus \overline{B_1})$, we obtain that $D=E$ in $B_2 \setminus \overline{B_1}$ and therefore $D$ is a minimiser (among sets with finite perimeter that coincide with $E$ in $B_2 \setminus \overline{B_1}$). The uniqueness hypothesis on $E$ gives $E=D$.

\medskip

Next we will prove that $E_j \subset E$, for each given $j$. Considering the sets with finite perimeter $E_j \cap E$ and $E_j \cup E$, we have  $\llbracket E_j \cap E \rrbracket + \llbracket E_j \cup E \rrbracket = \llbracket E_j \rrbracket + \llbracket E \rrbracket$, so that $\partial \llbracket E_j \cap E \rrbracket + \partial \llbracket E_j \cup E \rrbracket = \partial \llbracket E_j \rrbracket + \partial \llbracket E \rrbracket$. Clearly we also have $E_j \cap E \subset E_j \cup E$. This implies that at $\mathcal{H}^n$-a.e.~$x \in \partial^* (E_j \cap E) \cap \partial^* (E_j \cup E)$ one must obtain the same half-space as the unique blow up at $x$ for both sets $E_j \cap E$ and $E_j \cup E$, and therefore the measure-theoretic outer normals are the same at $x$ for both sets. The common orientation $\mathcal{H}^n$-a.e.~gives the equality
\[\mathbb{M} \big(\partial \llbracket E_j \cap E \rrbracket\big) + \mathbb{M} \big(\partial \llbracket E_j \cup E \rrbracket\big) = \mathbb{M} \big( \partial \llbracket E_j \cap E \rrbracket + \partial \llbracket E_j \cup E \rrbracket \big),\]
and therefore
\[\mathbb{M} \big(\partial \llbracket E_j \cap E \rrbracket\big) + \mathbb{M} \big(\partial \llbracket E_j \cup E \rrbracket\big) = \mathbb{M} \big(\partial \llbracket E_j \rrbracket + \partial \llbracket E \rrbracket\big) \leq \mathbb{M} \big(\partial \llbracket E_j \rrbracket \big)+ \mathbb{M} \big(\partial \llbracket E \rrbracket\big).\]
Noting that $|E_j \cap E| + |E_j \cup E| = |E_j| + |E|$, we conclude that 
\[J_\lambda(E_j \cap E) + J_\lambda(E_j \cup E) \leq J_\lambda(E_j) + J_\lambda(E).\]
On the other hand, since $\phi_{t_j}(E) \subset E$ and $E_j$ agrees with $\phi_{t_j}(E)$ in $B_2 \setminus B_1$ we conclude that $(E_j \cap E)\cap (B_2 \setminus B_1) = E_j\cap (B_2 \setminus B_1)$ and $(E_j \cup E)\cap (B_2 \setminus B_1) = E \cap (B_2 \setminus B_1)$ thus the minimising properties of $E_j$ and $E$ imply respectively that 
\[J_\lambda(E_j \cap E) \geq J_\lambda(E_j), \, \hspace{2mm} J_\lambda(E_j \cup E) \geq J_\lambda(E).\]
Combining the inequalities obtained, we find that equalities must hold throughout, and therefore $E_j \cup E$ is a minimiser of $J_\lambda$ (among sets with finite perimeter that coincide with $E$ in $B_2 \setminus \overline{B_1}$), so that the uniqueness of $E$ gives $E_j \cup E = E$, that is, $E_j\subset E$. \footnote{We point out that the conclusion $E_j \subset E$ would follow also without the uniqueness assumption on $E$, by exploiting interior regularity for the minimiser $E_j \cup E$ in $B_1$ to conclude that $\partial^* E_j$ and $\partial^* E$ cannot intersect transversely on their regular parts, and by then applying the maximum principle and unique continuation to exclude tangential intersections.}

\medskip

\emph{Step 3}. We conclude the proof of Theorem \ref{app thm} by showing that, given any $r<1$, the sequence $\partial E_j \cap B_r$ is smooth for large $j$ (depending on $r$). To that end we will use the Hardt--Simon foliation provided by \cite[Theorem 2.1]{HS}. First note that as a consequence of Allard's interior regularity theorem, and of the smoothness of $\partial E$ away from the origin, we must then have that, for any $r<1$ and $\sigma \in (0, r)$, there is $C^{1,\alpha}$ convergence of $\partial E_j$ to $\partial E$ in $B_r \setminus B_\sigma$. By elliptic regularity, the convergence is in fact smooth, and $\partial E_j \cap (B_r \setminus B_\sigma)$ is smooth for all sufficiently large $j$, depending on the choice of $\sigma, r$.

Let $\Sigma_j$ denote the singular set of $\overline{\partial^* E_j}$ in $B_1$ (which is of dimension at most $n-7$). Let $r_0<1$ be fixed and let $p_j \in \Sigma_j \cap B_{r_0}$. In view of the previous conclusion, we must have $p_j \to 0$ as $j\to \infty$. Also we remark that, by Proposition \ref{Prop:sing_max_princ}, we must have $0\notin \overline{\partial^* E_j}$ for all $j$, so $p_j \neq 0$ for all sufficiently large $j$. We will dilate $E_j$ around $0$ by the homothety $\eta_j(x)=\frac{x}{|p_j|}$. Then $\tilde{E}_j = \eta_j(E_j)$ is a Caccioppoli set in $B_{\frac{1}{|p_j|}}$, in particular in $B_2$ for all sufficiently large $j$; moreover, the point $\tilde{p}_j = \frac{p_j}{|p_j|}$ is singular for $\overline{\partial^* \tilde{E}_j}$ and lies on $\partial B_1$. Upon extracting a subsequence that we do not relabel, we can assume that $\tilde{E}_j \to \Omega$ and $|\partial^* \tilde{E}_j|$ converge to the (stationary) integral varifold $|\partial^* \Omega|$ in $B_2$. The minimising property of $E_j$ with respect to $J_\lambda$ implies that $\Omega$ minimises perimeter in any compact set. Moreover, as $E_j \subset E$, we have $\Omega \subset E_0$, where $E_0$ is the blow up of $E$ at $0$ obtained from $\eta_j$. Then \cite[Theorem 2.1]{HS} (specifically, its final assertion) implies that either $\Omega=E_0$, or $\Omega$ belongs to the ``Hardt--Simon family'' of sets $G_s=\eta_{0,s}(G)$, where $\eta_{0,s}(\cdot)=\frac{\cdot}{s}$, $s>0$, and $G \subsetneq E_0$ has smooth minimising boundary. On the other hand, the presence of a sequence of singular points $p_j \in \partial B_1$ implies, by Allard's interior regularity theorem, that a subsequential limit $p \in \partial B_1$ of $p_j$ must occur with density $\geq 1+ \epsilon_0$ in $|\partial^* \Omega|$, contradicting the smoothness and unit density of $\partial E_0$ and of $\partial G_s$ (regardless of $s$) in a tubular neighbourhood of $\partial B_1$. The contradiction shows that $\Sigma_j \cap B_{r_0} = \emptyset$ for all sufficiently large $j$, so that $\partial E_j \cap B_{r_0}$ is a smooth hypersurface (for all sufficiently large $j$).

\end{proof}

\appendix

\section{Auxiliary results} \label{Appendix}
 
We give a proof of the following general property.
\begin{lem}
\label{lem:splitting_perimeter_sphere}
Let $D$ be a set with finite perimeter in $B_2$, then
\begin{equation}
\label{eq:diff_of_slices}
\partial \llbracket D \rrbracket \res \partial B_1 = \langle \llbracket D \rrbracket, |x|=1^-\rangle - \langle  \llbracket D \rrbracket, |x|=1^+\rangle
\end{equation}
and
\begin{equation}
 \label{eq:splitting_perimeter}
\emph{\text{Per}}_{B_2} D = \emph{\text{Per}}_{B_1} D + \emph{\text{Per}}_{B_2 \setminus \overline{B_1}}D + \mathbb{M}\big(\langle \llbracket D \rrbracket, |x|=1^+\rangle - \langle  \llbracket D \rrbracket, |x|=1^-\rangle\big).
\end{equation}
\end{lem}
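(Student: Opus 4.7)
The plan is to deduce both identities from the definition of the inner and outer slices, combined with the additivity of $\llbracket D\rrbracket$ across the $\mathcal{L}^{n+1}$-null set $\partial B_1$. The definition (given in the main text) of the outer slice reads
\[
\langle \llbracket D\rrbracket, |x|=1^+\rangle = -\partial\llbracket D\cap(B_2\setminus\overline{B_1})\rrbracket + (\partial\llbracket D\rrbracket)\res(B_2\setminus\overline{B_1}),
\]
and the analogous definition of the inner slice is
\[
\langle \llbracket D\rrbracket, |x|=1^-\rangle = \partial\llbracket D\cap B_1\rrbracket - (\partial\llbracket D\rrbracket)\res B_1.
\]
First I would observe that $\llbracket D\rrbracket = \llbracket D\cap B_1\rrbracket + \llbracket D\cap(B_2\setminus\overline{B_1})\rrbracket$ (since $\mathcal{L}^{n+1}(\partial B_1)=0$), and consequently $\partial\llbracket D\rrbracket = \partial\llbracket D\cap B_1\rrbracket + \partial\llbracket D\cap(B_2\setminus\overline{B_1})\rrbracket$.

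To prove \eqref{eq:diff_of_slices}, I would subtract the two slice definitions and use the above additivity: the terms $\partial\llbracket D\cap B_1\rrbracket + \partial\llbracket D\cap(B_2\setminus\overline{B_1})\rrbracket$ combine to $\partial\llbracket D\rrbracket$, while $-(\partial\llbracket D\rrbracket)\res B_1 - (\partial\llbracket D\rrbracket)\res(B_2\setminus\overline{B_1}) = -(\partial\llbracket D\rrbracket)\res(B_2\setminus\partial B_1)$. Summing, what remains is exactly $(\partial\llbracket D\rrbracket)\res\partial B_1$, which gives \eqref{eq:diff_of_slices}.

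To prove \eqref{eq:splitting_perimeter}, I would decompose the boundary current into three pieces with pairwise disjoint supports,
\[
\partial\llbracket D\rrbracket = (\partial\llbracket D\rrbracket)\res B_1 + (\partial\llbracket D\rrbracket)\res\partial B_1 + (\partial\llbracket D\rrbracket)\res(B_2\setminus\overline{B_1}),
\]
and use additivity of mass for currents with pairwise disjoint supports to get $\mathbb{M}(\partial\llbracket D\rrbracket) = \text{Per}_{B_1}D + \mathbb{M}\bigl((\partial\llbracket D\rrbracket)\res\partial B_1\bigr) + \text{Per}_{B_2\setminus\overline{B_1}}D$ (identifying the mass of $\partial\llbracket D\rrbracket$ in an open set with the perimeter of $D$ in that set). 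The middle term is identified, by \eqref{eq:diff_of_slices}, with $\mathbb{M}\bigl(\langle\llbracket D\rrbracket,|x|=1^-\rangle - \langle\llbracket D\rrbracket,|x|=1^+\rangle\bigr)$, which equals $\mathbb{M}\bigl(\langle\llbracket D\rrbracket,|x|=1^+\rangle - \langle\llbracket D\rrbracket,|x|=1^-\rangle\bigr)$.

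The only mildly delicate point is the justification of the mass additivity across the three pieces (which is immediate because the supports lie in the pairwise disjoint sets $B_1$, $\partial B_1$, $B_2\setminus\overline{B_1}$) and the observation that $\mathbb{M}\bigl((\partial\llbracket D\rrbracket)\res U\bigr) = \text{Per}_U D$ for $U$ open; both of these are standard properties of the current associated with a set of finite perimeter, so no genuine obstacle arises.
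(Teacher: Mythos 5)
Your proof is correct, but it takes a genuinely different route from the paper's. You prove \eqref{eq:diff_of_slices} by pure algebra: you take the defining formula for the outer slice given in the text, supply the analogous formula for the inner slice,
\[
\langle \llbracket D\rrbracket, |x|=1^-\rangle = \partial\llbracket D\cap B_1\rrbracket - (\partial\llbracket D\rrbracket)\res B_1,
\]
and subtract, using the decomposition $\llbracket D\rrbracket = \llbracket D\cap B_1\rrbracket + \llbracket D\cap(B_2\setminus\overline{B_1})\rrbracket$ together with $(\partial\llbracket D\rrbracket)\res B_1 + (\partial\llbracket D\rrbracket)\res(B_2\setminus\overline{B_1}) = (\partial\llbracket D\rrbracket)\res(B_2\setminus\partial B_1)$. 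The paper, by contrast, works at the level of test forms: it expresses $(\partial\llbracket D\rrbracket)\res\partial B_1$ and the two one-sided slices as limits of pairings against bump functions $\gamma_h, \gamma_h^+, \gamma_h^-$ and deduces \eqref{eq:diff_of_slices} from the identity $\gamma_h^+ + \gamma_h^- + \gamma_h = 1$. Your algebraic route is shorter and stays at the level of current identities, but it rests on knowing the explicit boundary formulas for both one-sided slices, whereas the paper's argument essentially rederives those formulas from the limit definition along the way, making it self-contained. Both proofs then get \eqref{eq:splitting_perimeter} in the same way (additivity of mass over the disjoint Borel decomposition $B_1 \cup \partial B_1 \cup (B_2\setminus\overline{B_1})$ and $\text{Per}_U D = \mathbb{M}(\partial\llbracket D\rrbracket\res U)$ for $U$ open). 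One small point worth making explicit in your write-up: you should note that $D\cap B_1$ and $D\cap(B_2\setminus\overline{B_1})$ are themselves of finite perimeter in $B_2$ (true because $\partial B_1$ is a smooth compact hypersurface, so intersection with the ball adds at most $\mathcal H^n(\partial B_1)$ to the perimeter); this is what makes $\partial\llbracket D\cap B_1\rrbracket$ a finite-mass current and the whole manipulation meaningful.
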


\begin{proof}
To check this, we begin by recalling that for an open set $U \subset B_2$, one has $\text{Per}_U D = \mathbb{M}\big( \partial \llbracket D \rrbracket \res U\big)$, and $\mathbb{M}\big( \partial \llbracket D \rrbracket \big) = \mathbb{M}\big( \partial \llbracket D \rrbracket \res B_1 \big) + \mathbb{M}\big( \partial \llbracket D \rrbracket \res (B_2 \setminus \overline{B_1}) \big) + \mathbb{M}\big( \partial \llbracket D \rrbracket \res \partial B_1 \big)$. Therefore \eqref{eq:splitting_perimeter} follows from \eqref{eq:diff_of_slices}.

We recall that the restriction of $\partial \llbracket D \rrbracket$ to $\partial B_1$ is well-defined (since the current is normal) via the limit, for any $n$-form $\omega$ with compact support in $B_2$,
\[\big(\partial \llbracket D \rrbracket \res \partial B_1 \big)(\omega) = \lim_{h \to 0} \big(\partial \llbracket D \rrbracket\big)\big(\gamma_h(|x|-1) \,\omega \big),\]
where $\gamma_h:(-\infty,\infty) \to \mathbb{R}$ is $C^1$, is identically $1$ on $(-h,h)$, vanishes on $(-\infty,-2h)\cup (2h, \infty)$, and $\gamma^\prime\in \big[-\frac{2}{h}, 0 \big]$ on $(0,\infty)$ and $\gamma^\prime\in \big[ 0,\frac{2}{h}\big]$ on $(-\infty, 0)$. Then
\begin{equation}
\begin{aligned}
\label{eq:mass_restr_sphere}
\big(\partial \llbracket D \rrbracket \res \partial B_1 \big)(\omega) =  \lim_{h \to 0} \big( \llbracket D \rrbracket\big)\big(\gamma_h^\prime(|x|-1) d|x|\wedge\omega \big) + \lim_{h \to 0} \big( \llbracket D \rrbracket\big)\big(\underbrace{\gamma_h(|x|-1) \,d \omega}_{\to 0 \text{ as } h\to 0} \big)\\
= \lim_{h \to 0} \big( \llbracket D \rrbracket\big)\big(\gamma_h^\prime(|x|-1) d|x|\wedge\omega \big) .
\end{aligned}
\end{equation}
On the other hand, let $\gamma_h^+:(-\infty,\infty) \to \mathbb{R}$ be $C^1$, identically $0$ on $(-\infty,0)$, and equal to $1-\gamma_h$ on $[0, \infty)$. Let $\gamma_h^-:(-\infty,\infty) \to \mathbb{R}$ be defined by $\gamma_h^+(s) = \gamma_h^-(-s)$. Note that $\gamma_h^+ + \gamma_h^- + \gamma_h =1$. Then
\[\langle \llbracket D \rrbracket, |x|=1^+\rangle (\omega) =  -\partial  \Big(\llbracket D\rrbracket \res \{|x|>1\}\Big) (\omega) + \Big( \partial \llbracket D \rrbracket  \res \{|x|>1\} \Big) (\omega)=\]
\[-\lim_{h\to 0}\llbracket D \rrbracket (\gamma_h^+(|x|-1) d \omega) + \lim_{h\to 0}\llbracket D \rrbracket (d(\gamma_h^+(|x|-1) \omega)) =  \lim_{h\to 0}\llbracket D \rrbracket \big((\gamma_h^+)^\prime(|x|-1) d|x| \wedge \omega)\big) \]
and similarly
\[\langle \llbracket D \rrbracket, |x|=1^-\rangle (\omega) = - \lim_{h\to 0}\llbracket D \rrbracket \big((\gamma_h^-)^\prime(|x|-1) d|x| \wedge \omega)\big). \]
Therefore
\[\Big(\langle \llbracket D \rrbracket, |x|=1^+\rangle - \langle \llbracket D \rrbracket, |x|=1^-\rangle \Big) (\omega) =-\lim_{h\to 0}\llbracket D \rrbracket \big((\gamma_h)^\prime(|x|-1) d|x| \wedge \omega)\big), \]
which, jointly with \eqref{eq:mass_restr_sphere}, gives \eqref{eq:diff_of_slices}.
\end{proof}

We provide the details regarding the positive solutions to the linear elliptic PDE $L_C f=0$, which is crucial in the proof of Proposition \ref{Prop:sing_max_princ}. 

\begin{lem}\label{lem: rep of J-F} Let $C$ be a regular stable minimal $n$-cone in $\R ^{n+1}$. Then every positive solution of $L_C f =0$ is of the form 
\begin{equation*}
f(r \omega) = \bigg ( \dfrac{c_1^+}{r^{\gamma^+}} + \dfrac{c_1^-}{r^{\gamma^-}} \bigg)\phi_1(\omega),
\end{equation*}
where $\phi_1>0$ is the first eigenfunction of $L_\Sigma$, and $c_1^+ , c_1^-$ are non-negative constants.  
\end{lem}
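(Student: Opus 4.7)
The approach is separation of variables in spherical coordinates $(r,\omega) \in (0,\infty)\times \Sigma$, followed by an asymptotic comparison that uses the positivity of $f$ to eliminate every eigenfunction mode on $\Sigma$ except the first.

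Fix an $L^2(\Sigma)$ orthonormal basis $\{\phi_k\}_{k \geq 1}$ of eigenfunctions of $-L_\Sigma$ with eigenvalues $\lambda_1 < \lambda_2 \leq \ldots$; the ground state $\phi_1$ may be chosen strictly positive on $\Sigma$, and compactness of $\Sigma$ gives $\phi_1 \geq \varepsilon_0 > 0$. Set
\[a_k(r) := \int_\Sigma f(r\omega)\phi_k(\omega)\, d\mathcal{H}^{n-1}(\omega),\]
so that elliptic regularity for $L_C f = 0$ ensures each $a_k$ is smooth in $r > 0$. Testing $L_C f = 0$ against $\phi_k$, using the expression \eqref{L_C in spherical} together with self-adjointness of $L_\Sigma$, gives the Euler ODE
\[r^2 a_k''(r) + (n-1)\, r\, a_k'(r) - \lambda_k\, a_k(r) = 0.\]
Its characteristic roots are $-\gamma_k^{\pm}$, where $\gamma_k^{\pm} = \tfrac{n-2}{2} \pm \sqrt{\tfrac{(n-2)^2}{4} + \lambda_k}$; these are real because the stability assumption gives $(n-2)^2/4 + \lambda_1 \geq 0$ and $\lambda_k \geq \lambda_1$. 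Hence
\[a_k(r) = \alpha_k r^{-\gamma_k^+} + \beta_k r^{-\gamma_k^-}.\]

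The central step is to prove $\alpha_k = \beta_k = 0$ for every $k \geq 2$. From $\phi_1 \geq \varepsilon_0$ and boundedness of $\phi_k$ on $\Sigma$, there exists $M_k$ with $|\phi_k| \leq M_k \phi_1$ pointwise; combined with $f > 0$ this yields the crucial comparison
\[|a_k(r)| \leq M_k \int_\Sigma f(r\omega)\phi_1(\omega)\,d\mathcal{H}^{n-1}(\omega) = M_k\, a_1(r) \qquad\text{for every } r > 0.\]
Simplicity of $\lambda_1$ forces $\lambda_k > \lambda_1$ for $k \geq 2$, hence $\gamma_k^+ > \gamma^+$ and $\gamma_k^- < \gamma^-$. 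Since $\gamma^+ \geq \gamma^- \geq 0$, the explicit formula $a_1(r) = \alpha_1 r^{-\gamma^+} + \beta_1 r^{-\gamma^-}$ gives the uniform estimates $a_1(r) \leq (|\alpha_1|+|\beta_1|)\,r^{-\gamma^+}$ for $r \leq 1$ and $a_1(r) \leq (|\alpha_1|+|\beta_1|)\,r^{-\gamma^-}$ for $r \geq 1$. As $r \to 0$, the leading term of $a_k$ is $\alpha_k r^{-\gamma_k^+}$, and the comparison forces $|\alpha_k| \leq C\, r^{\gamma_k^+ - \gamma^+} \to 0$, so $\alpha_k = 0$. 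Symmetrically, as $r \to \infty$, the leading term is $\beta_k r^{-\gamma_k^-}$ and $|\beta_k| \leq C\, r^{\gamma_k^- - \gamma^-} \to 0$ forces $\beta_k = 0$.

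Setting $c_1 := \alpha_1$, $c_2 := \beta_1$, the expansion collapses to $f(r\omega) = (c_1 r^{-\gamma^+} + c_2 r^{-\gamma^-})\phi_1(\omega)$. Positivity of $f$ together with $\phi_1 > 0$ then forces $c_1, c_2 \geq 0$ by inspecting the dominant behaviour at $r \to 0$ and $r \to \infty$ respectively (using $\gamma^+ > \gamma^- \geq 0$ in the non-hyperplane case covered by the remark preceding the lemma). The main obstacle is the asymptotic comparison step: one must verify that the uniform bounds on $a_1$ hold even in the degenerate subcases where $\alpha_1$ or $\beta_1$ vanishes and the leading asymptotic of $a_1$ changes, but the bounds derived above are designed to be valid without any sign assumption on $\alpha_1, \beta_1$ and hence cover every case.
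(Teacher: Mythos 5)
Your proof is correct, and it takes a genuinely different route from the paper's. The paper establishes the vanishing of the higher Fourier modes $a_j$ ($j\geq 2$) by applying the Harnack inequality for $L_C$ on the scaled annuli $K_s=C\cap(\overline{B_{2s}}\setminus B_{s/2})$, deriving from it a two-sided $L^2(K_s)$ estimate, and then playing the resulting inequality off the orthogonality of the $\phi_j$ by sending $s\to 0$ and $s\to\infty$. Your argument replaces the entire Harnack/$L^2$ machinery with the elementary pointwise comparison $|\phi_k|\leq M_k\phi_1$ on the compact link $\Sigma$ (available because the ground state $\phi_1$ is bounded away from zero), which together with $f>0$ yields the clean coefficient bound $|a_k(r)|\leq M_k\,a_1(r)$ for every $r>0$; the conclusion $\alpha_k=\beta_k=0$ then follows by comparing the strictly faster growth rates $r^{-\gamma_k^+}$ and $r^{-\gamma_k^-}$ (as $r\to 0$ and $r\to\infty$ respectively, using $\lambda_k>\lambda_1$ for $k\geq 2$) against the slower ones of $a_1$. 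Your approach is shorter and more self-contained — it uses only positivity of $f$, positivity and simplicity of the first eigenvalue, and the explicit form of the Euler ODE — while the paper's route invokes Harnack and an $L^2$ comparison that, though robust, does more work than the problem requires here. Both proofs share the same tacit assumption that $\gamma^+>\gamma^-$ (so the Euler ODE has distinct real roots for every mode, including $j=1$); neither addresses the marginally stable case $\lambda_1=-(n-2)^2/4$, but since the lemma's statement itself excludes it, this is not a defect of your argument relative to the paper.
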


\begin{proof}
We assume first that the cone is strictly stable thus $\gamma^+>\gamma^-$. Consider the eigenvalues of the operator $-L_\Sigma$, 
\[\lambda_1 < \lambda_2 \leq \lambda_3 \dots \to \infty\] and
let $(\phi_j)$ be an orthonormal basis of $L^2(\Sigma)$ such that $\phi_j$ is an eigenfunction of $\lambda_j.$ Recall that $\phi_1 >0$ and $\lambda_1$ is a simple eigenvalue. 

For any $r>0$ the function $f(r, \cdot)$ (on $\Sigma$) is of the form $\sum_{j=1}^\infty a_j(r) \phi_j (\omega).$ Thus in order to solve $L_C f = 0$ we write $L_C$ in spherical coordinates and from (\ref{L_C in spherical}) we get, after solving the corresponding ODE for $a_j$, that $a_j(r) = c_j ^+ r^{-\gamma_j^+} + c_j^- r^{-\gamma_j^-}$, where $\gamma_j^\pm = \dfrac{n-2}{2} \pm \sqrt{\dfrac{(n-2)^2}{4}  +\lambda_j}$ and $c_j^\pm$ are constants. Thus
 \[f(r \omega) = \sum_{j=1}^\infty c_j^\pm r^{-\gamma_j ^\pm}\phi_j(\omega).\]
\medskip

We want to prove that $c_j^\pm = 0$ for all $j \geq 2.$ Since $L_C f = 0 $ and $f > 0$ from Harnack's inequality on $K_1 = C \cap (\overline {B_2} \setminus B_{\frac{1}{2}})$, Corollary 8.21 of \cite{GT}, we have that $\displaystyle \sup_{K_1} f \leq C_{K_1} \inf_{K_1} f, $ where $C_{K_1}$ is a constant that depends on $K_1$ and the operator $L_C$. Let now $K_s = C \cap (\overline{B_{2s} }\setminus B_{s/2})$, for some $s>0$ to be fixed later. Notice that if we rescale $f_s(x) = f(s x)$ then the scale invariance of the operator $L_C$ implies that 

\begin{equation*}
  \sup_{K_s} f \leq C_{K_1} \inf_{K_s} f.
 \end{equation*}

We want to evaluate the $L^2$-norm of $f$ on $K_s$ with respect to the cone metric $g_C = dr^2 + r^2 g_\Sigma.$  First note that 
\begin{equation*}
\begin{aligned}
    \displaystyle||f||_{L^2(K_s)} \leq  (\mathcal{H}^n(C \cap K_1) s^n\big)^{1/2} \sup_{K_s} f = C_{(K_1, n , \Sigma)}s^{n/2}\sup_{K_s} f,
 \end{aligned}
\end{equation*}
where $C_{(K_1, n, \Sigma)}$ denotes a constant that depends on $K_1, n, \Sigma$ that may vary from line to line.  
On the other hand, since $\phi_j$ is an orthonormal basis of $L^2(\Sigma)$, we have 
\begin{equation*}
\begin{aligned}
    ||f||_{L^2(K_s)} &= \bigg( \int_{s/2}^{2s} \sum_{j=1}^{\infty} (c_j ^\pm)^2 r^{-2\gamma_j^\pm}r^{n-1}dr \bigg)^{1/2} \\
    &= \bigg(\sum_{j=1}^{\infty} (c_j^\pm)^2 s ^{n-2\gamma_j ^\pm} \bigg ( \dfrac{2^{n-2\gamma_j^\pm} - 2^{2 \gamma_j^\pm - n}}{n-2\gamma_j^\pm} \bigg) \bigg)^{1/2},
\end{aligned}
\end{equation*}
and since $\dfrac{2^x - 2^{-x}}{x} \geq 1$ for any $x \in \mathbb R \setminus \{0\}$ we conclude that 
\[||f||_{L^2(K_s)} \geq s^{n/2} \bigg(\sum_{j=1}^{\infty} (c_j^\pm)^2s^{-2\gamma_j^\pm}\bigg)^{1/2}.\] 
The three inequalities thus give
 \begin{equation*}
 C_{(K_1, n, \Sigma)} \bigg(\sum_{j=1}^\infty (c_j ^\pm)^2 s^{-2\gamma_j^\pm} \bigg)^{1/2} \leq \inf_{K_s} f \leq f(r, \omega), 
 \end{equation*}
for all $r \in [s/2, s]$ and $\omega \in \Sigma.$
Multiplying the latter with $\phi_1$, and integrating over $\Sigma$, we get 

\begin{equation*}
C_{(K_1, n , \Sigma)}\bigg (\sum_{j=1}^\infty(c_j ^\pm)^2 s^{-2\gamma_j^\pm}\bigg)^{1/2} \leq c_1^+  r^{-
\gamma_1^+}+ c_1^- r^{-\gamma_1^-},
\end{equation*}
for all $r \in [\frac{s}{2}, 2s].$ Thus we may take $r = s$ and get that 
\begin{equation}
\begin{aligned}
\label{eq: main ineq}
C_{(K_1, n , \Sigma)}\bigg (\sum_{j=1}^\infty(c_j ^\pm)^2 s^{-2\gamma_j^\pm}\bigg)^{1/2} \leq c_1^+  s^{-
\gamma_1^+}+ c_1^- s^{-\gamma_1^-}.
\end{aligned}
\end{equation}
Multiplying now \eqref{eq: main ineq} by $s^{\gamma_1^+}$ we have that 
\begin{equation*}
    C_{(K_1, n , \Sigma)}\bigg (\sum_{j=1}^\infty(c_j ^\pm)^2 s^{2\gamma_1^+ -2\gamma_j^\pm}\bigg)^{1/2} \leq c_1^+ + c_1^- s^{\gamma_1^+-\gamma_1^-}. 
\end{equation*}

In order to prove that $c_j^+ =0$ for all $j \geq 2$ first note that $\sum_{j=2}^\infty (c_j^+)^2 < \infty$ (by Parseval's identity it is bounded by $||f||_{L^2(\Sigma)}$), and recall that $\gamma_j^-\leq \gamma_2^- < \gamma_1^- < \gamma_1 ^+ < \gamma_2 ^+ \leq  \gamma_j ^+$ for all $j\geq 2$. Thus for any $E >0$ there exists $s_0>0$ such that $s \leq s_0 \Rightarrow s^{2\gamma_1^+ - 2 \gamma_j^+} > E^2$ for every $j$, and moreover $s^{\gamma_1^+ - \gamma_1^-} < \frac{1}{|c_1^-|}$ thus we obtain
\[E^2 \sum_{j=2}^\infty (c_j^+)^2 \leq C_{(K_1, n, \Sigma)} (c_1^+ + 1 ) ^2, \] 
which gives a contradiction for sufficiently large $E$ unless $c_j ^+= 0$ for all $j \geq 2.$ If we instead multiply \eqref{eq: main ineq} by $s^{\gamma_1^-}$ and choose $s$ sufficiently large a similar argument leads to a contradiction unless $c_j^- = 0$ for all $j \geq 2.$ 

It remains to show that $c_1^+, c_1^- \geq 0$. Assume for the contrary that $c_1^+ <0$ then 
\[r^{\gamma^+}f = c_1^+ \phi_1 + c_1^- r ^{\gamma^+ - \gamma^-}\phi_1, \]
and letting $r \to 0$ we get a contradiction. A similar argument gives $c_1^- \geq 0.$ 

In case the cone is not strictly stable, thus $\gamma^+=\gamma^- = \frac{n-2}{2} $, then the expression of the function $f$ is given by 
\[f(r\omega) = c_1^+ r^{-\gamma}\phi_1(\omega) + c_1^- \log r \:\phi_1(\omega) + \sum_{j=2}^\infty c_j^{\pm}r^{-\gamma_j^\pm}\phi_j (\omega) \]
and repeating the same computations as above we will get that $c_1^- = c_j^\pm =0$, for all $j\geq 2$, thus $f(r\omega) = c_1^+ r^{-\gamma}\phi_1(\omega)$, where $c_1^+$ is a non-negative constant. This concludes the proof of Lemma \ref{lem: rep of J-F}.
\end{proof}

\begin{small}

\end{small}

\end{document}